\newcommand{\nmbr}[1]{\mathbb{#1}}
\newcommand{\bc}[1]{{\mathcal #1}}
\newcommand{\alg}[1]{{\mathbf #1}}
\newcommand{\dual}[2]{\left\langle #1,#2 \right\rangle}
\title{Laplacian preconditioning of elliptic PDEs: Localization of the eigenvalues of the discretized operator\thanks{Submitted to the editors September 10, 2018.
\funding{The work of Tom{\'a}{\v s} Gergelits and Zden{\v e}k Strako{\v s} has been supported by the Grant Agency of the Czech Republic under the contract No. 17-04150J. The work of Bj{\o}rn F. Nielsen has been supported by The Research Council of Norway, project number 239070. The work of Tom{\'a}{\v s} Gergelits has been also supported by the Charles University, project GA UK No. 172915.}}}
\author{
Tom{\'a}{\v s} Gergelits\footnotemark[2]\ \footnotemark[3]
\and Kent-Andr\'{e} Mardal\footnotemark[4]\ \footnotemark[5]
\and Bj{\O}rn Fredrik Nielsen\footnotemark[6]
\and Zden{\v e}k Strako{\v s}\footnotemark[3]
}
\begin{document}

\maketitle

\renewcommand{\thefootnote}{\fnsymbol{footnote}}
\footnotetext[2]{Institute of Computer Sciences, Czech Academy of Science, Prague, Czech Republic (\email{gergelits@cs.cas.cz}).}
\footnotetext[3]{Faculty of Mathematics and Physics, Charles University, Prague, Czech Republic (\email{gergelits@karlin.mff.cuni.cz}, \email{strakos@karlin.mff.cuni.cz}).}
\footnotetext[4]{Department of Mathematics, University of Oslo, 0316 Oslo, Norway 
  (\email{kent-and@math.uio.no}).}
\footnotetext[5]{Department of Numerical Analysis and Scientific Computing, Simula Research Laboratory, 1325 Lysaker, Norway  (\email{kent-and@simula.uio.no}).}
\footnotetext[6]{Faculty of Science and Technology, Norwegian University of Life Sciences, NO-1432 {\AA}s, Norway
  (\email{bjorn.f.nielsen@nmbu.no}).}

\renewcommand{\thefootnote}{\arabic{footnote}}

\begin{abstract}
  In the paper \textit{Preconditioning by inverting the {L}aplacian; an analysis of the eigenvalues. IMA Journal of Numerical Analysis 29, 1 (2009), 24--42}, Nielsen, Hackbusch and Tveito study the operator generated by using the inverse of the Laplacian as preconditioner for second order elliptic PDEs $\nabla \cdot (k(x) \nabla u)  = f$. They prove that the range of $k(x)$ is contained in the spectrum of the preconditioned operator, provided that $k$ is continuous. Their rigorous analysis only addresses mappings defined on infinite dimensional spaces, but the numerical experiments in the paper suggest that a similar property holds in the discrete case.
Motivated by this investigation, we analyze the eigenvalues of the matrix $\bf{L}^{-1}\bf{A}$, where $\bf{L}$ and ${\bf{A}}$ are the stiffness matrices associated with the Laplace operator and general second order elliptic operators, respectively. Without any assumption about the continuity of $k(x)$, we prove the existence of a one-to-one pairing between the eigenvalues of  $\bf{L}^{-1}\bf{A}$ and the intervals determined by the images under $k(x)$ of the supports of the FE nodal basis functions. As a consequence, we can show that the nodal values of $k(x)$ yield accurate approximations of the eigenvalues of $\bf{L}^{-1}\bf{A}$. Our theoretical results are illuminated by several numerical experiments.
\end{abstract}

\begin{keywords}
  Second order elliptic PDEs, preconditioning by the inverse Laplacian, eigenvalues of the discretized preconditioned problem, nodal values of the coefficient function, Hall's theorem, convergence of the conjugate gradient method
\end{keywords}

\begin{AMS}
  65F08, 65F15, 65N12, 35J99
\end{AMS}

\section{Introduction}
\label{introduction}
The classical analysis of Krylov subspace solvers for matrix problems with {Hermitian} matrices relies on their spectral properties; see, e.g.,  \cite{BAxe94,BHac94}. Typically one seeks a preconditioner which {yields parameter}  independent bounds for the extreme eigenvalues; see, e.g., \cite{FMP90,Hip06,Mar11,Gun14,MSB15} for a discussion of this issue in terms of {\em operator preconditioning}.
This approach has the advantage that only the largest and smallest eigenvalues {(in the }absolute sense if an indefinite problem is solved) must be studied, and the bounds for the required number of Krylov subspace iterations can become independent of the mesh size and other important parameters. This is certainly of great importance, but it does not automatically represent a solution to the challenge of identifying efficient preconditioning. Efficiency of the preconditioning in this approach requires that the convergence bounds based on a single number characteristics, such as the condition number, guarantee sufficient accuracy of the computed approximation to the solution within an acceptable number of iterations.

Since Krylov subspace methods are strongly nonlinear in the input data (matrix and the initial residual), more information about the spectrum is needed\footnote{Here we assume that the system matrix is Hermitian, otherwise the spectral information may not be descriptive for convergence of Krylov subspace methods; see \cite{GrePtaStr96,GreStr94}.}
in order to capture the actual convergence behavior with its desirable superlinear character.
This has been pointed out by several studies \cite{Axe86,Axe86b,Jen77,JenMal78,SluVor86,Nie13}, and the acceleration of the convergence of the method of conjugate gradients (CG) has been linked with the presence of large outlying eigenvalues and clustering of the eigenvalues. Since Krylov subspace methods for systems with Hermitian matrices use short recurrences, exact arithmetic considerations must be complemented with a thorough rounding error analysis, otherwise it can in practice be misleading or even completely useless. The deterioration of convergence due to rounding errors in the presence of large outlying eigenvalues has been reported, based on experiments, already in \cite{Lan52}; see also \cite{EngGiRuSt59}, \cite[p.~72]{Jen77}, the discussion in \cite[p.~559]{SluVor86} and the summary in \cite[Section~5.6.4, pp.~279--280]{LSB13}. 

In investigating the convergence behavior of Krylov subspace methods for Hermitian problems, we thus have to deal with two phenomena acting against each other. Large outlying eigenvalues (or well-separated clusters of large eigenvalues) can in theory, assuming exact arithmetic, be linked with acceleration of CG convergence. However, in practice, using finite precision computations, it can cause deterioration of the convergence rate. This intriguing situation has been fully understood thanks to the seminal work of Greenbaum \cite{Gre89} with the fundamental preceeding analysis of the Lanczos method by Paige \cite{Pai71,Pai80}; see also \cite{1992GreenStra,Str91,MeuStr06,Meu06} and the recent paper \cite{GerStr14} that addresses the question of validity of the CG composite convergence bounds based on the so-called effective condition number.
For general non-Hermitian matrices, spectral information may not be descriptive; see, e.g., \cite{GreStr94, GrePtaStr96} and \cite[Section~5.7]{LSB13}.

We will briefly outline the mathematical background behind the understanding of the CG convergence behavior. For Hermitian positive definite matrices (in infinite dimension, for self-adjoint, bounded, and coercive operators) CG can be associated with the Gauss-Christoffel quadrature of the Riemann-Stieltjes integral
\[\int \lambda^{-1}\,d\omega(\lambda);\]
see \cite{Gre89}, \cite[Section~14]{HesSti52}, \cite[Section~3.5 and Chapter~5]{LSB13}, \cite[Section~5.2 and Chapter~11]{MSB15}.
The nondecreasing and right continuous distribution function $\omega(\lambda)$ is given by the spectral decomposition of the given matrix $\alg{B}$ (operator) and the normalized initial residual $\alg{q}$,
\[\alg{B} = \sum \lambda_l\alg{v}_l\alg{v}_l^* \ \left(= \int \lambda\, d \bc{E}_{\lambda}\right),\]
where $\bc{E}_{\lambda}$ is the spectral function representing a family of projections,
\[
\alg{q}^*\alg{B}\alg{q} = \sum\lambda_l {|\alg{v}_l^*\alg{q}|}^2\equiv \sum \lambda_l\, \omega_l \ \left(= \int\lambda\, d\omega(\lambda)\right),
\]
$\omega_l = |\alg{v}_l^*\alg{q}|^2$, $(d\omega(\lambda) = \alg{q}^*d \bc{E}_{\lambda} \alg{q})$; see \cite[Chapter II, Section~7]{vNeu96} or \cite[Chapter III]{VorB65}. For more references on this topic, see \cite[Section~5.2]{MSB15}.
As a consequence, which has been observed in many experiments, preconditioning that leads to favorable distributions of the eigenvalues of the preconditioned (Hermitian) matrix can lead to much faster convergence than preconditioning that only focuses on {minimizing} the condition number. (As pointed out above, any analysis that aims at relevance to practical computations must also include effects of rounding errors).

Motivated by these facts and the results in \cite{Nie09}, the purpose of
this paper is to show that approximations of {\em all} the eigenvalues of
a classical generalized eigenvalue problem are readily available. 
More specifically, assuming that the function $k(x)$ is uniformly positive, bounded and measurable, we will study finite element (FE) discretizations of
\begin{equation}
\label{eq:A1}
\begin{split}
\nabla \cdot (k(x) \nabla u) &= \lambda \Delta u \quad \mbox{in } \Omega \subset \mathbb{R}^d, \\
u &= 0 \quad \mbox{on } \partial \Omega,
\end{split}
\end{equation}
$d=1, \, 2$ or $3$, which yields a system of linear equations in the form
\begin{equation}
\label{A2}
\alg{A} \alg{v} = \lambda \alg{L} \alg{v}.
\end{equation}

As mentioned above, mathematical properties of the continuous problem~\cref{eq:A1} are studied in \cite{Nie09}. In particular, the authors of that paper prove that\footnote{The spectrum of the operator $\mathcal{L}^{-1} \mathcal{A}$ on an infinite dimensional
normed linear space
is defined as
\[
\mathrm{sp}(\mathcal{L}^{-1} \mathcal{A})
\equiv \left\{ \lambda \in \mathbb{C}; \,  \mathcal{L}^{-1} \mathcal{A} - \lambda \mathcal{I} 
\mbox{
does not have a bounded inversion
} \right\}.
\]
}
\[
k(x) \in \mathrm{sp}(\mathcal{L}^{-1} \mathcal{A})
\]
for all $x \in \Omega$ at which {$k(x)$} is {\em continuous}, where
\begin{align}
\label{eq:intro:A}
& \mathcal{A}: H_0^1(\Omega) \mapsto H^{-1}(\Omega), \quad \langle \mathcal{A} u, v \rangle = \int_{\Omega} k \nabla u  \cdot  \nabla v, \quad u,v \in H_0^1(\Omega),  \\ 
& \mathcal{L}: H_0^1(\Omega) \mapsto H^{-1}(\Omega), \quad \langle \mathcal{L} u, v \rangle = \int_{\Omega} \nabla u  \cdot  \nabla v, \quad u,v \in H_0^1(\Omega). \label{eq:intro:L}
\end{align}
The authors also conjecture that the spectrum of the discretized preconditioned operator $\alg{L}^{-1} \alg{A}$ can be approximated by the nodal values of $k(x)$.
In the present text we show, without the continuity assumption on the coefficient function, how the 
function values of $k(x)$ {are related to the generalized spectrum of the discretized operators (matrices) in \cref{A2}}. Our main results state that:
\begin{itemize}
	\item There exists a (potentially non-unique) pairing of the eigenvalues of $\alg{L}^{-1} \alg{A}$ and the intervals determined by the images under $k(x)$ of the supports of the FE nodal basis functions; see \cref{th:theorem} in \cref{analysis}.
\item The function values of {$k(x)$} at the nodes of the {finite element} grid can be paired with the individual eigenvalues of the discrete preconditioned operator $\alg{L}^{-1} \alg{A}$. Furthermore, these functions values yield accurate approximations of the eigenvalues;
see \cref{th:taylor} in \cref{analysis}.
%
\end{itemize}

The text is organized as follows. Notation, assumptions and a motivating example are presented in \cref{notation}. \Cref{analysis} contains theoretical results.
The proof of the pairing in \cref{th:theorem} uses the classical Hall's theorem from the theory of bipartite graphs. \Cref{th:taylor} then follows as a simple consequence. The numerical experiments in \cref{experiments}  illustrate the results of our analysis. Moreover, using \cref{th:theorem}, the discussion at the end of \cref{experiments} explains the CG convergence behavior observed in the example presented in \cref{notation}. The text closes with concluding remarks in \cref{remarks}.

\section{{Notation and an introductory example}}
\label{notation}
We consider a self-adjoint second order elliptic PDE in the form
\begin{align}
\label{eq:problem}
-\nabla\cdot (k(x) \nabla u) &= f\quad\mbox{for}\  x\in\Omega, \\
u &= 0\quad\mbox{for}\  x\in\partial\Omega,\nonumber
\end{align}
and the corresponding generalized eigenvalue problem
\cref{eq:A1}
with the domain $\Omega\subset \nmbr{R}^d$, $d \in \{1,2,3\}$ and the given function $f\in L^2(\Omega)$. We assume that the real valued scalar function $k( x): \nmbr{R}^{d}\rightarrow \nmbr{R}$ is {measurable and bounded, i.e., $k(x) \in L^{\infty}(\Omega)$, and that it is} uniformly positive, i.e.,
\[
k(x)\geq\alpha>0,\quad x\in\Omega.
\]
Let $V\equiv H_0^1(\Omega)$ denote  the  Sobolev space of functions defined on $\Omega$ with zero trace at $\partial\Omega$ and  with the standard inner product. 
The weak formulations of the problems \cref{eq:problem,eq:A1} are to seek $u\in V$, respectively $u\in V$ and $\lambda\in \nmbr{R}$, such that
\begin{align}\label{eq:problem_inf}
\bc{A}u = f, \qquad\mbox{respectively}\qquad\bc{A}u = \lambda\bc{L}u
\end{align}
where
$\bc{A},\, \bc{L} : V \rightarrow V^{\#}$, $f \in V^{\#}$ are defined in \cref{eq:intro:A,eq:intro:L} and the function $f\in L^2(\Omega)$ is identified with the associated linear functional $f \in V^{\#}$ defined by
\begin{equation}
	\dual{f}{v} \equiv \int_{\Omega} fv\, .
\end{equation}
Discretization via the conforming finite-element method leads to the discrete operators
\[
\bc{A}_h,\, \bc{L}_h : V_h \rightarrow V_h^{\#}
\]
where the  finite dimensional subspace $V_h$ is  spanned by the polynomial discretization basis functions
$\phi_1,\ldots,\phi_N$ with the local supports 
\[
\bc{T}_i = \mbox{supp}(\phi_i), \, i = 1,\ldots,N.
\]
The matrix representations $\alg{A}_h$ and $\alg{L}_h$ {are} defined as
\begin{align}\label{eq:problem_dis:A}
	[\alg{A}_h]_{ij} &= \dual{\bc{A}_h\phi_j}{\phi_i} = \int_{\Omega} \nabla \phi_i\cdot k\nabla\phi_j, \\
	[\alg{L}_h]_{ij} &= \dual{\bc{L}_h\phi_j}{\phi_i} = \int_{\Omega} \nabla \phi_i\cdot \nabla \phi_j,\, \quad i,j=1,\ldots,N.
\label{eq:problem_dis:L}
\end{align}
In the text below we will, for the sake of simple notation, omit the subscript $h$ and write $\alg{A}\equiv\alg{A}_h$ and $\alg{L}\equiv\alg{L}_h$.

\begin{figure}[!ht]
	\centering
	\includegraphics[width=0.495\textwidth]{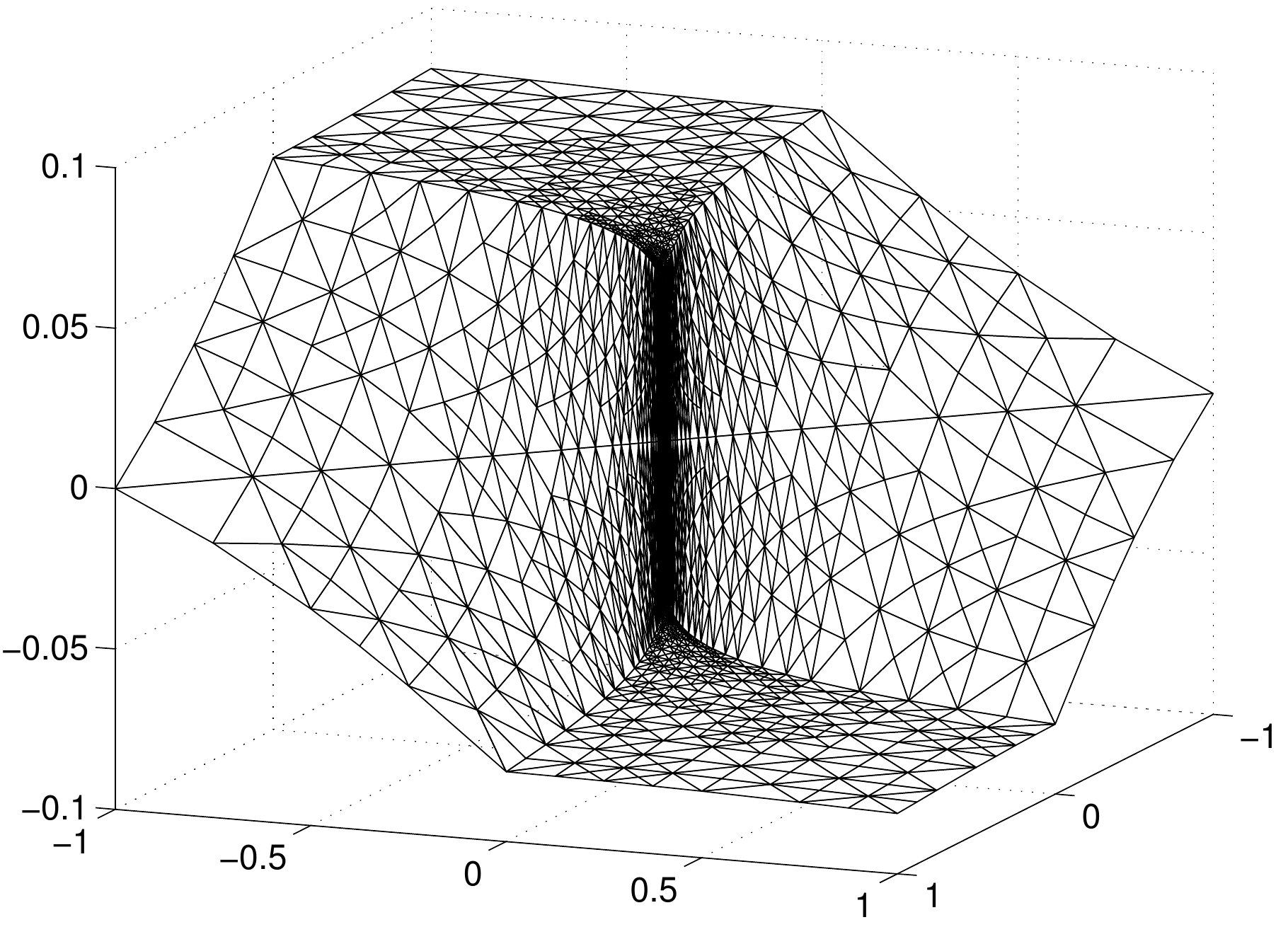}
	\includegraphics[width=0.495\textwidth]{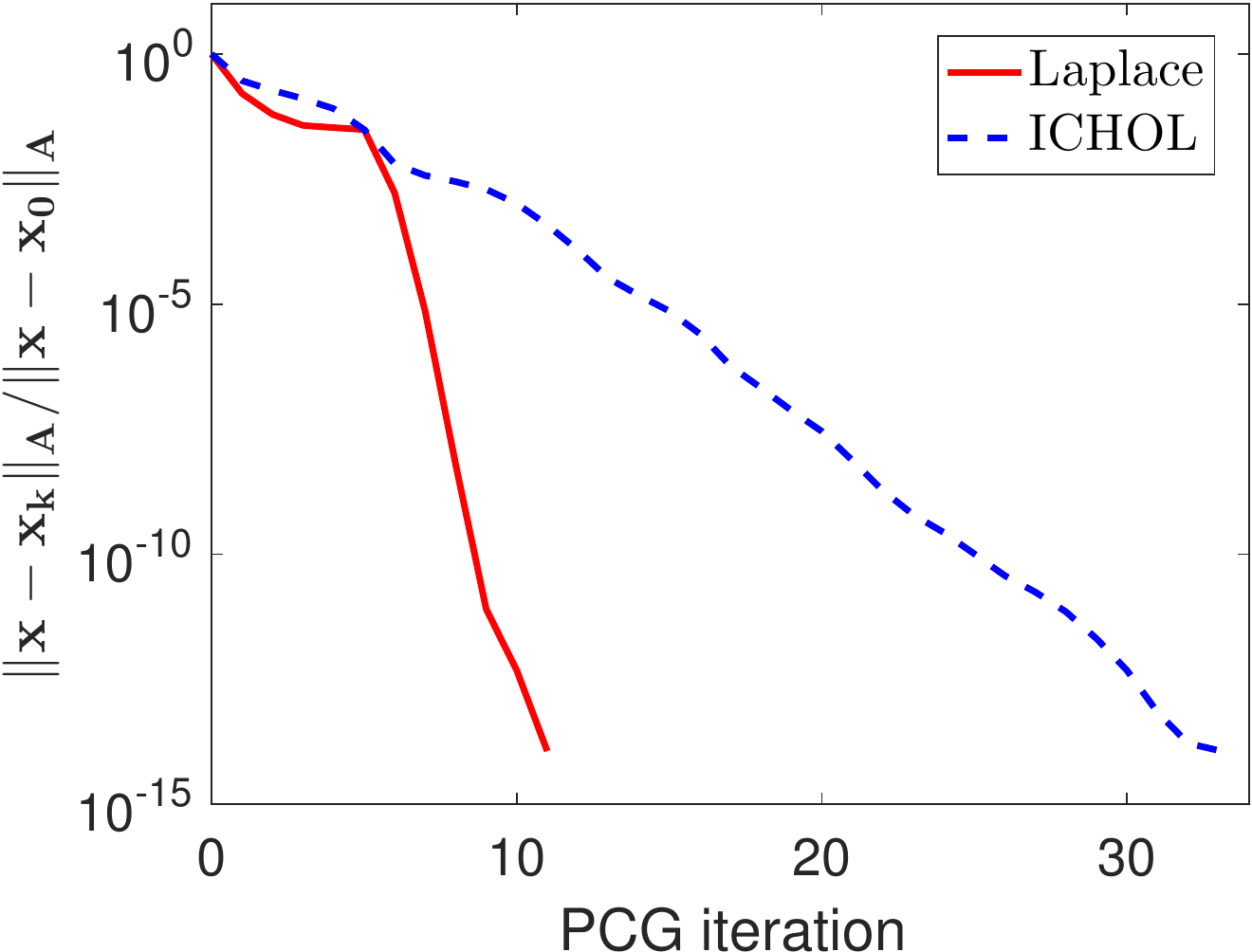}
	\caption{Left: The solution of the problem \cref{eq:motivation} {on the background of the linear FE triangulation}. Right: The relative energy norm of the {PCG} error as a function of the iteration steps. The Laplace {operator} preconditioning (solid line) is much more efficient than the incomplete Choleski preconditioning (dashed line), despite the fact that the condition numbers are $161.54$ and close to $16$, respectively. This can be explained by the differences in the associated distribution functions (see the end of \cref{experiments} below).}
	\label{fig:inhomogeneousIIsolution}
\end{figure}
\subsection*{An example}
The following example illustrates in detail the motivation {outlined} in section~1, i.e. that the condition number may be misleading in characterization of the convergence behavior of the CG method.  Consider the boundary value problem
\begin{eqnarray}
\label{eq:motivation}
	- \nabla \cdot (k(x) \nabla u) &=& 0 \quad \text{in} \ \Omega \,,\qquad u = u_D \quad \text{on} \ \partial \Omega\,,
\end{eqnarray}
where the domain $\Omega \equiv (-1,1) \times (-1,1)$ is divided into four subdomains $\Omega_i$, $i=1, \, 2, \, 3, \, 4$, corresponding to the axis quadrants numbered counterclockwise. Let $k(x)$ be piecewise constant on the individual subdomains $\Omega_i$, $k_1 = k_3 \approx 161.45$, $k_2 = k_4 = 1$. The Dirichlet boundary conditions are described in \cite[Section~5.3]{MorNocSie02}.

The numerical solution $u$ of this problem and the linear FE discretization, using the standard uniform triangulation, are shown in the left part of \cref{fig:inhomogeneousIIsolution}. The resulting algebraic {problem} $\alg{A}\alg{x}=\alg{b}$ is solved by the preconditioned conjugate gradient method (PCG).
In the right panel of \cref{fig:inhomogeneousIIsolution} we see the relative energy norm of the error as a function of iteration steps for the Laplace operator preconditioning (solid line) and for the {preconditioning using the} algebraic incomplete Choleski factorization  of the matrix $\alg{A}$ (ICHOL) with the drop-off tolerance $10^{-2}$ (dashed line) {{where} the problem {has} $N = 3969$ degrees of freedom}.
Despite the fact that the spectral condition number $\lambda_{max}/\lambda_{min}$ of the symmetrized preconditioned matrix for the Laplace operator preconditioning is an order of magnitude larger than for the  ICHOL preconditioning, close to $161$ and close to $16$, respectively, PCG with the Laplace operator preconditioning clearly demonstrates much faster convergence.
This is due to the differences in the distribution of the eigenvalues with the nonnegligible components of the initial residuals  in the direction of the associated eigenvectors and effects of rounding errors.

\begin{figure}[!ht]
	\centering
	\includegraphics[width=0.49\textwidth]{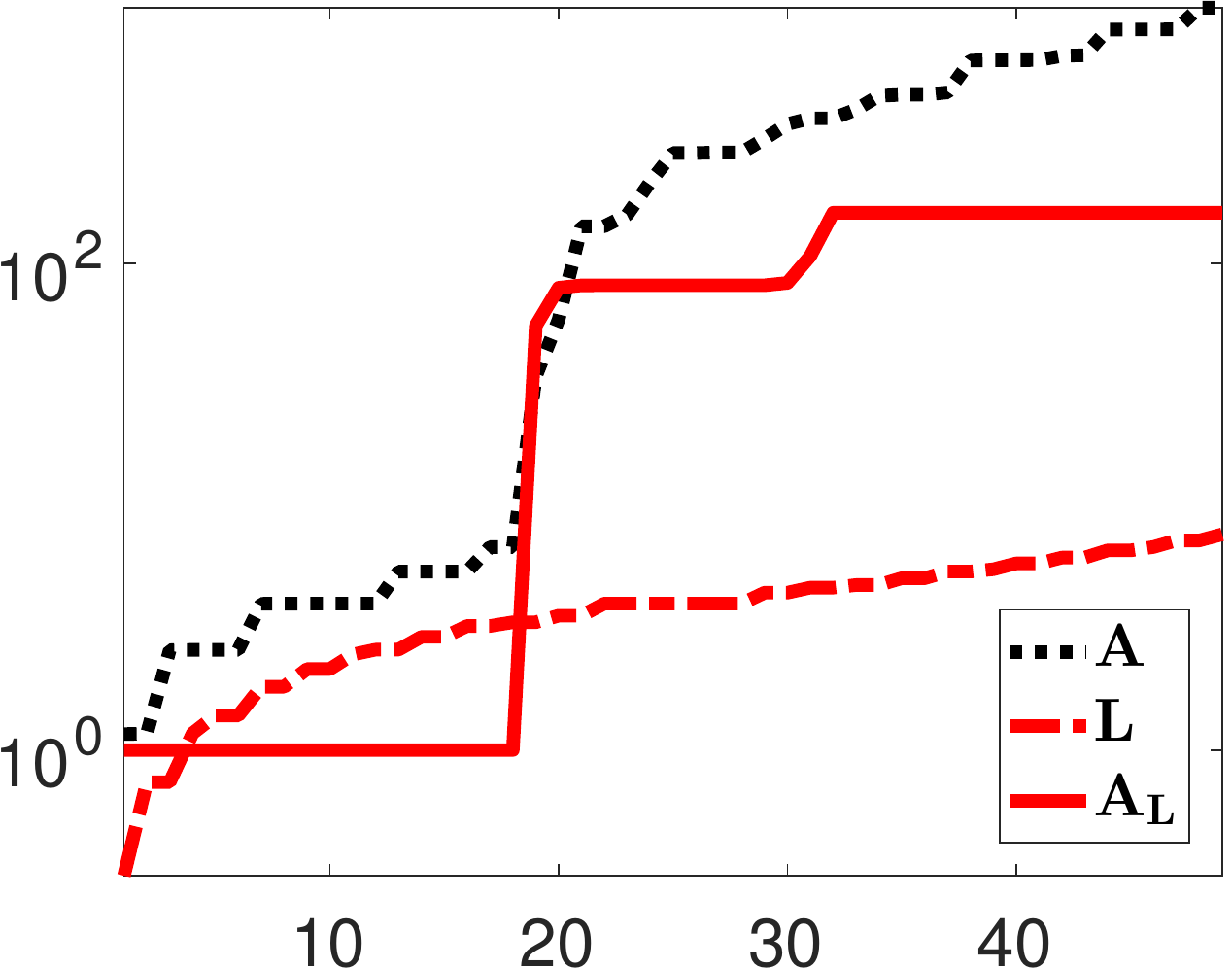}
	\includegraphics[width=0.5\textwidth]{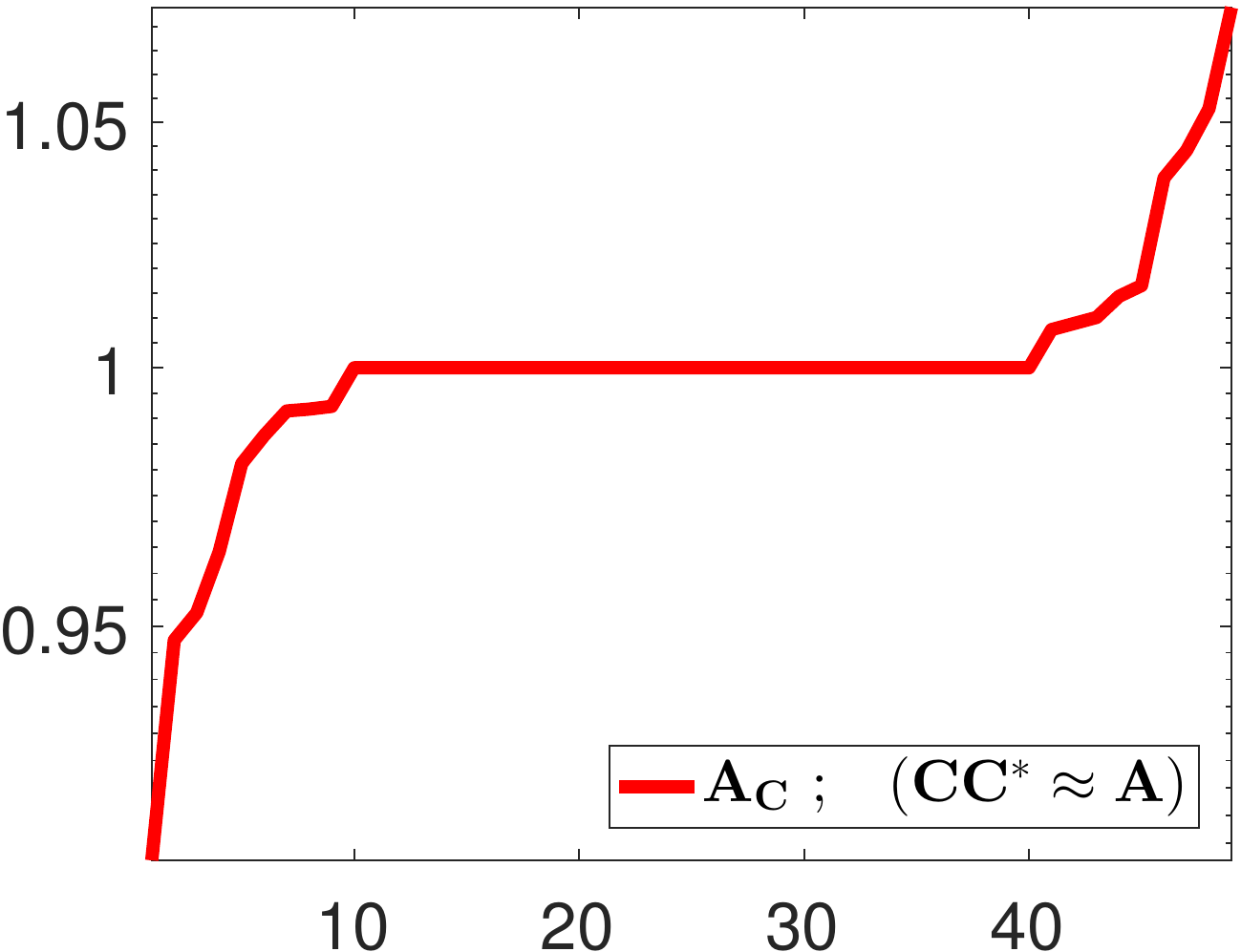}
	
	\hspace{0.005\textwidth}	
	\includegraphics[width=0.48\textwidth]{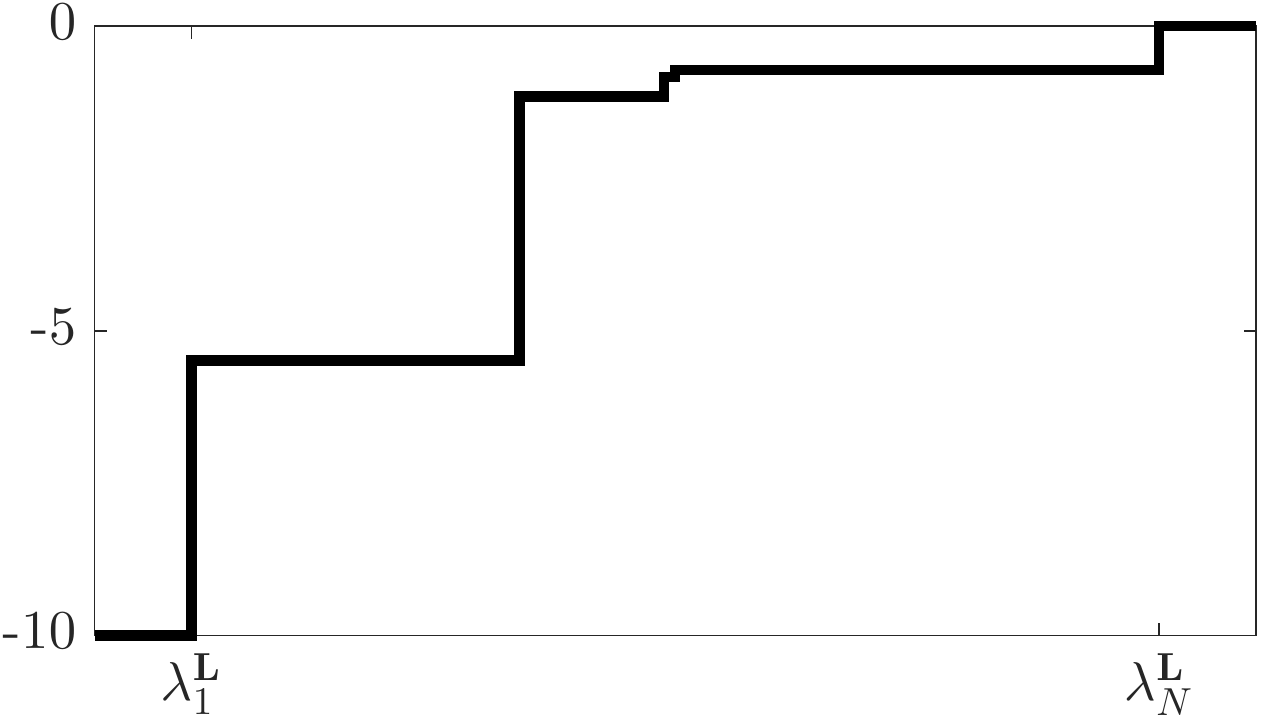}
	\hspace{0.005\textwidth}
	\includegraphics[width=0.48\textwidth]{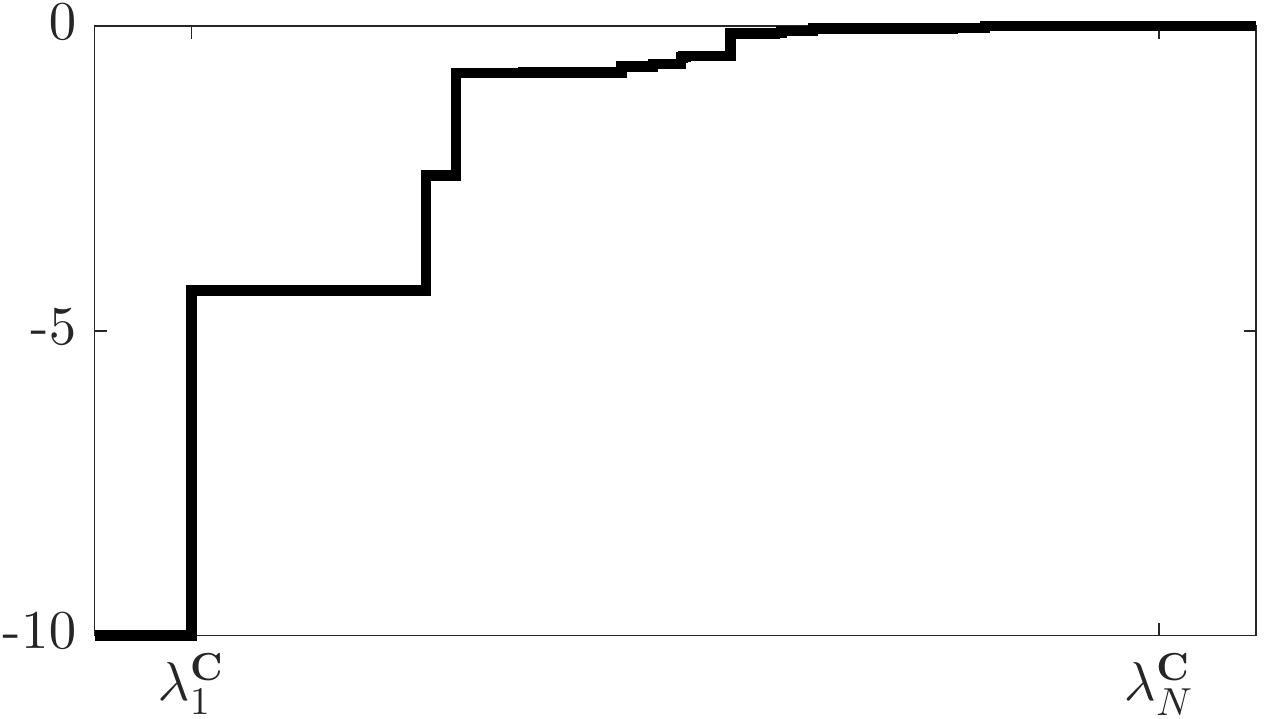}	
	\caption{Top: Comparison of the spectra of the  matrices $\alg{A}$, {$\alg{A}_{\alg{L}}$ and $\alg{A}_{\alg{C}}$}, $N = 49$ degrees of freedom. Due to a small drop-off tolerance, the eigenvalues of $\alg{A}$ and $\alg{C}\alg{C}^*$ are graphically indistinguishable. Therefore the right part only shows the eigenvalues of $\alg{A}_{\alg{C}}$ (using a different scale than the left part of the figure). Bottom: Comparison of the distribution functions $\omega^{\alg{L}}(\lambda)$ (left) and $\omega^{\alg{C}}(\lambda)$ (right) associated with the preconditioned problems. The vertical axes are in the logarithmic scale and $\lambda_1^{\alg{L}} = 1$, $\lambda_N^{\alg{L}} = 161.45$, $\lambda_1^{\alg{C}} = 0.91$, $\lambda_N^{\alg{C}} = 1.07$.}
	\label{fig:spectra}
\end{figure}

The spectra and distribution functions associated with the discretized preconditioned problems are given in \cref{fig:spectra} for $N = 49$ degrees of freedom and in \cref{fig:spectra_large} for $N = 3969$ degrees of freedom.
Here, $\alg{L} = \alg{L}^{1/2}\alg{L}^{1/2}$ is the matrix associated with the discretized Laplace operator and $\alg{C}\alg{C}^*\approx\alg{A}$ is the matrix resulting from ICHOL using the drop-off tolerance $10^{-2}$, with the eigenvalues and eigenvectors of the associated generalized eigenvalue problems (see \cref{A2})
\begin{align*}
\alg{A}\alg{v}^{\alg{L}}_i &= \lambda_i^{\alg{L}}\alg{L}\alg{v}^{\alg{L}}_i,\quad i=1,\ldots,N, \\
\alg{A}\alg{v}^{\alg{C}}_i &= \lambda_i^{\alg{C}}\alg{C}\alg{C}^*\alg{v}^{\alg{C}}_i,\quad i=1,\ldots,N.
\end{align*}
The weights of the distribution function $\omega^{\alg{L}}(\lambda)$, respectively, $\omega^{\alg{C}}(\lambda)$, associated with the eigenvalues $\lambda_i^{\alg{L}}$, respectively, $\lambda_i^{\alg{C}}$,  $i = 1\ldots,N$, related to the preconditioned algebraic systems
\begin{figure}[!ht]
	\centering
	\includegraphics[width=0.48\textwidth]{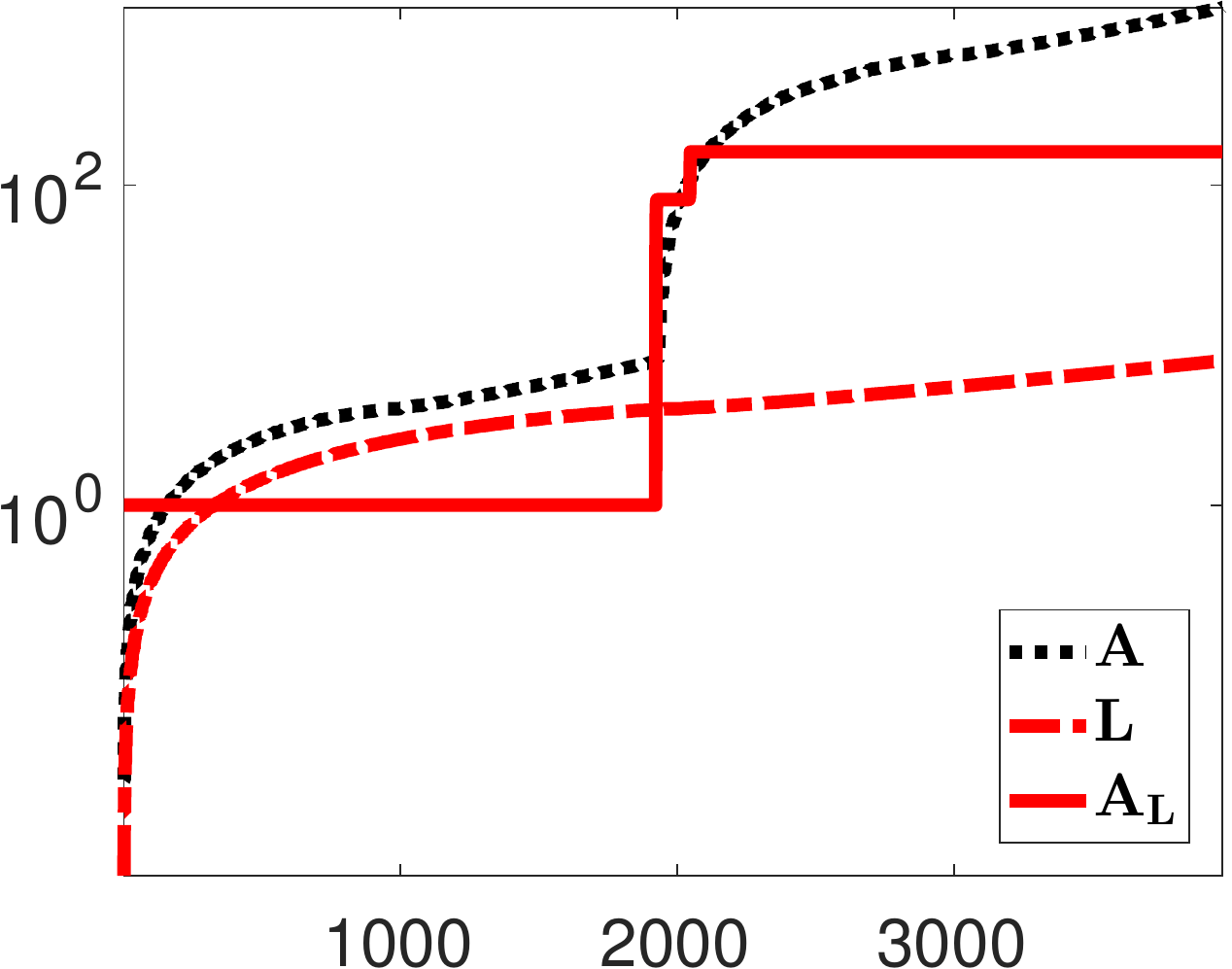}
	\includegraphics[width=0.48\textwidth]{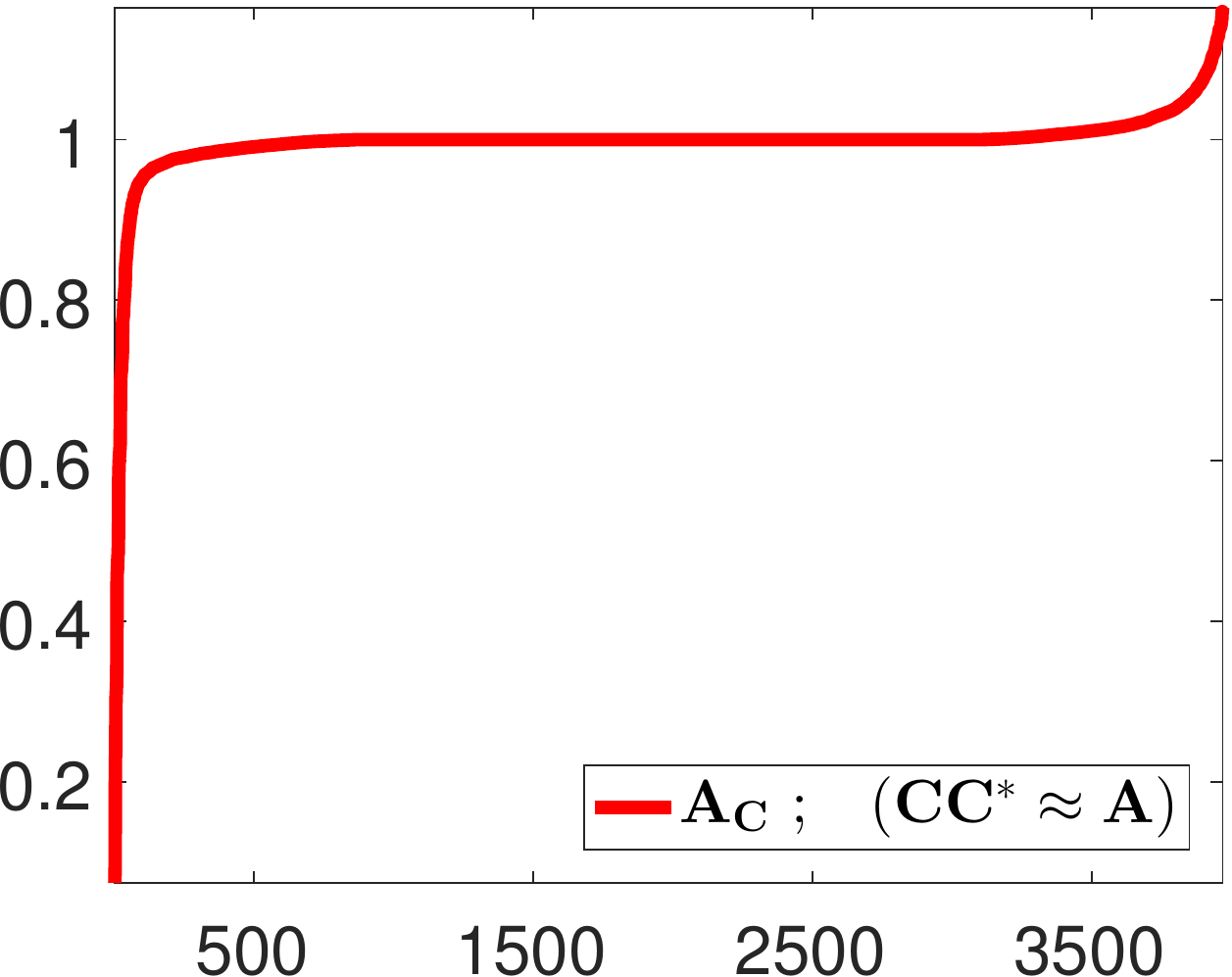}
	
	\includegraphics[width=0.48\textwidth]{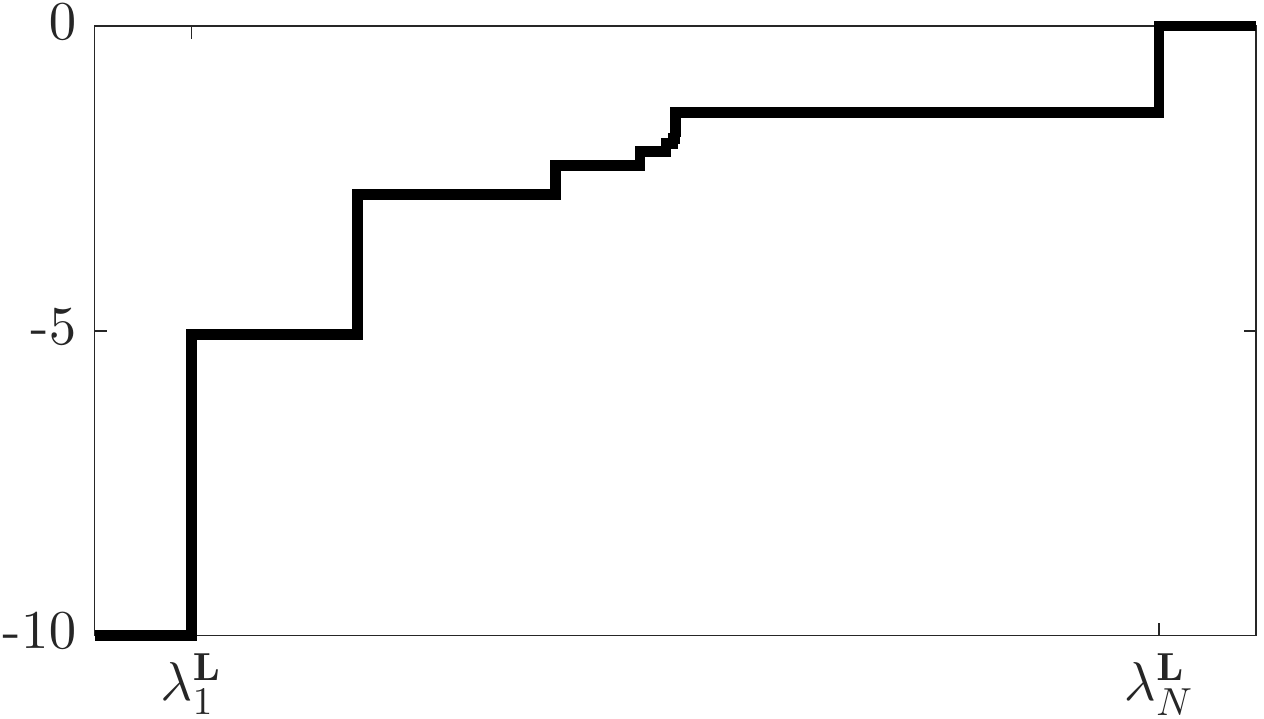}
	\includegraphics[width=0.48\textwidth]{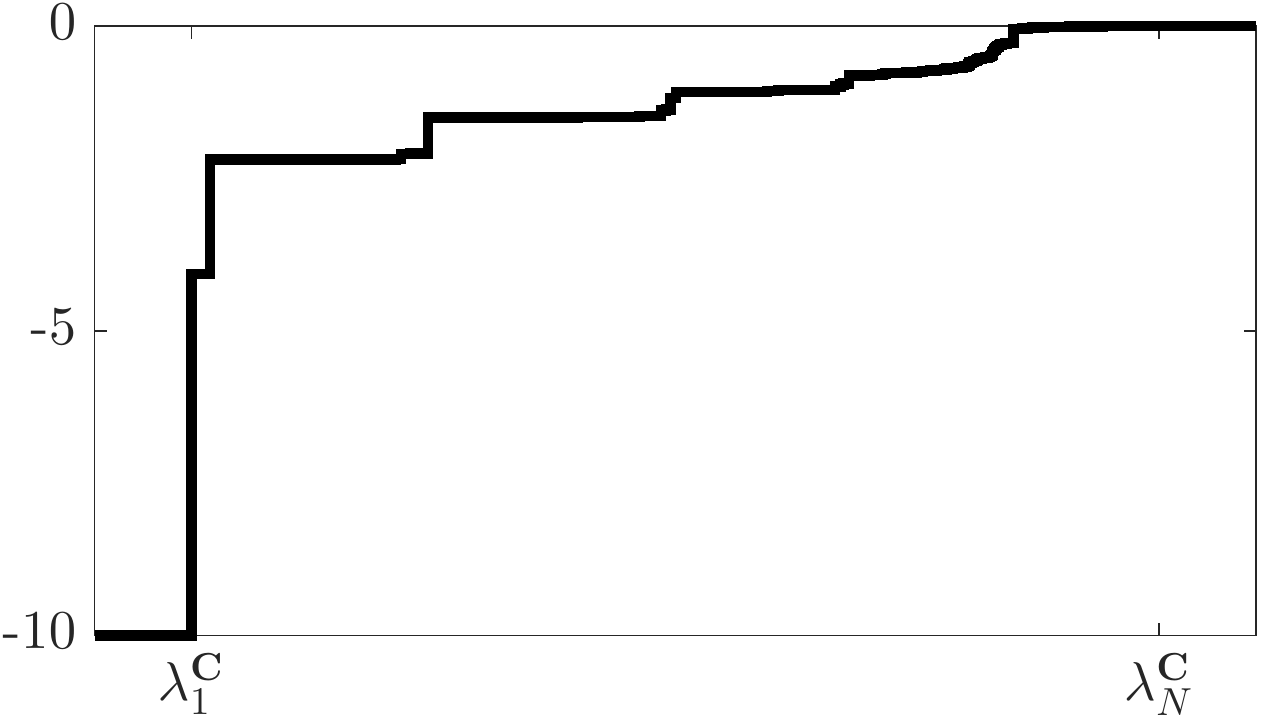}	
	
	\caption{Top: Comparison of the spectra of the  matrices $\alg{A}$, {$\alg{A}_{\alg{L}}$ and $\alg{A}_{\alg{C}}$}, $N = 3969$ degrees of freedom. Due to a small drop-off tolerance, the eigenvalues of $\alg{A}$ and $\alg{C}\alg{C}^*$ are graphically indistinguishable. Therefore the right part only shows the eigenvalues of $\alg{A}_{\alg{C}}$ (using a different scale than the left part of the figure). Bottom: Comparison of the distribution functions $\omega^{\alg{L}}(\lambda)$ (left) and $\omega^{\alg{C}}(\lambda)$ (right) associated with the preconditioned problems. The vertical axes are in the logarithmic scale and $\lambda_1^{\alg{L}} = 1$, $\lambda_N^{\alg{L}} = 161.45$, $\lambda_1^{\alg{C}} = 7.4\times10^{-2}$, $\lambda_N^{\alg{C}} = 1.16$.}
	\label{fig:spectra_large}
\end{figure}
\begin{align*}
\alg{A}_{\alg{L}}\,(\alg{L}^{1/2}\alg{x}) &= \alg{L}^{-1/2}\alg{b},\quad \alg{A}_{\alg{L}} = \alg{L}^{-1/2}\alg{A}\alg{L}^{-1/2},
\end{align*}
respectively
\begin{align*}
\alg{A}_{\alg{C}}\,(\alg{C}^{*}\alg{x}) &= \alg{C}^{-1}\alg{b},\quad \alg{A}_{\alg{C}} = \alg{C}^{-1}\alg{A}\alg{C}^{-*},
\end{align*}
are given by
\begin{equation}
\begin{aligned}
\omega_i^{\alg{L}} = |(\bar{\alg{v}}_i^{\alg{L}})^*\alg{q}^{\alg{L}}|^2, \quad i = 1,\ldots,N,\\
\omega_i^{\alg{C}} = |(\bar{\alg{v}}_i^{\alg{C}})^*\alg{q}^{\alg{C}}|^2,\quad i = 1,\ldots,N.
\end{aligned}
\label{eq:weights}
\end{equation}
Here,
\[
\bar{\alg{v}}_i^{\alg{L}}
	= \frac{\alg{L}^{1/2}\alg{v}_i^{\alg{L}}}{\|\alg{L}^{1/2}\alg{v}_i^{\alg{L}}\|}\,
	\quad \mbox{and} \quad
\bar{\alg{v}}_i^{\alg{C}}
	= \frac{\alg{C}^{*}\alg{v}_i^{\alg{C}}}{\|\alg{C}^{*}\alg{v}_i^{\alg{C}}\|}\,\]
are the eigenvectors of the Hermitian and positive definite matrix $\alg{A}_{\alg{L}}$, respectively, $\alg{A}_{\alg{C}}$,
and
\[
\alg{q}^{\alg{L}} = \frac{\alg{L}^{-1/2}\alg{b}}{\|\alg{L}^{-1/2}\alg{b}\|},\quad
\alg{q}^{\alg{C}} = \frac{\alg{C}^{-1}\alg{b}}{\|\alg{C}^{-1}\alg{b}\|}.
\]
(We use the initial guess $\alg{x}_0 = 0$).
The distribution function $\omega^{\alg{C}}(\lambda)$ has its points of increase much more evenly distributed in the spectral interval $[\lambda_1(\alg{A}_{\alg{C}}), \lambda_N(\alg{A}_{\alg{C}})]$, which leads to a difference in the PCG convergence behavior.
We will return to this issue, and offer a full explanation of the observed CG convergence behavior, after proving the main results and presenting their numerical illustrations.



\section{Analysis}
\label{analysis}
{As mentioned above, {we will not only show that some function values of $k(x)$ are related to the spectrum of  $\alg{L}^{-1}\alg{A}$, but that there exists a one-to-one correspondence, i.e., a pairing, between the individual eigenvalues of $\alg{L}^{-1}\alg{A}$ and quantities given by the function values of $k(x)$ in relation to the supports of the FE basis functions. The proof does not require that $k(x)$ is continuous.}
If, moreover, $k(x)$ is constant on a part of the domain $\Omega$ that contains fully the supports of one or more basis functions, then the {function} value of $k(x)$ determines the associated eigenvalue {\em exactly} and the number of the involved supports bounds from below the multiplicity of the associated eigenvalue. If $k(x)$ is slowly changing over the support of some basis function, then we get a very accurate localization of the associated eigenvalue.}

Our approach is {based} upon the intervals
\begin{equation}\label{eq:range}
k(\bc{T}_j) \equiv [ \min_{ x\in\bc{T}_j} k(x),\max_{ x\in\bc{T}_j} k(x)],\quad j = 1,\ldots,N,
\end{equation}
where $\bc{T}_j=\mbox{supp}(\phi_j)$.\footnote{If $k(x)$ is continuous on $\bc{T}_j$, then $k(\bc{T}_{j})$ coincides with the range of $k(x)$ over $\bc{T}_j$.}
We will first formulate two main results. \cref{th:theorem} localizes the positions of {\em all} the individual eigenvalues of the matrix $\alg{L}^{-1}\alg{A}$ by pairing them with the intervals $k(\bc{T}_j)$ given in \cref{eq:range}. Using the given pairing, \cref{th:taylor} describes the closeness of the eigenvalues to the nodal {function} values of the scalar function $k(x)$.

{The proof of \cref{th:theorem} {combines perturbation theory for matrices with a classical result from the theory of bipartite graphs. 
For clarity of exposition, the proof will be presented} after stating the corollaries of \cref{th:theorem}.} 

\begin{figure}[h!t]
\centering
\begin{tikzpicture}[scale = 0.9]
\tikzstyle{every node}=[font=\small]
\draw (-3,0) -- (-0.5,0);
\draw[loosely dotted] (-0.5,0) -- (1,0);
\draw (1,0) -- (5,0);
\draw[loosely dotted] (5,0) -- (6.5,0);
\draw (6.5,0) -- (10.5,0);
\draw [blue,dashed,thick,domain=0:360] plot [smooth] ({-2+.3*cos(\x)}, {.3*sin(\x)});
\draw [blue,dashed,thick,domain=0:360] plot [smooth] ({-1.5+.5*cos(\x)}, {.5*sin(\x)});
\draw [blue,dashed,thick,domain=0:360] plot [smooth] ({-.8+.6*cos(\x)}, {.6*sin(\x)});
\draw [blue,dashed,thick,domain=0:360] plot [smooth] ({8.25+.75*cos(\x)}, {.75*sin(\x)});
\draw [blue,dashed,thick,domain=0:360] plot [smooth] ({7.5+1.5*cos(\x)}, {1.5*sin(\x)});
\draw [blue,dashed,thick,domain=0:360] plot [smooth] ({9+1*cos(\x)}, {1*sin(\x)});
\draw [blue,dashed,thick,domain=0:360] plot [smooth] ({2+1.5*cos(\x)}, {1.5*sin(\x)});
\draw [blue,dashed,thick,domain=0:360] plot [smooth] ({4+1.5*cos(\x)}, {1.5*sin(\x)});
\coordinate[circle,inner sep=1.4pt,fill=red] () at (-1.9,0);
\coordinate[circle,inner sep=1.4pt,fill=red] () at (-1.75,0);
\coordinate[circle,inner sep=1.4pt,fill=red] () at (-1.1,0);
\coordinate[circle,inner sep=1.4pt,fill=red] () at (2.1,0);
\coordinate[circle,inner sep=1.4pt,fill=red] (${}^*$) at (4.5,0);
\coordinate[circle,inner sep=1.4pt,fill=red] () at (9.5,0);
\coordinate[circle,inner sep=1.4pt,fill=red] () at (8.1,0);
\coordinate[circle,inner sep=1.4pt,fill=red] () at (7.8,0);
\end{tikzpicture}
\caption{Illustration of \cref{th:theorem}. The diameters of the dashed circles indicate the {size  of the} intervals {$k({\bc{T}_j})$}, $j = 1,\ldots,N$. The dots represent the eigenvalues $\lambda_j,\ j = 1,\ldots, N$, of the matrix $\alg{L}^{-1}\alg{A}$. We can find {a} pairing between the intervals $k({\bc{T}_j})$ and the eigenvalues $\lambda_i$, but {the pairing may not be uniquely determined}.} 
\label{fig:theorem}
\end{figure}
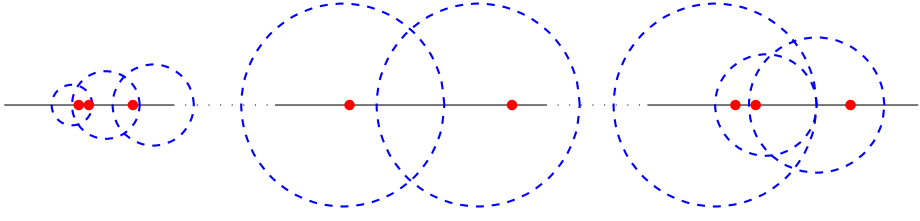
%
\begin{theorem}[{Pairing {the} eigenvalues and
the intervals $k(\bc{T}_{j})$, $j=1,\ldots,N$}]\label{th:theorem}
Using the previous notation, let $0< \lambda_1\leq\lambda_2\leq\ldots\leq\lambda_N$ be the eigenvalues of $\alg{L}^{-1}\alg{A}$, where $\alg{A}$ and $\alg{L}$ are defined by \cref{eq:problem_dis:A,eq:problem_dis:L} respectively (with the subscript $h$ dropped). As in \cref{eq:A1}, let $k(x)$ be measurable and bounded, i.e., $k(x)\in L^{\infty}(\Omega)$. Then there exists a (possibly non-unique) permutation $\pi$ such that the eigenvalues of the matrix $\alg{L}^{-1}\alg{A}$ satisfy
\begin{equation}\label{eq:theorem}
\lambda_{\pi(j)} \in k({\bc{T}_{j}}),\quad j = 1,\ldots,N,
\end{equation}
{where the intervals $k({\bc{T}_{j}})$ are defined {in} \cref{eq:range}.}
\end{theorem}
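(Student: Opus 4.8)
The plan is to set up a bipartite graph between the $N$ eigenvalues $\lambda_1,\dots,\lambda_N$ of $\alg{L}^{-1}\alg{A}$ and the $N$ intervals $k(\bc{T}_1),\dots,k(\bc{T}_N)$, drawing an edge from $\lambda_i$ to $k(\bc{T}_j)$ precisely when $\lambda_i \in k(\bc{T}_j)$, and then to produce the permutation $\pi$ as a perfect matching furnished by Hall's theorem. So the whole argument reduces to verifying Hall's condition: for every subset $S \subseteq \{1,\dots,N\}$ of indices of intervals, the set of eigenvalues lying in $\bigcup_{j\in S} k(\bc{T}_j)$ has cardinality at least $|S|$. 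Equivalently (and this is the form I would actually prove), for any $a \le b$ the number of eigenvalues of $\alg{L}^{-1}\alg{A}$ in $[a,b]$ is at least the number of indices $j$ with $k(\bc{T}_j) \subseteq [a,b]$ — one then applies this with $[a,b]$ chosen so that it contains exactly those $k(\bc{T}_j)$, $j\in S'$, for a suitable $S'\supseteq S$; more care is needed since a union of intervals need not be an interval, so I would actually run Hall's criterion on the connected components of the union, where each component \emph{is} an interval, and it suffices to count eigenvalues in each component separately.

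The heart of the matter is therefore the eigenvalue count, and this is where the perturbation-theory ingredient enters. Write $k_-(j) = \min_{x\in\bc{T}_j} k(x)$ and $k_+(j) = \max_{x\in\bc{T}_j} k(x)$. Fix an interval $[a,b]$ and let $S' = \{ j : [k_-(j),k_+(j)] \subseteq [a,b]\}$, with $m = |S'|$. The idea is to compare $\alg{A}$ with the two scaled stiffness matrices one gets by replacing $k(x)$, on the support of each basis function in $S'$, by the constants $a$ and $b$ respectively. Concretely, consider the symmetric eigenvalue problem for $\alg{L}^{-1/2}\alg{A}\alg{L}^{-1/2}$; its $i$-th smallest eigenvalue has the Courant–Fischer characterization $\lambda_i = \min_{\dim W = i}\max_{0\ne w\in W} \frac{w^*\alg{A}w}{w^*\alg{L}w}$ over subspaces $W$ of $\mathbb{R}^N$ (after the substitution $w = \alg{L}^{1/2}v$, equivalently over the quotient $\frac{v^*\alg{A}v}{v^*\alg{L}v}$ with $v$ expanded in the FE basis). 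The key local observation is that for a finite-element function $u_h = \sum_i c_i\phi_i$ one has $u_h^\top \alg{A} u_h = \int_\Omega k(x)\,|\nabla u_h|^2$, and by splitting the integral over the supports $\bc{T}_j$ one can bound this quotient from above and below by the corresponding quotients with $k$ replaced, on those supports, by its local extrema. This lets me sandwich the eigenvalues of $\alg{L}^{-1}\alg{A}$, via monotonicity of eigenvalues under the Loewner order, between eigenvalues of matrices whose ``$k$'' is constant $=a$ or $=b$ on the supports indexed by $S'$; counting how many eigenvalues of those comparison matrices must fall in $[a,b]$ (at least $m$, essentially because an $m$-dimensional coordinate subspace of basis functions from $S'$ is ``insulated'' and carries a Rayleigh quotient pinned between $a$ and $b$) gives Hall's inequality. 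I would phrase the ``insulated subspace'' step through the interlacing inequalities for the quotient $\frac{v^*\alg{A}v}{v^*\alg{L}v}$: restricting $v$ to the span of $\{\phi_j : j\in S'\}$ and using that on this span the Rayleigh quotient lies in $[a,b]$ up to the contributions of $\nabla\phi_j$ that leak onto neighboring supports — which must be handled by noting that those neighbors' contributions only involve $k$-values, not constraints.

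The main obstacle I anticipate is exactly that ``leakage'': the support $\bc{T}_j$ of $\phi_j$ overlaps the supports of neighboring basis functions, so $u_h^\top\alg{A}u_h = \int_\Omega k|\nabla u_h|^2$ does not decouple cleanly into per-support pieces weighted by a single $k(\bc{T}_j)$; a given element $\tau$ of the mesh lies in several $\bc{T}_j$ simultaneously. The honest way around this is to re-index the comparison not by basis functions but by mesh elements: on each element $\tau$, $k(x)\in[\min_\tau k,\max_\tau k]$, and $\min_\tau k \ge k_-(j)$ whenever $\tau\subseteq\bc{T}_j$. One should therefore prove the clean bound $\lambda_i(\alg{L}^{-1}\alg{A}_{k\mapsto \underline{k}}) \le \lambda_i(\alg{L}^{-1}\alg{A}) \le \lambda_i(\alg{L}^{-1}\alg{A}_{k\mapsto\overline{k}})$ for any measurable $\underline k \le k \le \overline k$ (immediate from the Loewner order on the integrands), choose $\underline k,\overline k$ piecewise constant and adapted to $S'$, and then the combinatorial counting for the piecewise-constant comparison matrices is where the structure of overlapping supports is finally confronted — I expect this to require a careful but elementary argument that a block of $m$ mutually-or-chained overlapping supports on which $k$ is squeezed into $[a,b]$ forces $m$ eigenvalues into $[a,b]$, plausibly again via Courant–Fischer applied to the coordinate subspace spanned by those $m$ basis functions together with a dimension/interlacing count. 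Once Hall's condition is in hand, extracting $\pi$ and hence \eqref{eq:theorem} is immediate.
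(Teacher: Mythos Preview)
Your overall architecture---bipartite graph between eigenvalues and intervals, Hall's marriage theorem, reduction of Hall's condition to a counting statement on each connected component of $\bigcup_{j\in S}k(\bc{T}_j)$---is exactly the paper's route. The divergence, and the gap, is in how you establish the count ``at least $m=|\bc J|$ eigenvalues lie in $[\min_{\bc T_{\bc J}}k,\max_{\bc T_{\bc J}}k]$''.

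Your pinned--Rayleigh--quotient observation is correct and there is in fact no leakage there: if $v\in\operatorname{span}\{\phi_j:j\in\bc J\}$ then $\operatorname{supp}(v)\subseteq\bc T_{\bc J}$, so
\[
\frac{v^{\!*}\alg{A}v}{v^{\!*}\alg{L}v}=\frac{\int_{\bc T_{\bc J}}k\,|\nabla v|^2}{\int_{\bc T_{\bc J}}|\nabla v|^2}\in\bigl[\min_{\bc T_{\bc J}}k,\max_{\bc T_{\bc J}}k\bigr].
\]
But a single $m$-dimensional test space in Courant--Fischer only yields $\lambda_m\le b$ and $\lambda_{N-m+1}\ge a$; it does \emph{not} force $m$ eigenvalues into $[a,b]$. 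Your two-sided Loewner sandwich with $\underline k,\overline k$ has the analogous defect: the $m$ indices at which $\lambda_s(\alg L^{-1}\alg A_{\underline k})=a$ need not be the same $m$ indices at which $\lambda_s(\alg L^{-1}\alg A_{\overline k})=b$, so you cannot read off $m$ eigenvalues of $\alg L^{-1}\alg A$ in $[a,b]$ without a further argument (which, if you chase it, amounts to the paper's perturbation bound anyway).

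The paper closes this gap with one clean step you are circling but have not landed on. Set $k$ equal to a \emph{single} constant $K$ on all of $\bc T_{\bc J}$ (and leave it unchanged elsewhere), obtaining $\tilde{\alg A}_{\bc J}$. Because $\operatorname{supp}(\phi_j)=\bc T_j\subseteq\bc T_{\bc J}$, each $\phi_j$ with $j\in\bc J$ is an \emph{exact} eigenfunction of the modified generalized problem: $\tilde{\alg A}_{\bc J}\alg e_j=K\,\alg L\alg e_j$. Hence $K$ is an eigenvalue of $\alg L^{-1}\tilde{\alg A}_{\bc J}$ of multiplicity at least $m$. The perturbation $\alg E_{\bc J}=\alg A-\tilde{\alg A}_{\bc J}$ satisfies, by the same Rayleigh-quotient calculation, $|\theta|\le\max_{\bc T_{\bc J}}|k-K|$ for every eigenvalue $\theta$ of $\alg L^{-1}\alg E_{\bc J}$. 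Weyl's inequality (applied to the symmetrized forms $\alg L^{-1/2}(\cdot)\alg L^{-1/2}$) then gives, for the \emph{same} $m$ indices $s$ at which $\lambda_s(\alg L^{-1}\tilde{\alg A}_{\bc J})=K$, that $\lambda_s(\alg L^{-1}\alg A)\in[K-\max|k-K|,K+\max|k-K|]$; choosing $K$ to be the midpoint collapses this to $[\min_{\bc T_{\bc J}}k,\max_{\bc T_{\bc J}}k]$. This is the content of the paper's \cref{th:lemma2}, after which \cref{th:cor} and Hall's theorem finish exactly as you outline. In short: your ``leakage'' worry evaporates once you make $k$ constant on the whole union, and a single comparison matrix plus Weyl replaces the two-sided sandwich that was not quite closing.
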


\noindent
The statement is illustrated in \cref{fig:theorem}.  {The proof of the following corollary uses the one-to-one pairing of the intervals \cref{eq:range}, and therefore also the values of $k(x)$ at the nodes of the discretization mesh,  with the eigenvalues $\lambda_{\pi(j)}$.}

\begin{corollary}[{Pairing the eigenvales and} the nodal values]\label{th:taylor}
Using the notation of \cref{th:theorem}, consider any discretization mesh node $ \hat{x}$ such that $\hat{x}\in {\bc{T}_j}$.
Then the associated eigenvalue $\lambda_{\pi(j)}$ of the matrix $\alg{L}^{-1}\alg{A}$ satisfies
\begin{equation}\label{eq:bound:loose}
	|\lambda_{\pi(j)} - k( \hat{x})| \leq \max_{ x\in\bc{T}_j}|k( x) - k( \hat{x})|.
\end{equation}
{If, in addition,  $k( x) \in \bc{C}^2({\bc{T}_j})$, then}
\begin{align}\nonumber
	|\lambda_{\pi(j)} - k( \hat{x})|
		&\leq \max_{ x\in\bc{T}_j}|k( x) - k( \hat{x})| \\
			 \label{eq:h_taylor}
		&\leq \hat{h}\|\nabla k( \hat{x})\|
			+ \tfrac{1}{2}\hat{h}^2\max_{ x\in\bc{T}_j}\|D^2k( x)\| 
\end{align}
where $\hat{h} = \max_{ x\in\bc{T}_j}\|x - \hat{x}\|$ {and $D^2k(x)$ is the second order derivative} of the function $k(x)$.\footnote{{See \cite[Section 1.2]{CiaB02} for the definition {of the second order derivative.}}}
\end{corollary}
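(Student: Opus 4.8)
The plan is to derive Corollary \ref{th:taylor} directly from Theorem \ref{th:theorem} with no further spectral or graph-theoretic machinery; the work is entirely in elementary estimates once the pairing $\pi$ is in hand. First I would fix a node $\hat{x}\in\bc{T}_j$ and invoke \cref{eq:theorem}, which gives $\lambda_{\pi(j)}\in k(\bc{T}_j) = [\min_{x\in\bc{T}_j}k(x),\ \max_{x\in\bc{T}_j}k(x)]$. Since $\hat{x}\in\bc{T}_j$, the value $k(\hat{x})$ also lies in this interval, so both $\lambda_{\pi(j)}$ and $k(\hat{x})$ belong to $k(\bc{T}_j)$, whence $|\lambda_{\pi(j)}-k(\hat{x})|$ is bounded by the length of the interval, i.e. by $\max_{x\in\bc{T}_j}k(x)-\min_{x\in\bc{T}_j}k(x)$. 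A short argument then shows this length equals $\max_{x\in\bc{T}_j}|k(x)-k(\hat{x})|$: for any $x$, $|k(x)-k(\hat{x})|\le \max k - \min k$ on $\bc{T}_j$, giving ``$\le$''; conversely, taking $x$ to be (a point arbitrarily close to) the argmax or argmin of $k$ on $\bc{T}_j$ and using that $k(\hat{x})$ is squeezed between $\min k$ and $\max k$ shows the supremum on the left is at least $\max k - \min k$. This establishes \cref{eq:bound:loose}.

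For the refined bound \cref{eq:h_taylor}, I would assume $k\in\bc{C}^2(\bc{T}_j)$ and estimate $|k(x)-k(\hat{x})|$ for an arbitrary $x\in\bc{T}_j$ by Taylor's theorem with second-order remainder (e.g. in the integral or Lagrange form; see \cite[Section 1.2]{CiaB02}), expanding about $\hat{x}$ along the segment from $\hat{x}$ to $x$. This yields
\[
|k(x)-k(\hat{x})| \le \|\nabla k(\hat{x})\|\,\|x-\hat{x}\| + \tfrac{1}{2}\,\|x-\hat{x}\|^2 \max_{y\in\bc{T}_j}\|D^2k(y)\|,
\]
provided the segment $[\hat{x},x]$ lies in $\bc{T}_j$. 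Bounding $\|x-\hat{x}\|\le\hat{h}$ and taking the maximum over $x\in\bc{T}_j$, then chaining with \cref{eq:bound:loose}, gives \cref{eq:h_taylor}.

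The main obstacle I anticipate is the convexity issue: Taylor's theorem with a remainder along the segment $[\hat{x},x]$ requires that segment to be contained in $\bc{T}_j$ (or at least in the region where $k$ is $\bc{C}^2$), but the support $\bc{T}_j$ of a nodal basis function need not be convex in general. For standard conforming finite elements on a shape-regular mesh $\bc{T}_j$ is a union of elements sharing the node, which is ``star-shaped'' with respect to $\hat{x}$ when $\hat{x}$ is the associated vertex, so the segment condition does hold; I would state this as the operative assumption (or note that $\hat{h}$ is then comparable to the local mesh size $h$). A secondary, purely cosmetic point is that ``$\min$'' and ``$\max$'' in \cref{eq:range} should be read as essential infimum/supremum when $k$ is merely in $L^\infty$, but under the $\bc{C}^2$ hypothesis of \cref{eq:h_taylor} the extrema are genuinely attained on the compact set $\bc{T}_j$, so this subtlety does not affect the refined estimate. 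Everything else is a routine triangle-inequality bookkeeping, so the proof is short.
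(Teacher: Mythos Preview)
Your overall approach matches the paper's: invoke \cref{th:theorem} to place both $\lambda_{\pi(j)}$ and $k(\hat x)$ in the interval $k(\bc{T}_j)=[m,M]$, then Taylor-expand for the $\bc{C}^2$ case. However, your derivation of \cref{eq:bound:loose} contains a slip. You bound $|\lambda_{\pi(j)}-k(\hat x)|$ by the full length $M-m$ and then assert that $M-m=\max_{x\in\bc{T}_j}|k(x)-k(\hat x)|$. That equality is false in general: since $k(\hat x)\in[m,M]$, one has
\[
\max_{x\in\bc{T}_j}|k(x)-k(\hat x)|=\max\bigl(M-k(\hat x),\,k(\hat x)-m\bigr)\le M-m,
\]
with strict inequality whenever $k(\hat x)$ is interior to $[m,M]$. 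Your ``conversely'' step, plugging in the argmax of $k$, only yields $M-k(\hat x)$, not $M-m$. So the chain $|\lambda_{\pi(j)}-k(\hat x)|\le M-m = \max_x|k(x)-k(\hat x)|$ breaks at the equality. The fix is to skip the detour through the length altogether and argue directly, exactly as the paper does: if $\lambda_{\pi(j)}\ge k(\hat x)$ then $|\lambda_{\pi(j)}-k(\hat x)|\le M-k(\hat x)$, otherwise $|\lambda_{\pi(j)}-k(\hat x)|\le k(\hat x)-m$; either way the bound is $\max(M-k(\hat x),\,k(\hat x)-m)=\max_{x\in\bc{T}_j}|k(x)-k(\hat x)|$.

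For \cref{eq:h_taylor} your Taylor argument is identical to the paper's (Lagrange-form remainder along the segment $[\hat x,x]$, then take $\|x-\hat x\|\le\hat h$). Your observation about needing the segment to stay in $\bc{T}_j$, and that this holds because the nodal patch is star-shaped with respect to $\hat x$, is a point the paper simply passes over; it is a fair caveat and your treatment here is, if anything, more careful than the original.
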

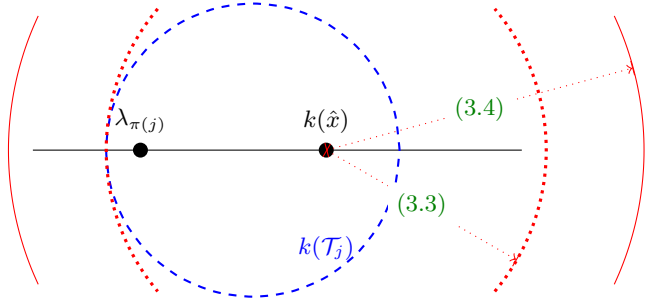
\begin{figure}[h!t]
\centering
\begin{tikzpicture}[scale = 0.65]
\tikzstyle{every node}=[font=\small]
\draw (0,0) -- (10,0);
\coordinate[circle,inner sep=2pt,fill=black,label=above:{$k( \hat{x})$}] (Ki) at (6,0);
\draw [dashed,blue,thick,domain=0:360] plot [smooth] ({4.5+3*cos(\x)}, {3*sin(\x)});
\draw [red,dotted,very thick,domain=-40:40] plot [smooth] ({6+4.5*cos(\x)}, {4.5*sin(\x)});
\draw [red,dotted,very thick,domain=140:220] plot [smooth] ({6+4.5*cos(\x)}, {4.5*sin(\x)});
\draw [red,domain=-25:25] plot [smooth] ({6+6.5*cos(\x)}, {6.5*sin(\x)});
\draw [red,domain=155:205] plot [smooth] ({6+6.5*cos(\x)}, {6.5*sin(\x)});
\node[text=blue] (AAA) at (6,-2) {$k({\mathcal T}_j)$};
\draw[|->|, red,dotted,rotate around={15:(6,0)}] (6,0) -- (12.5,0)
	node [midway,fill=white] {\cref{eq:h_taylor}};
\draw[|->|, red,dotted,rotate around={-30:(6,0)}] (6,0) -- (10.5,0)
	node [midway,fill=white] {\cref{eq:bound:loose}};
\coordinate[circle,inner sep=2pt,fill=black,label=above:{$\lambda_{\pi(j)}$}] (LI) at (2.2,0);
\end{tikzpicture}
\caption{Illustration of \cref{th:taylor}. The relation \cref{eq:theorem} (indicated by the dashed {blue} circle) {can give significantly} better localization of the position of the individual eigenvalues than the bounds \cref{eq:bound:loose} (indicated by the dotted {red} circle) and \cref{eq:h_taylor} (indicated by the solid {red} circle). {When $k(x)$ is constant over $\bc{T}_j$, then $k(\bc{T}_j)$ reduces to one point $\lambda_{\pi(j)}$; see also \cref{eq:bound:loose}. The bound \cref{eq:h_taylor} is weaker that \cref{eq:theorem,eq:bound:loose}, but the evaluation of its first term might be easier in practice.}}
\label{fig:bounds}
\end{figure}
\begin{proof}
Since both $\lambda_{\pi(j)}\in k(\bc{T}_j)$ and $k(\hat{x})\in k(\bc{T}_j)$, it trivially follows that
\[|\lambda_{\pi(j)} - k( \hat{x})| \leq \max_{ x\in\bc{T}_j}|k( x) - k( \hat{x})|.\]
Moreover, for any $x\in\bc{T}_j$, the multidimensional Taylor expansion (see, e.g., \cite[p.~11, Section 1.2]{CiaB02}) gives for {$k( x) \in \bc{C}^2({\bc{T}_j})$} that
\begin{align*}
	k(x)-k(\hat{x})
		& = \nabla k( \hat{x}) (x-\hat{x}) \\
		& + \tfrac{1}{2}D^2k(\hat{x} +\alpha(x-\hat{x}))(x-\hat{x},x-\hat{x})
\end{align*}
where $\alpha\in[0,1]$, with the absolute value {obeying} 
\begin{align*}
|k(x)-k(\hat{x})| &\leq \|\nabla k( \hat{x})\| \|(x-\hat{x})\| \\
		&+ \tfrac{1}{2}\|x-\hat{x}\|^2\|D^2k(\hat{x} +\alpha(x-\hat{x}))\|,
\end{align*}
giving the statement.
\end{proof}

{We now give the proof of} \cref{th:theorem}.
\Cref{th:lemma2} {below} and its \cref{th:cor} identify the groups of eigenvalues in any union of intervals
\begin{equation}\label{eq:interval}
\bar{k}(\bc{T}_{\bc{J}})\equiv \bigcup_{j\in\bc{J}}k(\bc{T}_j),\quad \bc{J}\subset\{1,\ldots,N\}.
\end{equation}
{This enables us to apply Hall's theorem, see \cite[Theorem~5.2]{Bondy1976} or, e.g., \cite[Theorem~1]{Hall35}, to prove \cref{th:theorem}. (For the sake of completeness, we have also formulated Hall's result below in \cref{th:hall}.)}
%
%
\begin{lemma}\label{th:lemma2}
{Using the notation introduced above, let} $\bc{J}\subset\{1,\ldots,N\}$ and $\bc{T}_{\bc{J}} = \cup_{j\in\bc{J}}\bc{T}_j$.
Then there exist at least $p = |\bc{J}|$ eigen\-values $\tilde{\lambda}_1,\ldots,\tilde{\lambda}_p$ of 
$\alg{L}^{-1}\alg{A}$ such that
\begin{equation}\label{eq:lemma2}
\tilde{\lambda}_{\ell} \in [\min_{x\in\bc{T}_\bc{J}} k(x),\max_{x\in\bc{T}_\bc{J}} k(x)],
\quad \ell = 1,\ldots,p.
\end{equation}
\end{lemma}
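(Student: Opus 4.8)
The plan is to reduce the general coefficient $k(x)$ to a piecewise constant one by a perturbation argument that exploits the locality of the finite element basis. Set $a := \min_{x \in \bc{T}_{\bc{J}}} k(x)$, $b := \max_{x \in \bc{T}_{\bc{J}}} k(x)$, $c := (a+b)/2$ and $\delta := (b-a)/2$, and recall that, since $\alg{A}$ and $\alg{L}$ are symmetric positive definite, the eigenvalues of $\alg{L}^{-1}\alg{A}$ coincide (with multiplicities) with those of the symmetric matrix $\alg{B} := \alg{L}^{-1/2}\alg{A}\alg{L}^{-1/2}$.

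First I would introduce a comparison matrix $\tilde{\alg{A}}$: the stiffness matrix assembled from the coefficient $\tilde k$ that equals $k$ on $\Omega \setminus \bc{T}_{\bc{J}}$ and equals the constant $c$ on $\bc{T}_{\bc{J}}$ (no continuity of $\tilde k$ is required). The key claim to establish is that $c$ is an eigenvalue of $\alg{L}^{-1}\tilde{\alg{A}}$ of multiplicity at least $p$. Indeed, for any coefficient vector $\alg{v}$ with $v_i = 0$ for $i \notin \bc{J}$, the function $u_h = \sum_{i\in\bc{J}} v_i \phi_i$ is supported in $\bc{T}_{\bc{J}}$, hence $(\tilde k - c)\nabla u_h \equiv 0$ on all of $\Omega$; therefore the $i$-th component of $(\tilde{\alg{A}} - c\alg{L})\alg{v}$, which equals $\int_\Omega \nabla\phi_i \cdot (\tilde k - c)\nabla u_h$, vanishes for \emph{every} $i$, not merely for $i \in \bc{J}$. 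The $p$-dimensional space of such $\alg{v}$ thus lies in $\ker(\tilde{\alg{A}} - c\alg{L})$, so $c$ occurs at least $p$ times in the ordered spectrum of $\tilde{\alg{B}} := \alg{L}^{-1/2}\tilde{\alg{A}}\alg{L}^{-1/2}$.

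Next I would estimate the perturbation. For an arbitrary $\alg{w}$, setting $\alg{v} = \alg{L}^{-1/2}\alg{w}$ and $u_h = \sum_i v_i \phi_i$, one computes $\alg{w}^{\top}(\alg{B} - \tilde{\alg{B}})\alg{w} = \int_{\bc{T}_{\bc{J}}} (k - c)|\nabla u_h|^2$, and since $|k - c| \le \delta$ almost everywhere on $\bc{T}_{\bc{J}}$ while $\int_{\bc{T}_{\bc{J}}} |\nabla u_h|^2 \le \int_\Omega |\nabla u_h|^2 = \alg{w}^{\top}\alg{w}$, the symmetric matrix $\alg{B} - \tilde{\alg{B}}$ has spectral norm at most $\delta$. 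Weyl's inequality then gives $|\lambda_j(\alg{B}) - \lambda_j(\tilde{\alg{B}})| \le \delta$ for every $j$; applying this to the (at least) $p$ indices $j$ for which $\lambda_j(\tilde{\alg{B}}) = c$ produces $p$ eigenvalues of $\alg{B}$, hence of $\alg{L}^{-1}\alg{A}$, lying in $[c - \delta, c + \delta] = [a,b]$, which is precisely \cref{eq:lemma2}.

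I expect the crux — and the step most easily glossed over — to be the multiplicity claim in the second paragraph: it is essential that flattening $k$ to a constant \emph{exactly on} $\bc{T}_{\bc{J}}$ makes $(\tilde k - c)\nabla u_h$ vanish \emph{pointwise}, so that $(\tilde{\alg{A}} - c\alg{L})\alg{v} = 0$ holds in all $N$ rows; the weaker fact that $\alg{v}^{\top}\tilde{\alg{A}}\alg{v} = c\,\alg{v}^{\top}\alg{L}\alg{v}$ on the span of $\{\phi_j : j \in \bc{J}\}$ would not, by itself, force $c$ to be an eigenvalue of large multiplicity. A minor technical point is the interpretation of $\min$ and $\max$ of a merely bounded coefficient over $\bc{T}_{\bc{J}}$; this is harmless, as all the estimates use bounds on $k$ only almost everywhere.
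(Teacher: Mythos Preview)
Your proof is correct and follows essentially the same route as the paper's: replace $k$ by a constant $c=(a+b)/2$ on $\bc{T}_{\bc{J}}$, observe that each $\phi_j$, $j\in\bc{J}$, becomes an eigenfunction with eigenvalue $c$ (the paper verifies this on the basis vectors $\alg{e}_j$ rather than on their span), bound the Rayleigh quotient of the symmetric perturbation by $\delta=(b-a)/2$, and apply Weyl's inequality (cited in the paper as \cite[Corollary~4.9]{SSB90}). Your added remark distinguishing the pointwise vanishing of $(\tilde k - c)\nabla u_h$ from the mere quadratic-form identity is a worthwhile clarification that the paper leaves implicit.
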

\begin{proof}
In brief, {the proof {is based on} the theory of eigenvalue perturbations of matrices. We} locally modify the scalar function $k(x)$ by setting it {equal to a positive constant} $K$ in the union $\bc{T}_\bc{J}$ of the supports $\bc{T}_j$, $j\in\bc{J}$. {This will result, after discretization, in a modified matrix $\tilde{\alg{A}}_{\bc{J}}$ such that $K$ is an eigenvalue of $\alg{L}^{-1}\tilde{\alg{A}}_{\bc{J}}$ of at least $p$  multiplicity. An easy bound for the eigenvalues of
\begin{equation}\label{eq:error:matrix}
\alg{L}^{-1}\alg{E}_{\bc{J}},
	\quad \mbox{where} \quad
\alg{E}_{\bc{J}} = \alg{A} - \tilde{\alg{A}}_{\bc{J}},
\end{equation}
combined with a standard perturbation theorem for matrices, then provide a bound for the associated $p$ eigenvalues of $\alg{L}^{-1}\alg{A}$. {A particular} choice of the positive constant $K$ will finish the proof.}

Let
\begin{align*}
\tilde{k}_\bc{J}( x) &=
	\begin{cases}
		K \qquad 	\mbox{for} \quad  x\in\bc{T}_{\bc{J}}, \\
		k( x) \quad \mbox{elsewhere}; 
	\end{cases}
\end{align*}
{with}
\begin{align*}
\dual{\tilde{\bc{A}}_{\bc{J},h}u}{v} &\equiv
	\int_{\Omega} \nabla u\cdot \tilde{k}_\bc{J}\nabla v\, ,\quad u,v\in V_h,
\end{align*}
{where, analogously to \cref{eq:problem_dis:A},}
\begin{align*}
[\tilde{\alg{A}}_{\bc{J}}]_{lj} &=
	\dual{\tilde{\bc{A}}_{\bc{J},h}\phi_j}{\phi_l} =
	\int_{\Omega} \nabla \phi_l\cdot \tilde{k}_{\bc{J}}\nabla\phi_j\,.
\end{align*}

Since $\tilde{k}_{\bc{J}}$ is constant on each $\bc{T}_{j},\, j\in{\bc{J}},$ and the support of the basis function $\phi_j$ is $\bc{T}_j$, it holds for any $v\in V_h$ that
\begin{align*}
\dual{\tilde{\bc{A}}_{\bc{J},h}\phi_j}{v}
	&= \int_{\Omega}   \nabla \phi_j\cdot \tilde{k}_{\bc{J}} \nabla v\,
	 = \int_{\bc{T}_j} \nabla \phi_i\cdot \tilde{k}_{\bc{J}} \nabla v\,
	\\
	&= K\int_{\bc{T}_j} \nabla \phi_j\cdot \nabla v\,
	 = K\dual{ {\bc{L}}_h\phi_j}{v},\quad j \in{\bc{J}}.
\end{align*}
Thus, $K$ is an eigenvalue of the operator $\bc{L}_h^{-1}\tilde{\bc{A}}_{\bc{J},h}$ associated with the eigenfunctions $\phi_j$, $j\in\bc{J}$, and therefore $K$ is the eigenvalue of the matrix $\alg{L}^{-1}\tilde{\alg{A}}_{\bc{J}}$ with the multiplicity at least $p$.
{This can also be verified {by} construction by observing that }
\[\tilde{\alg{A}}_{\bc{J}}\alg{e}_j = K\,\alg{L}\alg{e}_j,\quad j \in{\bc{J}}.\]

Consider now the eigenvalues of {$\alg{L}^{-1}\alg{E}_{\bc{J}}$; see} \cref{eq:error:matrix}.
The Rayleigh quotient for {an} eigenpair $(\theta,\alg{q})$, $\|\alg{q}\|=1$, and the associated eigenfunction $q = \sum_{j=1}^N\nu_j\phi_j$, where $\alg{q}^T = [\nu_1,\ldots,\nu_N]$, satisfies
\begin{align*}
\theta &= \frac{\alg{q}^T\alg{E}_{\bc{J}}\alg{q}}{\alg{q}^T\alg{L}\alg{q}}
		= \frac{\alg{q}^T(\alg{A} - \tilde{\alg{A}}_{\bc{J}})\alg{q}}{\alg{q}^T\alg{L}\alg{q}}
		= \frac{\dual{ ( \bc{A}_h - \tilde{\bc{A}}_{\bc{J},h})q}{q}}{\dual{ {\bc{L}}_hq}{q}}
		\\
	   &= \frac{\int_{\Omega} \nabla q \cdot (k( x) - \tilde{k}_{\bc{J}}( x))\nabla q\, d x }
	   			{\int_{\Omega} \|\nabla q\|^2\, d x}
		= \frac{\int_{\bc{T}_{\bc{J}}} (k( x) -K)\|\nabla q\|^2\, d x }
				{\int_{\Omega} \|\nabla q\|^2\, d x},
\end{align*}
giving
\begin{equation}
\label{BF1}
|\theta| \leq \max_{ x\in\bc{T}_{\bc{J}}}|k( x) - K|.
\end{equation}

{Next, consider the symmetric matrices} 
\[
\alg{A}_{\alg{L}} = \alg{L}^{-1/2}\alg{A}\alg{L}^{-1/2}, \quad
\alg{E}_{\alg{L}} = \alg{L}^{-1/2}\alg{E}_{\bc{J}}\alg{L}^{-1/2},\quad
\tilde{\alg{A}}_{\alg{L}} = \alg{L}^{-1/2}\tilde{\alg{A}}_{\bc{J}}\alg{L}^{-1/2}.
\]
{According to a standard result from the perturbation theory of matrices, see, e.g., \cite[Corollary~4.9, p.~203]{SSB90}, 
we find that }
\[
\lambda_s({\alg{A}}_{\alg{L}}) = \lambda_s(\tilde{\alg{A}}_{\alg{L}} + \alg{E}_{\alg{L}})
		\in [\lambda_s(\tilde{\alg{A}}_{\alg{L}}) + \theta_{min} ,
			 \lambda_s(\tilde{\alg{A}}_{\alg{L}}) + \theta_{max} ],\quad s=1,\ldots,N,
\]
where $\theta_{min}$ and $\theta_{max}$ are the smallest and largest
eigenvalues of $\alg{E}_{\alg{L}}$ respectively.
Since the matrices $\alg{L}^{-1}\alg{A}$, $\alg{L}^{-1}\alg{E}_{\bc{J}}$
and $\alg{L}^{-1}\tilde{\alg{A}}_{\bc{J}}$ have the same spectrum as the
matrices $\alg{A}_{\alg{L}}$, $\alg{E}_{\alg{L}}$ and
$\tilde{\alg{A}}_{\alg{L}}$, respectively, it follows that
\[
\lambda_s(\alg{L}^{-1}{\alg{A}}) = \lambda_s(\alg{L}^{-1}\tilde{\alg{A}}_{\bc{J}} + \alg{L}^{-1}\alg{E}_{\bc{J}})
		\in [\lambda_s(\alg{L}^{-1}\tilde{\alg{A}}_{\bc{J}}) + \theta_{min} ,
			 \lambda_s(\alg{L}^{-1}\tilde{\alg{A}}_{\bc{J}}) + \theta_{max} ].
\]	
Due to \cref{BF1}, 
\begin{align*}
\theta_{min} & \geq -\max_{ x\in\bc{T}_{\bc{J}}}|k( x) - K|, \\
\theta_{max} & \leq \max_{ x\in\bc{T}_{\bc{J}}}|k( x) - K|, 
\end{align*}
and thus, 
since $K$ is at least a $p$-multiple eigenvalue of $ \alg{L}^{-1}\tilde{\alg{A}}_{\bc{J}}$, there exist $p$ eigenvalues  $\tilde{\lambda}_1,\ldots,\tilde{\lambda}_p$ of $ \alg{L}^{-1}{\alg{A}}$ such that
\begin{equation}\label{eq:K:bound}
\tilde{\lambda}_{\ell} \in
	[\, K - \max_{ x\in\bc{T}_{\bc{J}}} |k( x) - K|,\,
		K + \max_{ x\in\bc{T}_{\bc{J}}} |k( x) - K|
	\,], \quad \ell=1,\ldots,p.
\end{equation}
Setting
\[
K = \tfrac{1}{2}(\min_{ x\in\bc{T}_{\bc{J}}} k(x) +
				   \max_{ x\in\bc{T}_{\bc{J}}} k(x))
\]
gives
\[
\tilde{\lambda}_{\ell} \in
	[\min_{ x\in\bc{T}_{\bc{J}}} k(x),
	 \max_{ x\in\bc{T}_{\bc{J}}} k(x)],\quad \ell=1,\ldots,p.
\]
\end{proof}

{Applying \cref{th:lemma2} $N$ times} with $\bc{J} = \{1\}$, $\bc{J} = \{2\},\ldots,\,\bc{J} = \{N\}$, we see that, for the support of any basis function $\phi_j$ there is an eigenvalue $\tilde{\lambda}$ of $\alg{L}^{-1}\alg{A}$ such that $\tilde{\lambda} \in k({\bc{T}_j})$.
Moreover, {as an additional important consequence,} for any subset $\bc{J}\subset\{1,\ldots,N\}$ the associated union of intervals $\bar{k}(\bc{T}_{\bc{J}})$ (see \cref{eq:interval}) contains at least  $p = |\bc{J}| $ eigen\-values of  $\alg{L}^{-1}\alg{A}$; see the following corollary.
%
%
\begin{corollary}\label{th:cor}
Let, {as above,} $\bc{J}\subset\{1,\ldots,N\}$ and $\bc{T}_{\bc{J}} = \cup_{j\in\bc{J}}\bc{T}_j$.
Then there exist at least $p = |\bc{J}| $ eigenvalues $\tilde{\lambda}_1,\ldots,\tilde{\lambda}_p$ of 
$\alg{L}^{-1}\alg{A}$ such that
\begin{equation}\label{eq:corr}
\tilde{\lambda}_{\ell} \in \bar{k}(\bc{T}_{\bc{J}})\equiv \bigcup_{j\in\bc{J}}k(\bc{T}_j),
\quad \ell=1,\ldots,p.
\end{equation}
{Moreover, taking} $\bc{J} = \{1,\ldots,N\}$, \cref{eq:corr} immediately implies that any eigenvalue  $\tilde{\lambda}$ of  $\alg{L}^{-1}\alg{A}$ belongs to (at least one) interval $k(\bc{T}_j)$, $j \in \{1,\ldots,N\}$.
\end{corollary}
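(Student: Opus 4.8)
The plan is to deduce the corollary from \cref{th:lemma2} by a short bookkeeping argument on the structure of the union $\bar{k}(\bc{T}_{\bc{J}}) = \bigcup_{j\in\bc{J}} k(\bc{T}_j)$. Note that \cref{th:lemma2} on its own places the $p$ eigenvalues only in the single interval $[\min_{x\in\bc{T}_\bc{J}}k(x),\max_{x\in\bc{T}_\bc{J}}k(x)]$, which is typically strictly larger than $\bar{k}(\bc{T}_\bc{J})$; the task is therefore to split $\bc{J}$ appropriately and apply \cref{th:lemma2} to the pieces.

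First I would decompose the finite union of closed intervals $\bar{k}(\bc{T}_\bc{J})$ into its maximal subintervals $I_1,\ldots,I_m$, which are pairwise disjoint and mutually separated. Each interval $k(\bc{T}_j)$ is connected, hence contained in exactly one $I_r$; this induces a partition $\bc{J} = \bigcup_{r=1}^m \bc{J}_r$ with $\bc{J}_r = \{\, j\in\bc{J}:\ k(\bc{T}_j)\subseteq I_r \,\}$, so that $p = |\bc{J}| = \sum_{r=1}^m |\bc{J}_r|$. The key observation is that $I_r$ is precisely the interval attached to $\bc{T}_{\bc{J}_r} = \bigcup_{j\in\bc{J}_r}\bc{T}_j$ by \cref{eq:range}: a connected finite union of closed bounded intervals equals the closed interval between the smallest of the left endpoints and the largest of the right endpoints of its constituents, and since $\min_{x\in\bc{T}_{\bc{J}_r}}k(x) = \min_{j\in\bc{J}_r}\min_{x\in\bc{T}_j}k(x)$ (and the analogous identity for the maxima), we obtain $I_r = [\min_{x\in\bc{T}_{\bc{J}_r}}k(x),\max_{x\in\bc{T}_{\bc{J}_r}}k(x)]$.

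Applying \cref{th:lemma2} with the subset $\bc{J}_r$ in place of $\bc{J}$ then produces at least $|\bc{J}_r|$ eigenvalues of $\alg{L}^{-1}\alg{A}$ lying in $I_r$, counted with multiplicity. Since the $I_r$ are pairwise disjoint, no eigenvalue can be counted for two different $r$, so $\bar{k}(\bc{T}_\bc{J}) = \bigcup_{r=1}^m I_r$ contains at least $\sum_{r=1}^m |\bc{J}_r| = p$ eigenvalues, which is \cref{eq:corr}. Finally, choosing $\bc{J} = \{1,\ldots,N\}$ forces all $N$ eigenvalues of $\alg{L}^{-1}\alg{A}$ into $\bigcup_{j=1}^N k(\bc{T}_j)$, and since a point lies in a union of sets iff it lies in one of them, every eigenvalue belongs to at least one interval $k(\bc{T}_j)$.

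The only genuinely nonroutine point --- the step I would expect to require the most care --- is the identification $I_r = [\min_{x\in\bc{T}_{\bc{J}_r}}k(x),\max_{x\in\bc{T}_{\bc{J}_r}}k(x)]$, i.e., checking that the maximal subintervals of the union $\bar{k}(\bc{T}_\bc{J})$ are exactly the intervals $k(\bc{T}_{\bc{J}_r})$ obtained from the corresponding partition of the supports. Once this is settled, the corollary is an immediate consequence of applying \cref{th:lemma2} to each $\bc{J}_r$ and using the disjointness of the $I_r$; one should only keep in mind that the eigenvalue counts in \cref{th:lemma2} are understood with algebraic multiplicities, consistent with its proof via the perturbation bound.
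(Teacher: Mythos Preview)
Your proposal is correct and follows essentially the same route as the paper: decompose $\bar{k}(\bc{T}_{\bc{J}})$ into its mutually disjoint maximal subintervals, partition $\bc{J}$ accordingly, apply \cref{th:lemma2} to each piece, and sum using disjointness. If anything, you are more explicit than the paper about the identification $I_r = [\min_{x\in\bc{T}_{\bc{J}_r}}k(x),\max_{x\in\bc{T}_{\bc{J}_r}}k(x)]$, which the paper states without comment; this step is indeed straightforward once the $I_r$ are taken to be the connected components of the finite union of closed intervals.
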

\begin{proof}
Since  $\bar{k}(\bc{T}_{j}) = k(\bc{T}_j)$, for any $j\in\bc{J}$, is an interval \cref{eq:range}, the set $\bar{k}(\bc{T}_{\bc{J}})$ consists of at most $p$ intervals.
We decompose $\bar{k}(\bc{T}_{\bc{J}})$ into $\tilde{p}$ mutually disjoint intervals, $\tilde{p}\leq p$, 
\[
\bar{k}(\bc{T}_{\bc{J}_{i}}) \equiv  \bigcup_{j\in\bc{J}_i}k(\bc{T}_j), \quad i = 1,\ldots,\tilde{p}.
\]
\Cref{th:lemma2} then assures that each interval $\bar{k}(\bc{T}_{\bc{J}_{i}})$ contains at least $|\bc{J}_{i}|$ eigenvalues of $\alg{L}^{-1}\alg{A}$.
Summing up, at least $\sum_{i=1,\ldots,\tilde{p}}|\bc{J}_i| = |\bc{J}|$ eigenvalues of $\alg{L}^{-1}\alg{A}$ must be contained in the union $\bar{k}(\bc{T}_{\bc{J}})$.
\end{proof}

{In order to finalize the proof of \cref{th:theorem}, we still need to show the existence of a one-to-one pairing between the individual eigenvalues and the individual intervals $k(\bc{T}_{j}),\ j = 1,\ldots,N$.}
The relationship between the intervals $k(\bc{T}_{j})$, $j = 1,\ldots,N$, and the eigenvalues of $\alg{L}^{-1}\alg{A}$ {described in}  \cref{th:lemma2} and \cref{th:cor} can be represented {by} the following bipartite graph.
Let, as above,
$0< \lambda_1\leq\lambda_2\leq\ldots\leq\lambda_N$ be the eigenvalues of $\alg{L}^{-1}\alg{A}$.
Consider the bipartite graph
\begin{equation}\label{eq:graph}
(\bc{S},\bc{I},E)
\end{equation}
with the sets of nodes $\bc{S} = \bc{I} = \{1,\ldots,N\}$ and the set of edges $E$, where 
\[
\{s,i\}\in E\quad \mbox{if and only if}\quad \lambda_s\in k(\bc{T}_i),\quad s\in\bc{S},\ i\in \bc{I}.
\]
A subset of edges $M\subset E$ is called matching if no edges from $M$ share a common node; see \cite[Section 5.1]{Bondy1976}.
We will use the following famous theorem.

%
\begin{theorem}[Hall's theorem]\label{th:hall}
Let $(\bc{S},\bc{I},E)$ be a bipartite graph. 
{Given $\bc{J}\subset\bc{I}$, let} $G(\bc{J})\subset\bc{S}$ denote the set of all nodes adjacent to any node from $\bc{J}$, i.e.,
\[G(\bc{J}) = \{s\in\bc{S};\ \exists i\in\bc{J} \mbox{ such that } \{s,i\}\in E \}. \]
Then there exists a matching $M\subset E$ that covers $\bc{I}$ if and only if
\begin{equation}
|G(\bc{J})|\geq |\bc{J}|\quad\mbox{for any}\quad \bc{J}\subset\bc{I};
\label{eq:cond}
\end{equation}
see, e.g., \cite[Theorem~5.2]{Bondy1976} and the original  formulation \cite[Theorem~1]{Hall35}.
\end{theorem}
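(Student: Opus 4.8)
The necessity of \cref{eq:cond} is immediate: if a matching $M$ covers $\bc{I}$, then for any $\bc{J}\subset\bc{I}$ the distinct nodes matched to the elements of $\bc{J}$ all lie in $G(\bc{J})$, whence $|G(\bc{J})|\ge|\bc{J}|$. For the sufficiency I would argue by induction on $n=|\bc{I}|$. When $n=1$, condition \cref{eq:cond} applied to $\bc{J}=\bc{I}$ produces a neighbor of the single node of $\bc{I}$, and matching that edge suffices. Assume the claim holds for every bipartite graph $(\bc{S}',\bc{I}',E')$ with $|\bc{I}'|<n$, and suppose $(\bc{S},\bc{I},E)$ with $|\bc{I}|=n$ satisfies \cref{eq:cond}. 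I distinguish two cases according to whether \cref{eq:cond} is ever tight on a nonempty proper subset of $\bc{I}$.

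\emph{Case 1: $|G(\bc{J})|\ge|\bc{J}|+1$ for every nonempty proper subset $\bc{J}\subset\bc{I}$.} Pick any $i\in\bc{I}$ and, using \cref{eq:cond}, any neighbor $s\in G(\{i\})$; tentatively match $i$ to $s$. Delete $i$ from $\bc{I}$ and $s$ from $\bc{S}$. For any $\bc{J}\subset\bc{I}\setminus\{i\}$ the neighborhood in the reduced graph has size at least $|G(\bc{J})|-1\ge|\bc{J}|$, so \cref{eq:cond} still holds; the induction hypothesis yields a matching covering $\bc{I}\setminus\{i\}$, and adding the edge $\{s,i\}$ covers $\bc{I}$.

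\emph{Case 2: there is a nonempty proper subset $\bc{J}_0\subset\bc{I}$ with $|G(\bc{J}_0)|=|\bc{J}_0|$.} The subgraph induced on $\bc{J}_0$ and $G(\bc{J}_0)$ satisfies \cref{eq:cond}, so by the induction hypothesis (note $|\bc{J}_0|<n$) it has a matching $M_1$ covering $\bc{J}_0$, which necessarily uses up all of $G(\bc{J}_0)$. Now consider the subgraph $H$ on $\bc{I}\setminus\bc{J}_0$ and $\bc{S}\setminus G(\bc{J}_0)$. For $\bc{J}\subset\bc{I}\setminus\bc{J}_0$, its neighborhood in $H$ equals $G(\bc{J}\cup\bc{J}_0)\setminus G(\bc{J}_0)$, so it has size $|G(\bc{J}\cup\bc{J}_0)|-|G(\bc{J}_0)|\ge|\bc{J}\cup\bc{J}_0|-|\bc{J}_0|=|\bc{J}|$; hence $H$ satisfies \cref{eq:cond}, and since $|\bc{I}\setminus\bc{J}_0|<n$ it has a matching $M_2$ covering $\bc{I}\setminus\bc{J}_0$. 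The matchings $M_1$ and $M_2$ use disjoint sets of nodes, so $M_1\cup M_2$ is a matching covering $\bc{I}$, completing the induction.

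The step I expect to be the main obstacle is verifying that \cref{eq:cond} is inherited by the two reduced graphs in Case~2; this is exactly where the tightness $|G(\bc{J}_0)|=|\bc{J}_0|$ is needed, via the identity that the $H$-neighborhood of $\bc{J}$ equals $G(\bc{J}\cup\bc{J}_0)\setminus G(\bc{J}_0)$, so that the deficiency incurred by removing $G(\bc{J}_0)$ is precisely compensated by removing $\bc{J}_0$. As a fallback I would use the augmenting-path argument: start from any matching and, while some node of $\bc{I}$ is unmatched, grow an alternating tree from it; either an augmenting path appears (augment and repeat) or the set of $\bc{I}$-nodes reached violates \cref{eq:cond}, a contradiction. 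For the present application only the ``if'' direction is used, and this second approach has the advantage of being constructive.
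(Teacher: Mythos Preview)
Your proof is correct and is one of the standard textbook arguments for Hall's theorem (the Halmos--Vaughan induction with the ``slack'' and ``tight'' cases, together with the augmenting-path alternative you sketch as a fallback). There is nothing to compare against, however: the paper does not supply its own proof of \cref{th:hall}. The theorem is stated as a classical result and simply referenced to \cite[Theorem~5.2]{Bondy1976} and \cite[Theorem~1]{Hall35}; it is then used as a black box to pass from \cref{th:cor} (which verifies condition~\cref{eq:cond} for the bipartite graph~\cref{eq:graph}) to the existence of the permutation $\pi$ in \cref{th:theorem}. Your write-up would serve perfectly well as a self-contained appendix proof, and your remark that only the ``if'' direction is needed here is accurate.
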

\noindent Now we are ready to finalize our argument.
\subsection*{Proof of \cref{th:theorem}}
Consider  the bipartite graph {defined by} \cref{eq:graph} and let $G(\bc{J})\subset\bc{S}$ be the set of all nodes {(representing the eigenvalues)} adjacent to any node from $\bc{J}$, $\bc{J}\subset\bc{I}$ {(representing the intervals)}.
In other words, $G(\bc{J})$ represents the indices  of all eigenvalues $\{\lambda_s;\ s\in G(\bc{J})\}$ located in $\bar{k}(\bc{T}_{\bc{J}}) = \cup_{j\in\bc{J}}k(\bc{T}_{j})$.
\Cref{th:cor} of \cref{th:lemma2} {assures that  assumption \cref{eq:cond} in \cref{th:hall} is satisfied, i.e. }
\begin{equation}\label{eq:graph:cond}
|G(\bc{J})|\geq |\bc{J}|.
\end{equation}
{Thus, according to} \cref{th:hall}, there exist a matching $M\subset E$ that covers $\bc{I}$.
Since $|\bc{I}|= |\bc{S}|$, this matching defines the permutation $\pi(i),\ i = 1,2,\ldots,N$, such that
\[
\lambda_{\pi(i)} \in k(\bc{T}_i),\quad i=1,\ldots,N,
\]
which finishes the proof.
\qed


\section{Numerical experiments}
\label{experiments}
In this section we will illustrate the theoretical results by a series of numerical experiments. 
We will investigate how well the nodal values of $k$ correspond to the eigenvalues and assess the sharpness of the estimates in \cref{th:taylor} in a few examples, 
including both uniform and local mesh refinement. Furthermore, we will compute the corresponding intervals  $k(\bc{T}_j),\ j = 1,\ldots,N$ and consider the pairing in \cref{th:theorem}.    

\subsubsection*{Test problems}
We will consider four test problems defined  on the domain $\Omega \equiv (0,1)\times(0,1)$ where we slightly abuse the notation above
and let $k=k(x,y)$. The first three problems use a continuous coefficient function $k(x,y)$:
\begin{eqnarray*}
	\mbox{(P1)}\quad & k(x,y) &= \sin(x+y), \\
	\mbox{(P2)}\quad & k(x,y) &=  1+50\exp(-5(x^2+y^2)), \\
	\mbox{(P3)}\quad & k(x,y) &= 2^7(x^7+y^7).
\end{eqnarray*}
The fourth problem uses a discontinuous function $k(x,y)$,
\begin{align*}
\mbox{(P4)}\quad  k(x,y)\ =& 
\begin{cases}
	\mbox{(P1)}\quad \mbox{for}\ (x,y)\in (0,1)\times (\tfrac{1}{2},1),\\
	\mbox{(P2)}\quad \mbox{elsewhere}.
\end{cases}
\end{align*}
Numerical experiments were computed  using FEniCS~\cite{2012Fenics}  and Matlab.\footnote{FEniCS version 2017.2.0 and MATLAB Version: 8.0.0.783 (R2012b).} If not specified otherwise, we consider a triangular uniform mesh with piecewise linear discretization basis functions.

\subsection{Illustration of \cref{th:theorem} and \cref{th:taylor}}
\label{sec:illustration}
In \cref{fig:figure2} we show the nodal values of $k(x,y)$ and the corresponding eigenvalues, both sorted
in increasing order on the unit square with $N=81$ degrees of freedom.  
Clearly, there is a close correspondence between the nodal values and the eigenvalues even at this relatively coarse
resolution, but there are some notable differences for (P3) and (P4) that are clearly visible.

\begin{figure}[!ht]
\centering
\includegraphics[width=.495\textwidth]{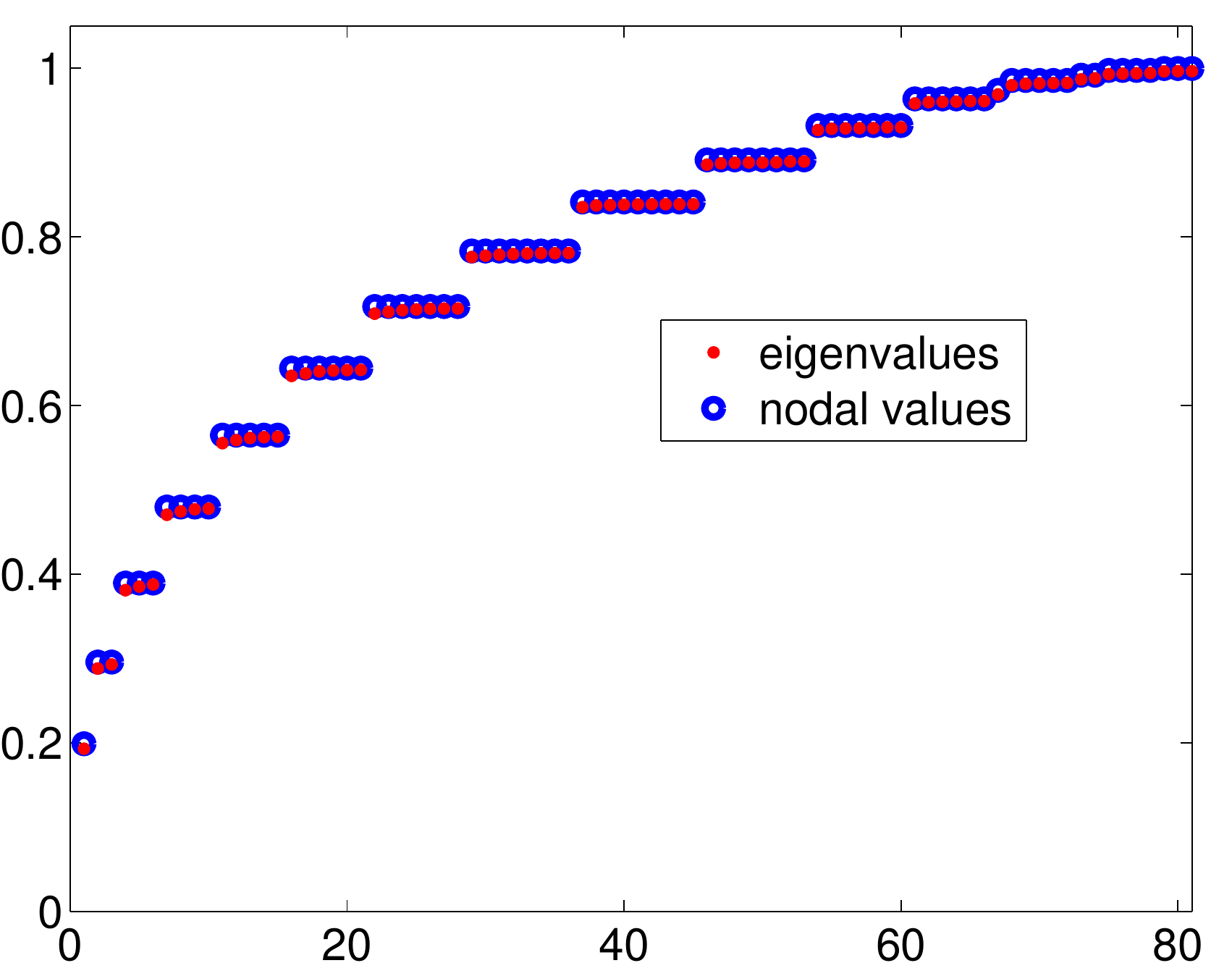}
\includegraphics[width=.495\textwidth]{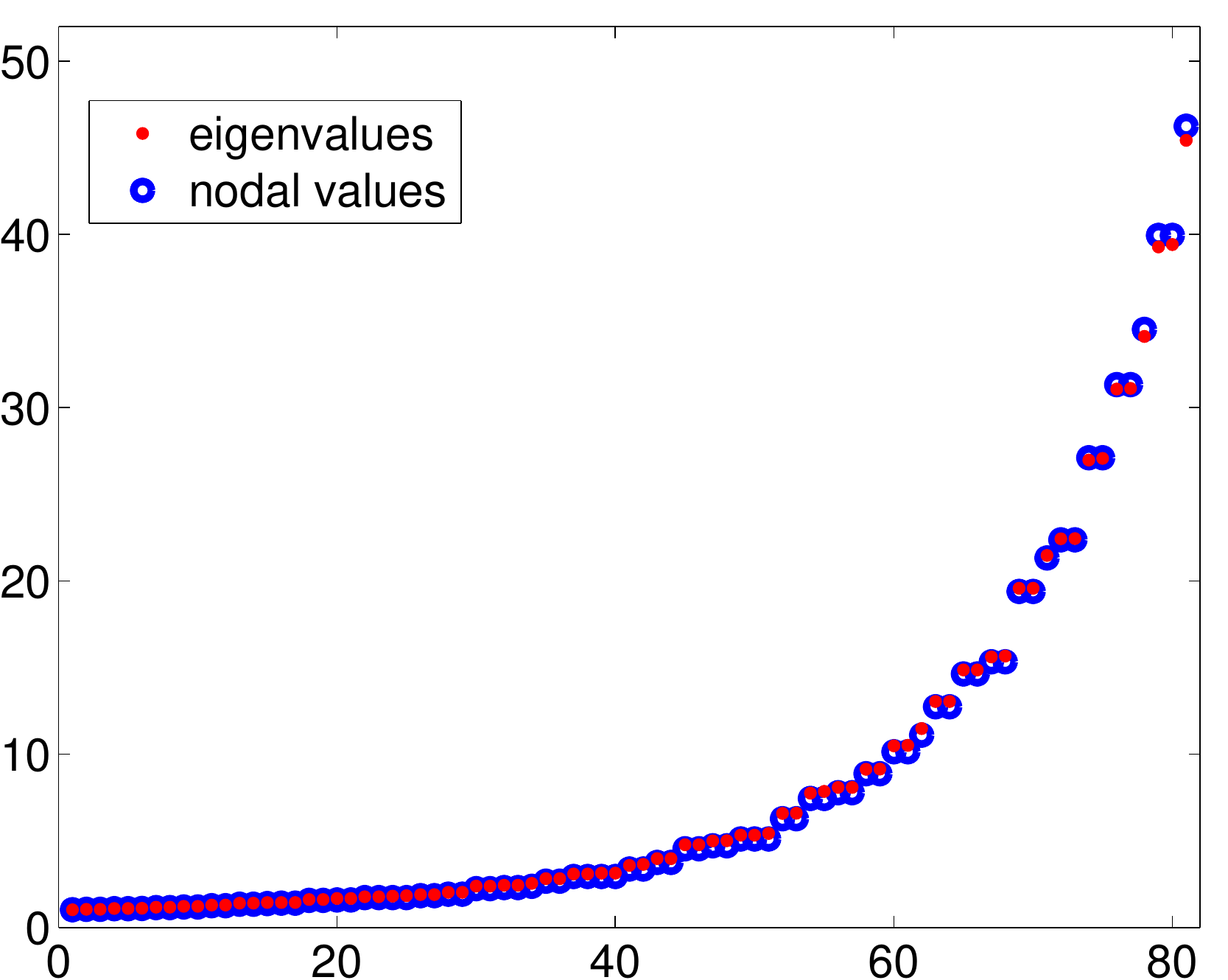}

\includegraphics[width=.495\textwidth]{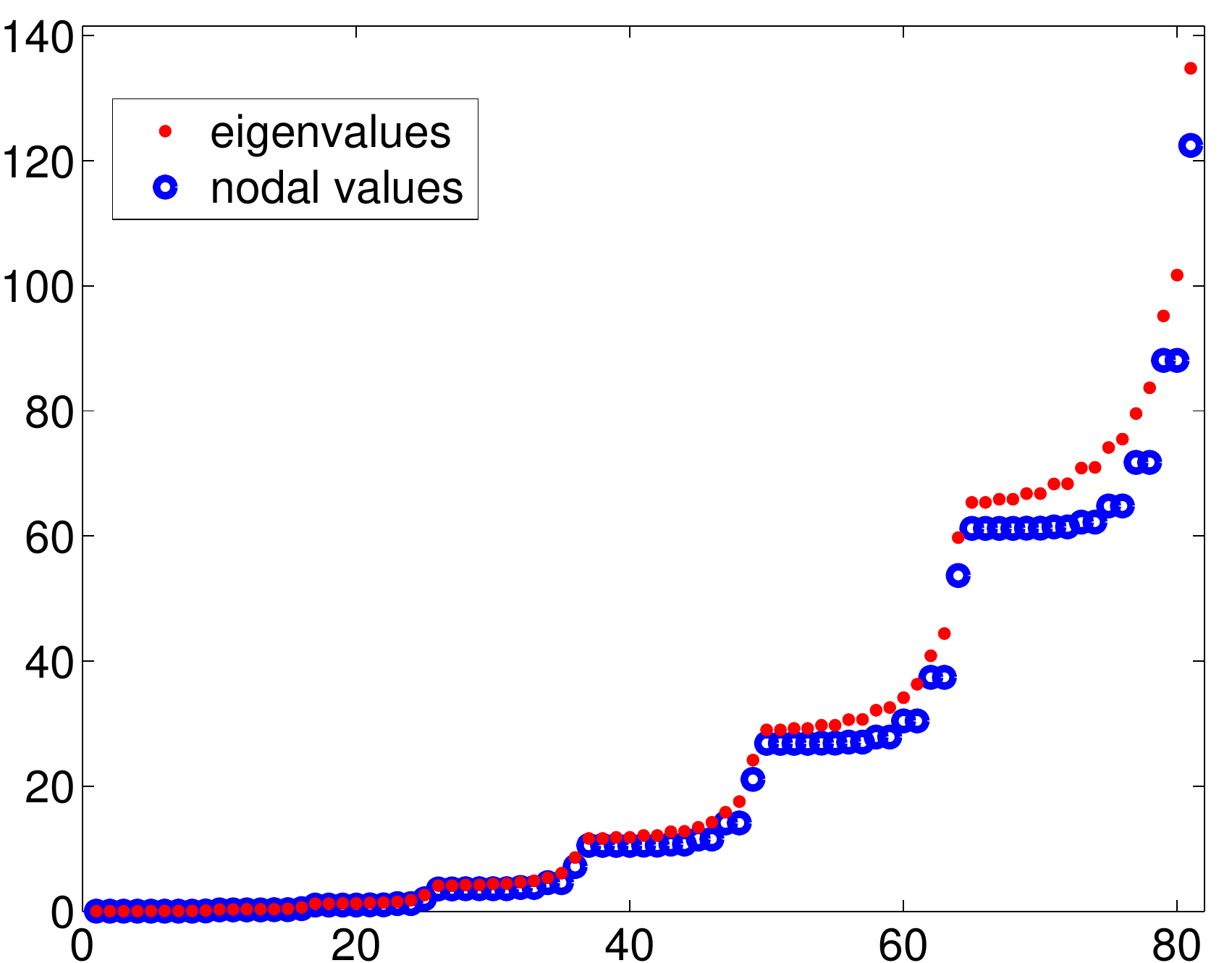}
\includegraphics[width=.495\textwidth]{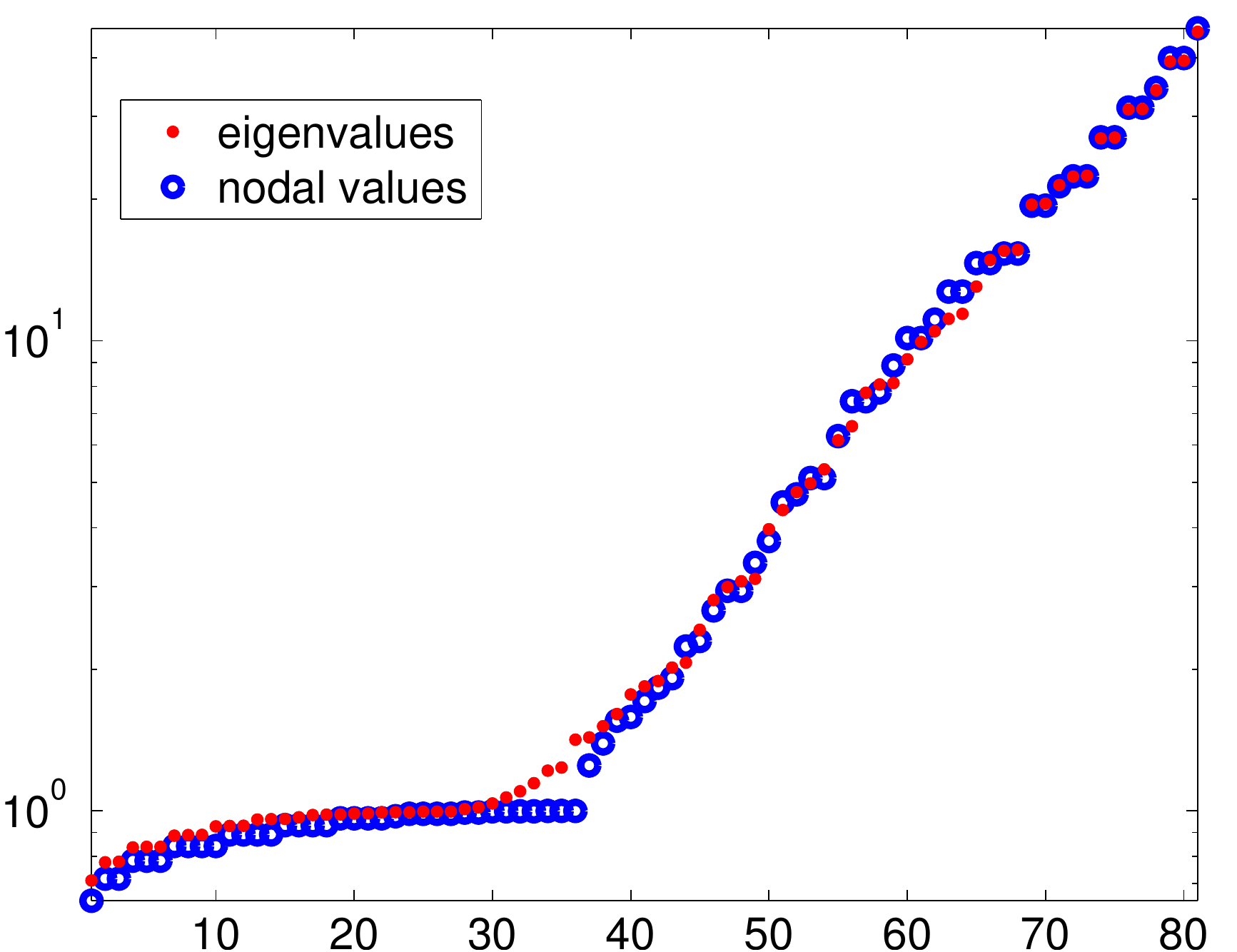}
	\caption{Comparison of the eigenvalues $\lambda_s$, $s = 1,\ldots,N$ (red dots) and the increasingly sorted nodal values of $k$ (blue circles). Top left: (P1), top right: (P2), bottom left: (P3), bottom right:  (P4). As in \cref{fig:figure}, we use the semilogarithmic scale in the lower right panel (P4).}
\label{fig:figure2}
\end{figure}

\Cref{th:theorem} states that there exists a pairing $\pi$ such that $\lambda_{\pi(i)}\in k(\bc{T}_{i})$ for every $i = 1,\ldots,N$. The proof is not constructive 
and it is therefore interesting to consider potential pairings. 
In \cref{fig:figure} we show the results of the previously mentioned paring of the eigenvalues and the intervals $k(\bc{T}_i)~=~k(\bc{T(x_i,y_i)})$ where the vertices $(x_i,y_i)$ have been sorted such that the nodal values $k(x_i, y_i)$ are in increasing order. The pairing appears to work quite well except for the case (P4) where in particular the eigenvalues between 30-40  
are outside the intervals provided by this pairing. 
\begin{figure}[h!t]
\centering
\includegraphics[width=.495\textwidth]{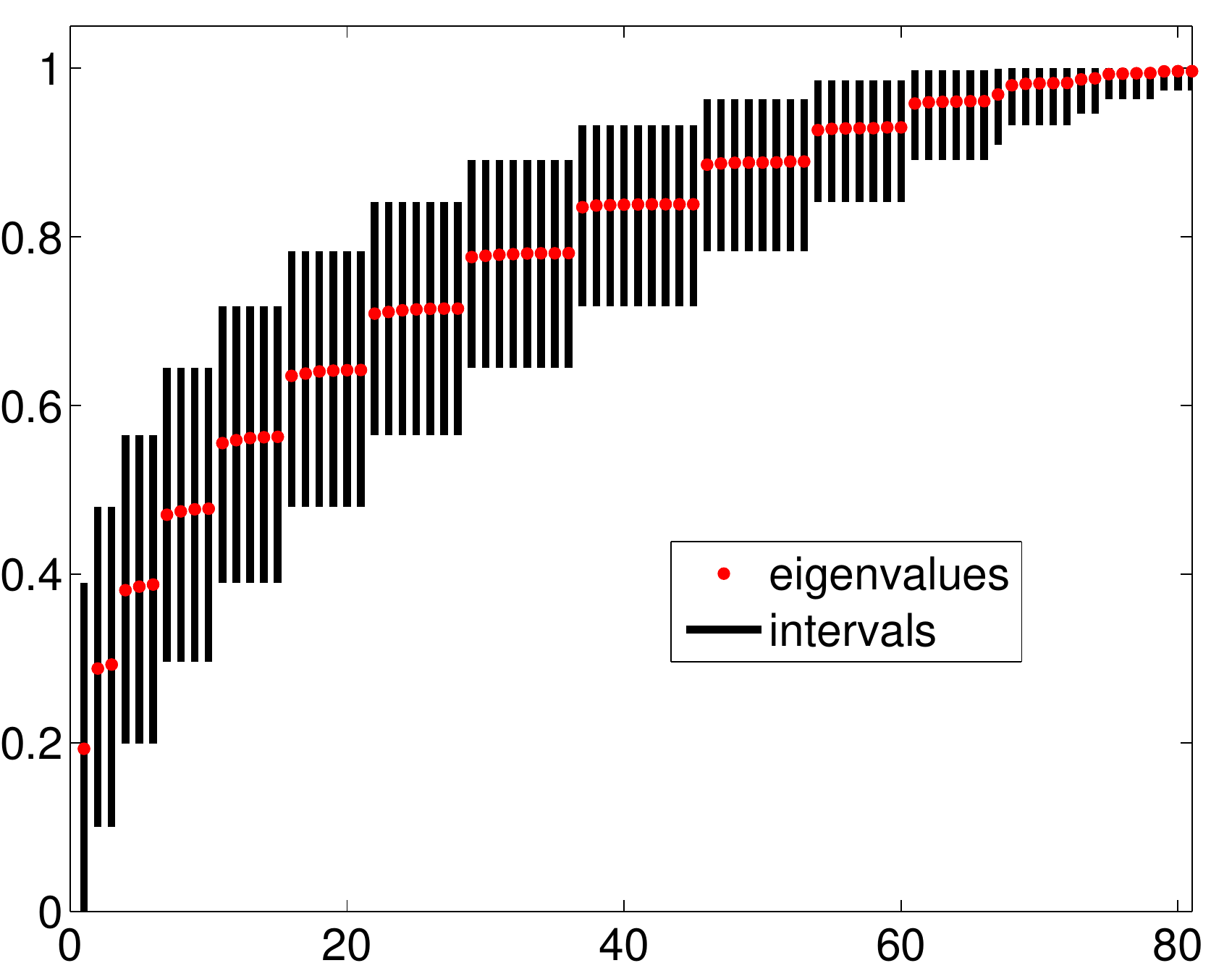}
\includegraphics[width=.495\textwidth]{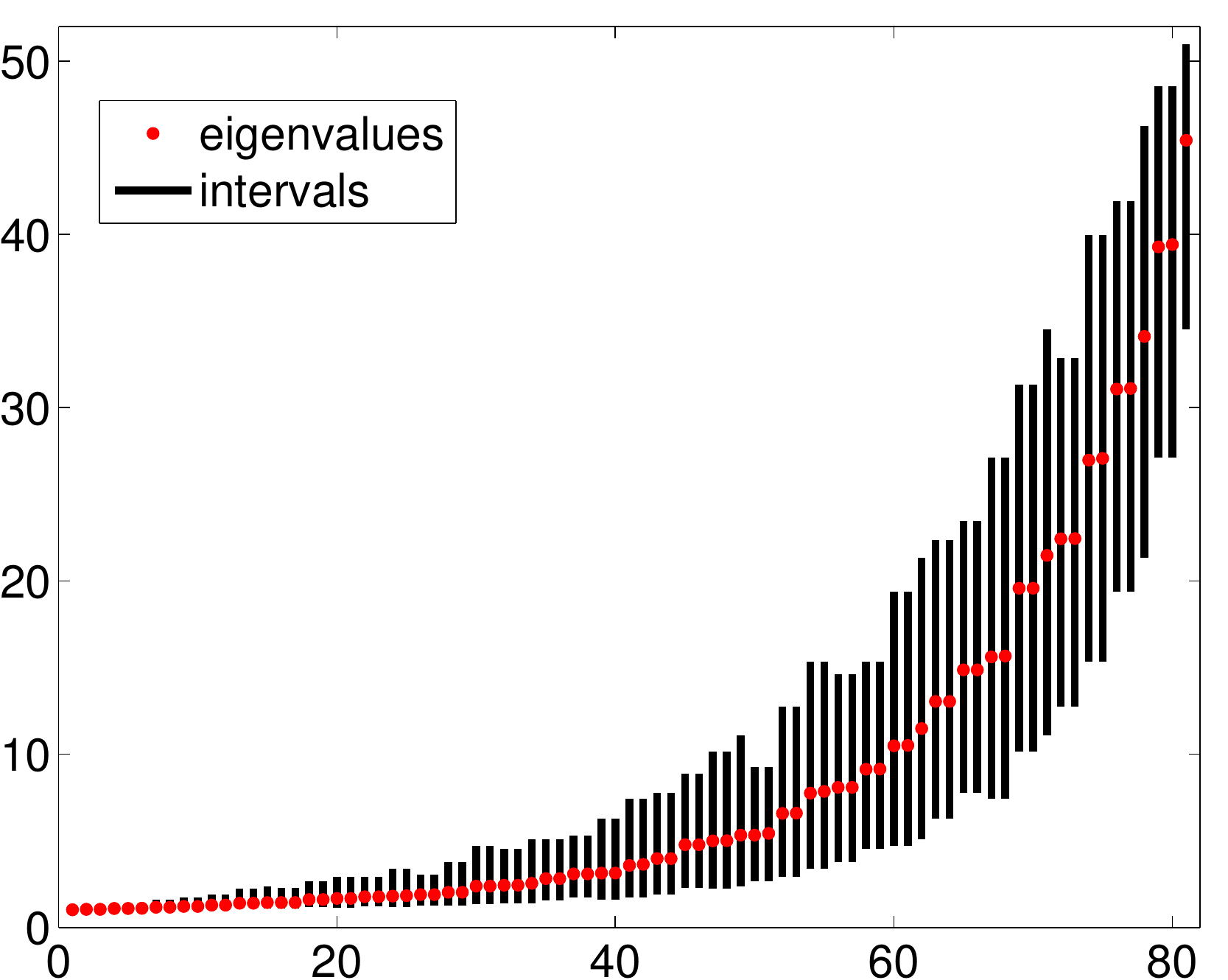}

\includegraphics[width=.495\textwidth]{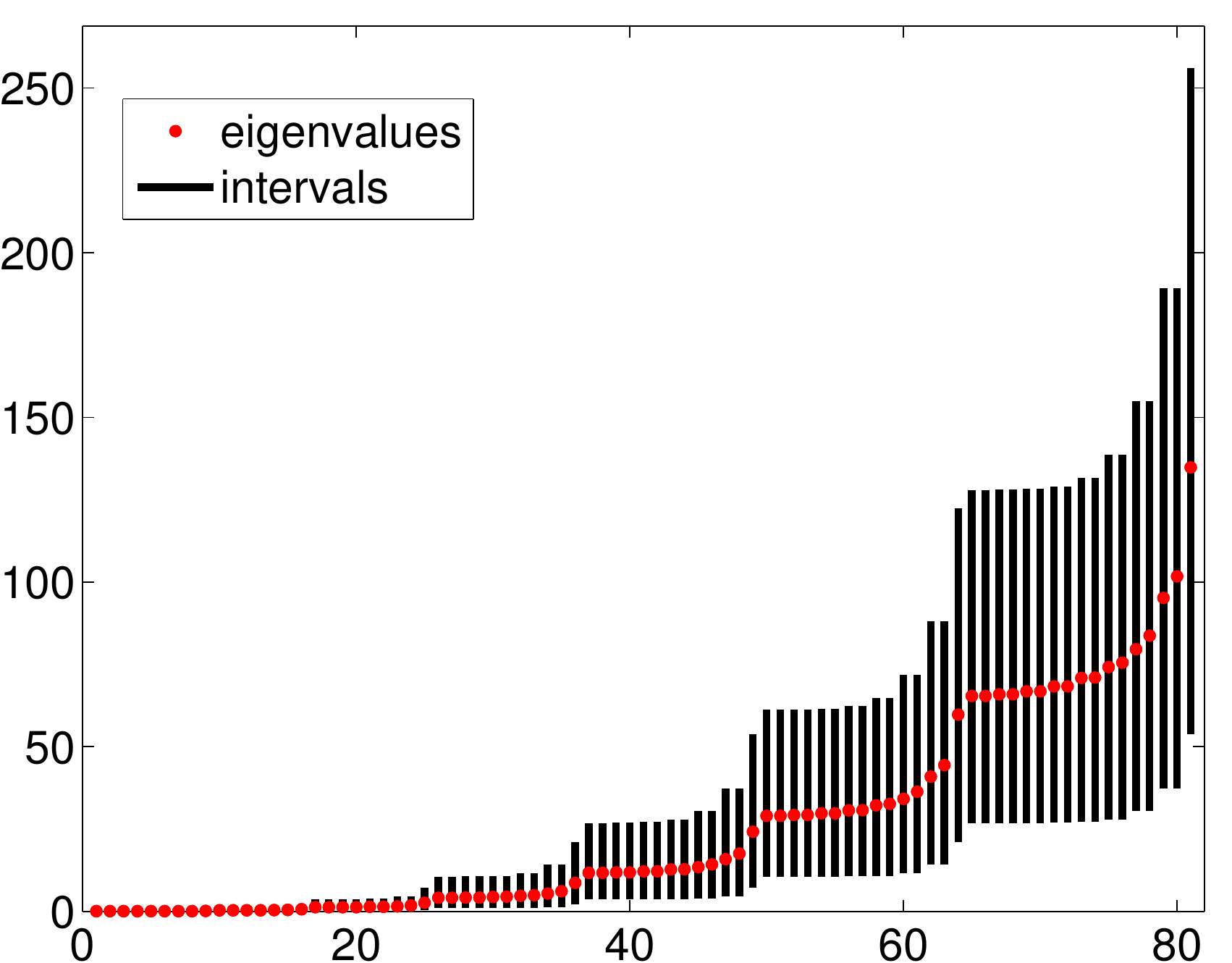}
\includegraphics[width=.495\textwidth]{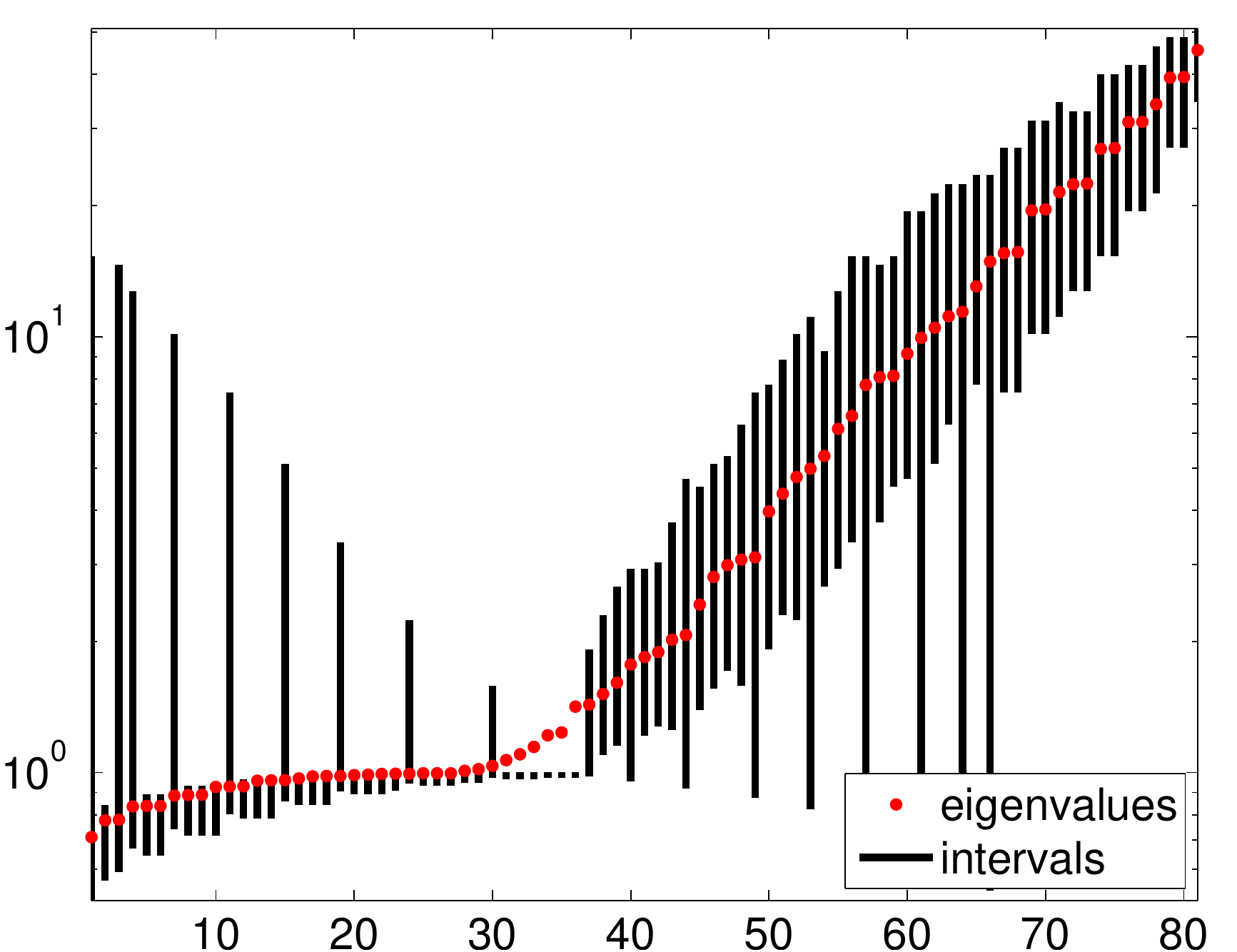}
	\caption{The eigenvalues $\lambda_1\leq\ldots\leq\lambda_N$ (red dots) and the associated intervals $k(\bc{T}_{P(s)})$ (black vertical lines), where the pairing $P$ is defined by the increasingly sorted nodal values of $k$; see \cref{fig:figure2}. Top left: (P1), top right: (P2), bottom left: (P3), bottom right:  (P4). We observe that for (P4) some of the eigenvalues are not inside the associated intervals and therefore the ordering in which eigenvalues and nodal values of $k$ are in increasing order does not in this case conform to $\pi$ from \cref{th:theorem}.
}
\label{fig:figure}
\end{figure}

\begin{figure}[h!t]
\centering
\includegraphics[width=.495\textwidth]{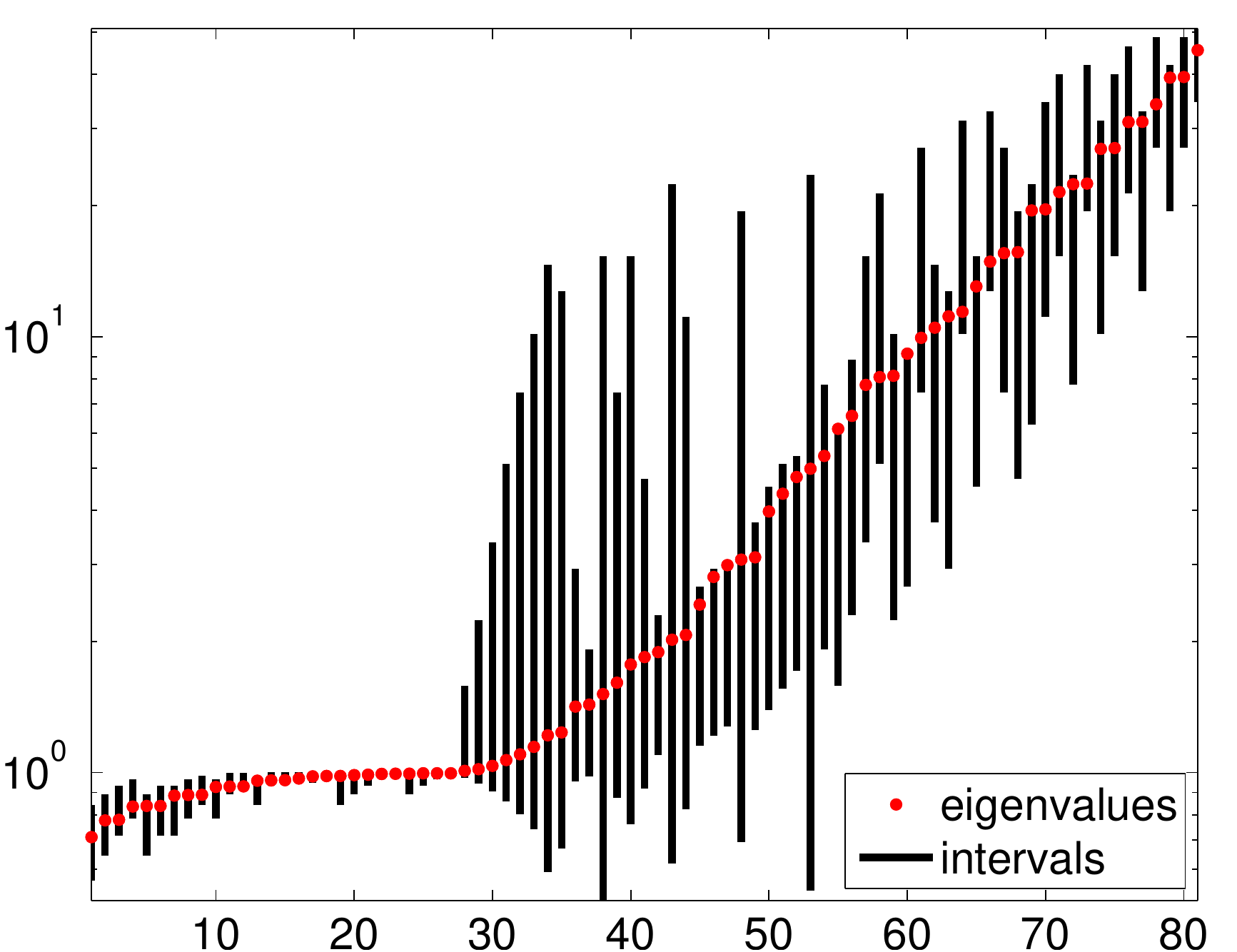}
\includegraphics[width=.495\textwidth]{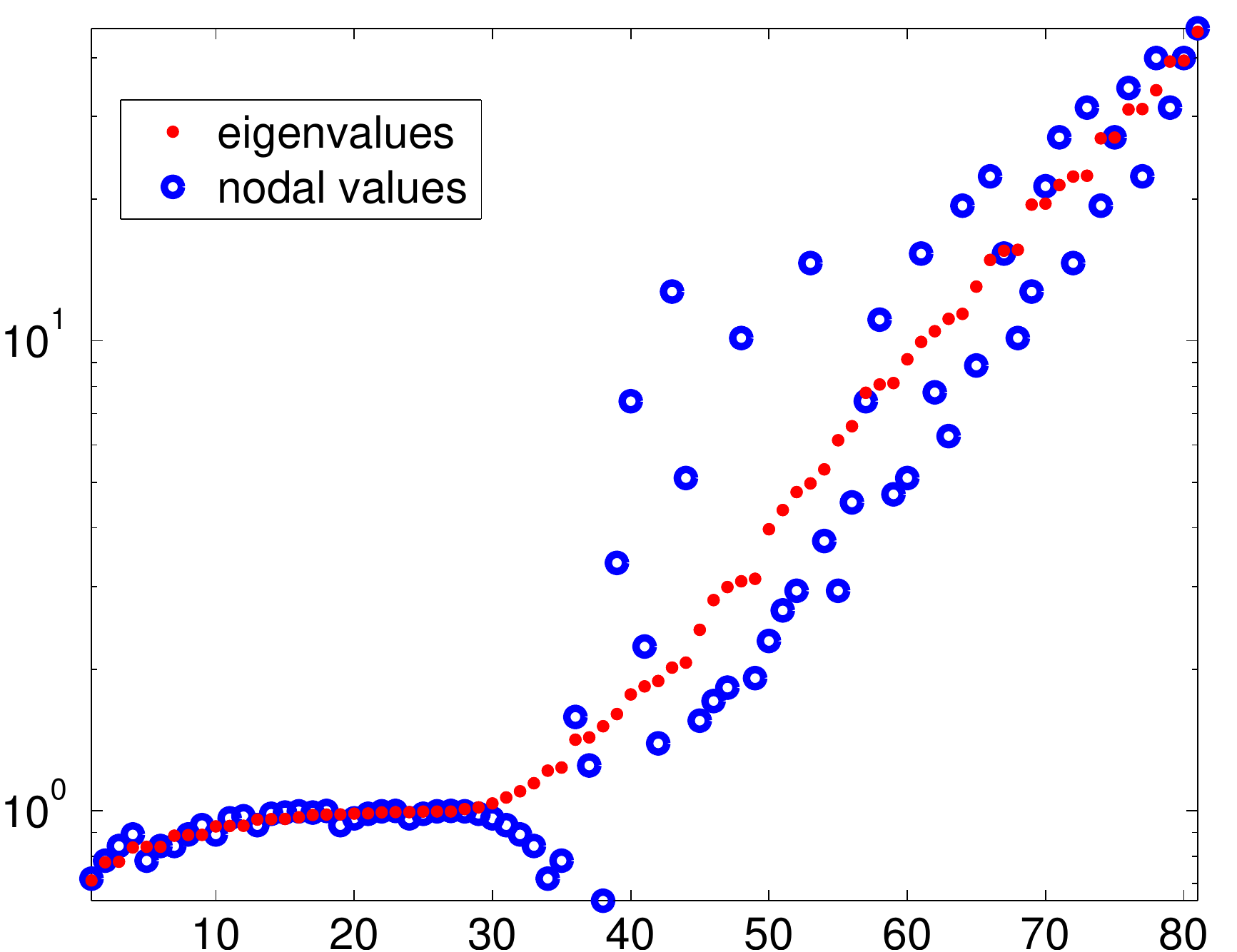}
\caption{Illustration of the pairing $\pi$ computed by the Dulmage-Mendelsohn decomposition of the corresponding adjacency matrix $\alg{G}$ (see \cref{eq:G}) for problem (P4). Left: The eigenvalues $\lambda_1\leq\ldots\leq\lambda_N$ (red dots) and the associated intervals $k(\bc{T}_{\pi^{-1}(s)})$ (black vertical lines). Right: The comparison of the eigenvalues and the associated nodal values $k_{\pi^{-1}(s)}$ (blue circles).}
\label{fig:disc}
\end{figure}
In order to ensure that we employ a proper pairing, i.e., to guarantee that $\lambda_{\pi(i)} \in k(\bc{T}_i)$, $i = 1,\ldots,N$, we construct the adjacency matrix $\alg{G}$ such that
\begin{equation}
\alg{G}_{si}  =
	\begin{cases}
		1,\quad \lambda_s \in k(\bc{T}_i), \\
		0,\quad \lambda_s \notin k(\bc{T}_i).
	\end{cases}
\label{eq:G}			
\end{equation}
By using the Dulmage-Mendel\-sohn decomposition\footnote{See, e.g., the original paper \cite{DulMen58}.} of this adjacency matrix $\alg{G}$ (provided by the Matlab command \texttt{dmperm}) we get a pairing $\pi$ satisfying $\lambda_{\pi(i)}\in k(\bc{T}_{i})$ for every $i = 1,\ldots,N$.
\Cref{fig:disc} illustrates the pairing $\pi$ from \cref{th:theorem} for (P4)  and the approximation of the eigenvalues by the associated nodal values (the plots in \cref{fig:disc}  should be compared with the lower right panels of \cref{fig:figure2,fig:figure}).

The difference between the nodal values and the corresponding eigenvalues is estimated in \cref{eq:h_taylor} and to assess the
sharpness of this estimate,  
\cref{fig:figure3} compares the quantities $|\lambda_s - k_{\pi^{-1}(s)}|$ (red dots) with the first term on the right hand side of \cref{eq:h_taylor} (black stars). We observe that  the first term of \cref{eq:h_taylor} in general overestimate the differences at this coarse resolution.
\begin{figure}[!ht]
\centering
\includegraphics[width=.495\textwidth]{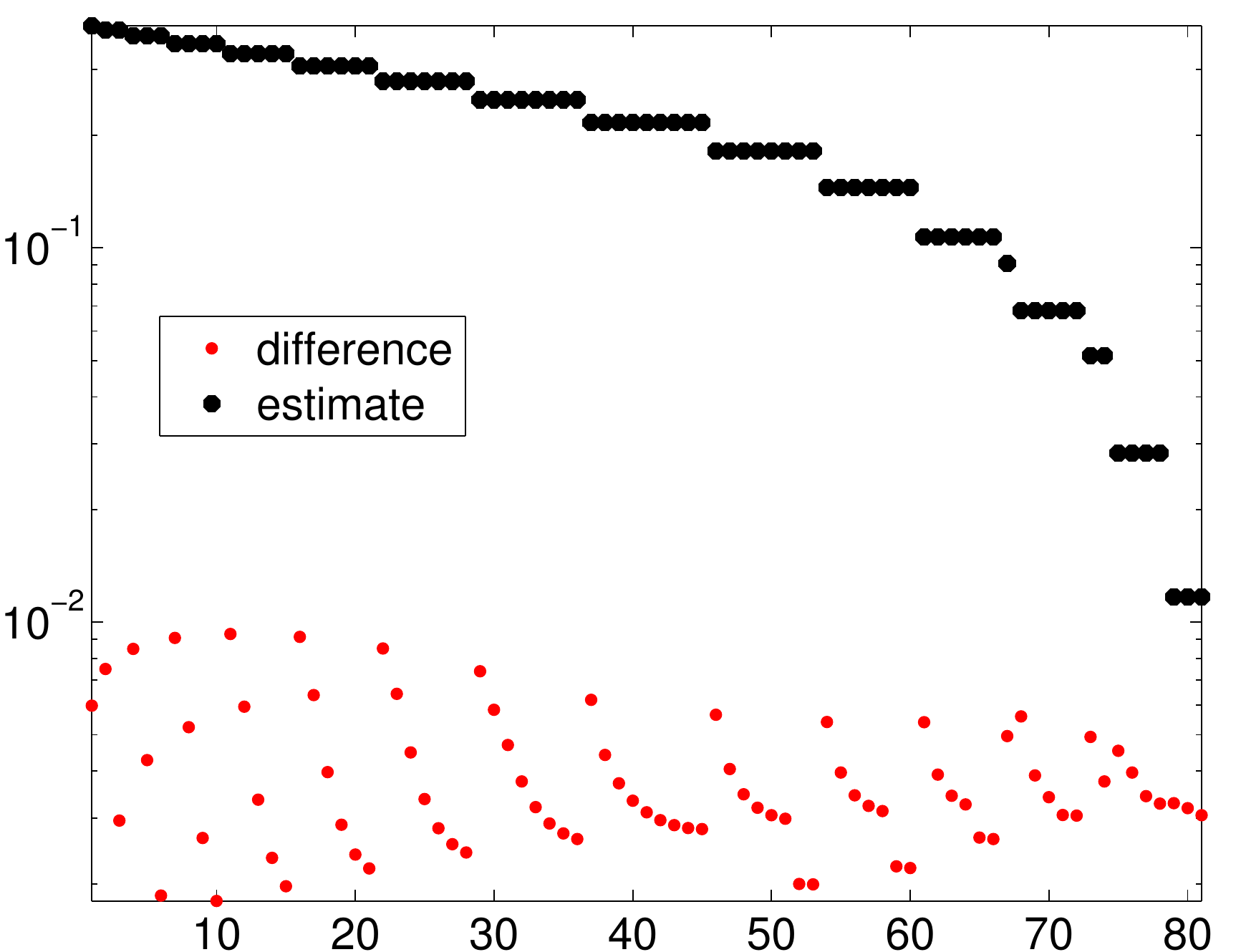}
\includegraphics[width=.495\textwidth]{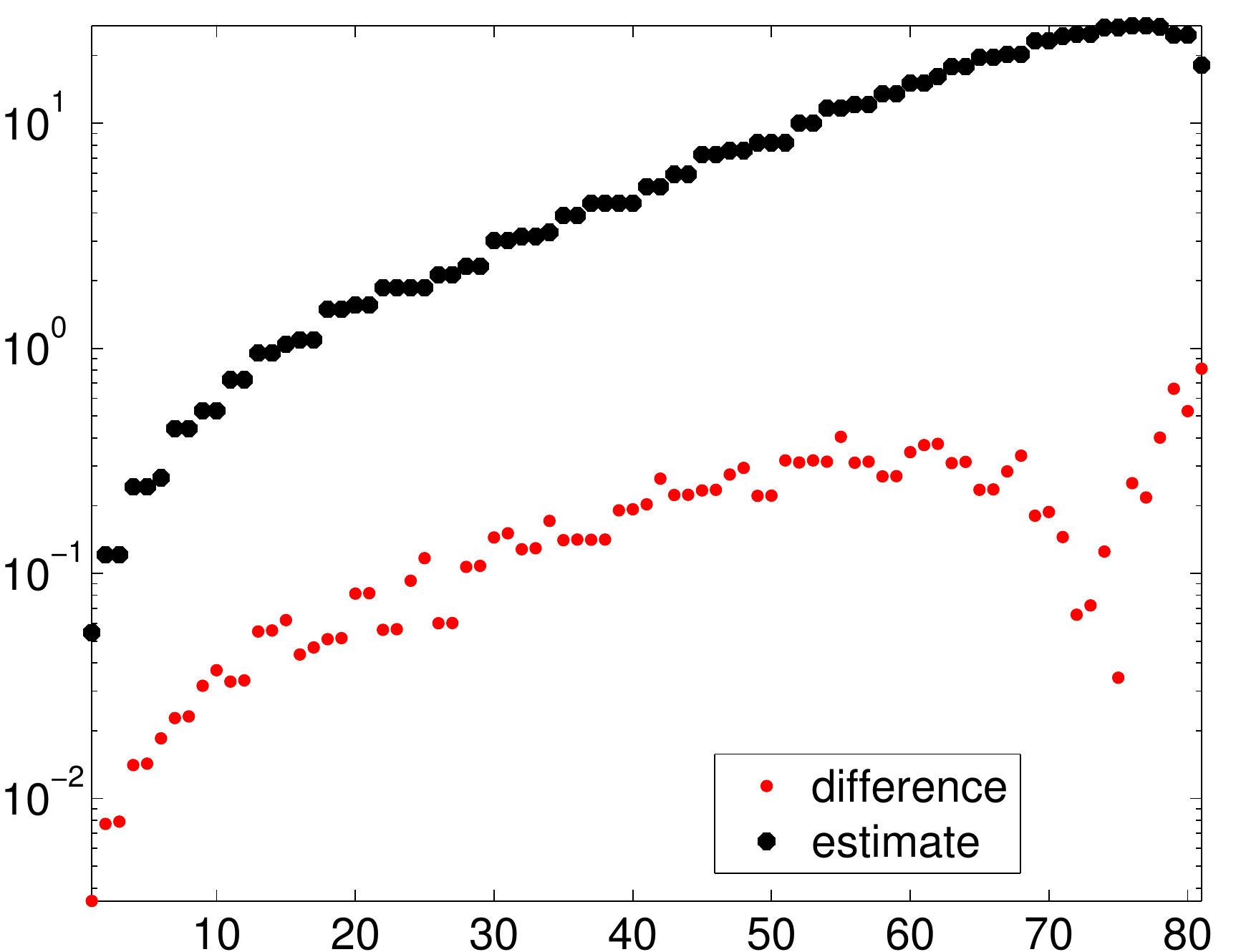}

\includegraphics[width=.495\textwidth]{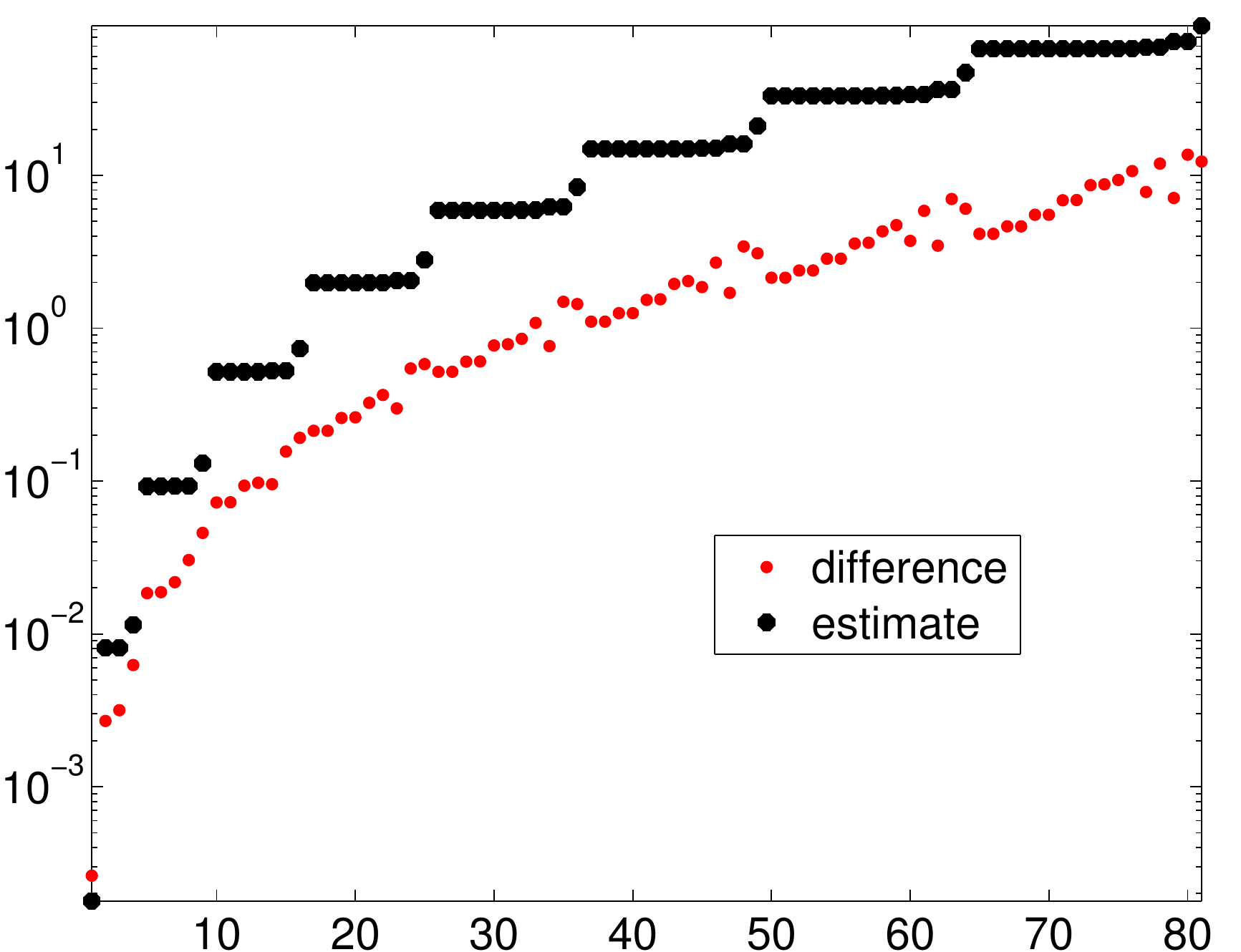}
\includegraphics[width=.495\textwidth]{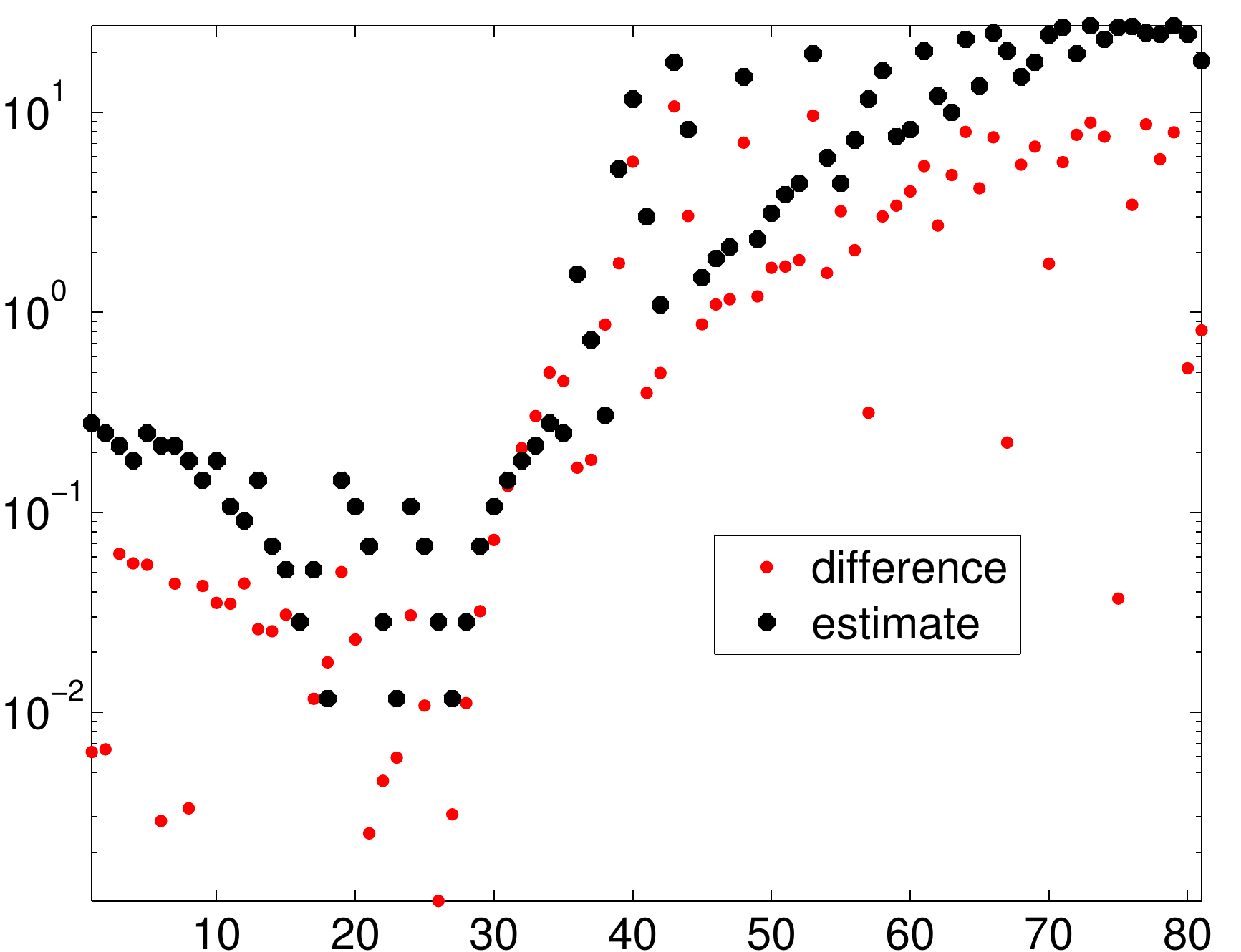}
\caption{Illustration of \cref{th:taylor}. Comparison of the absolute difference  $|\lambda_s - k_{\pi^{-1}(s)}|$ (red dots) and its estimate by the first term on the right hand side of \cref{eq:h_taylor} (black stars).  Top left: (P1), top right: (P2), bottom left: (P3), bottom right:  (P4).}
\label{fig:figure3}
\end{figure}

\subsection{Effects of $h$-adaptivity}
\label{sec:adaptivity}
{\Cref{th:taylor} states that the estimated difference $|\lambda_s~-~k_{\pi^{-1}(s)}|$ improves at least linearly as the mesh is refined.  \Cref{fig:fine:int} shows the improvement of both the nodal value estimates of $k$ and the associated intervals $k(\bc{T}_{\pi^{-1}(s)})$  for problems (P1) and (P3) with $N=59^2=3481$ degrees of freedom.}
(We would also like to note that the proof of \cref{th:taylor} does not assume linear Lagrange elements, but holds for any type of nodal basis functions.)
\begin{figure}[h!t]
\centering
\includegraphics[width=.495\textwidth]{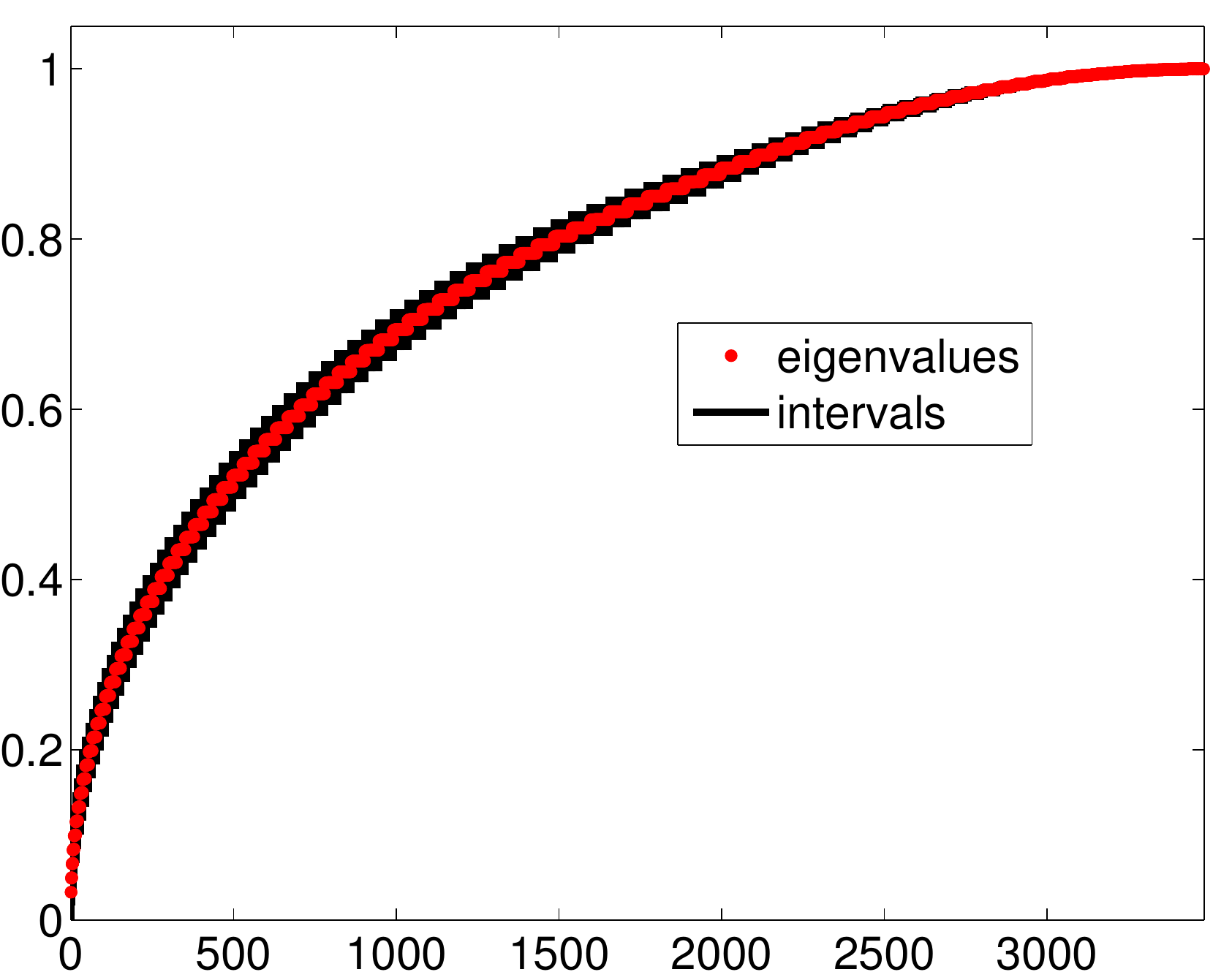}
\includegraphics[width=.495\textwidth]{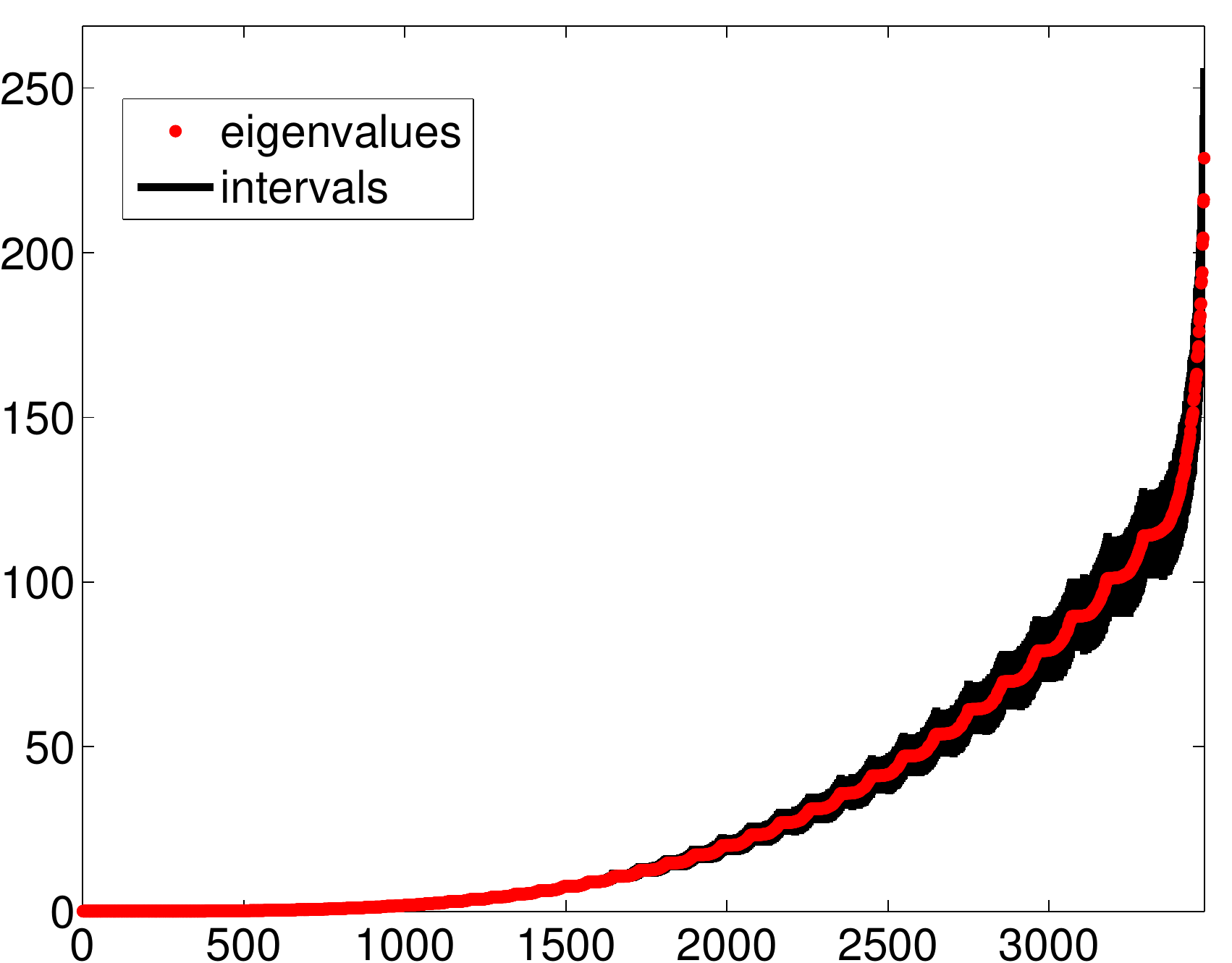}

\includegraphics[width=.495\textwidth]{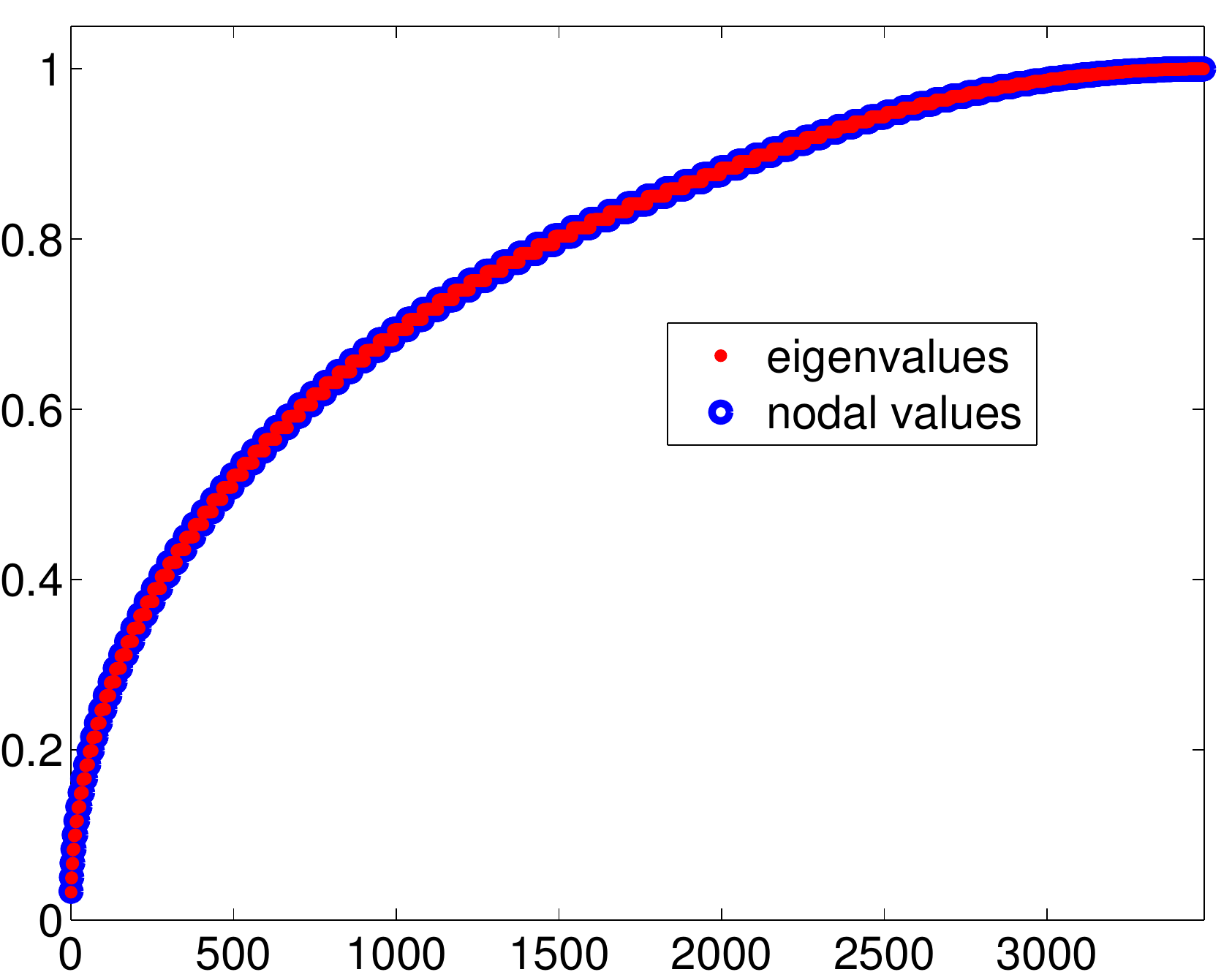}
\includegraphics[width=.495\textwidth]{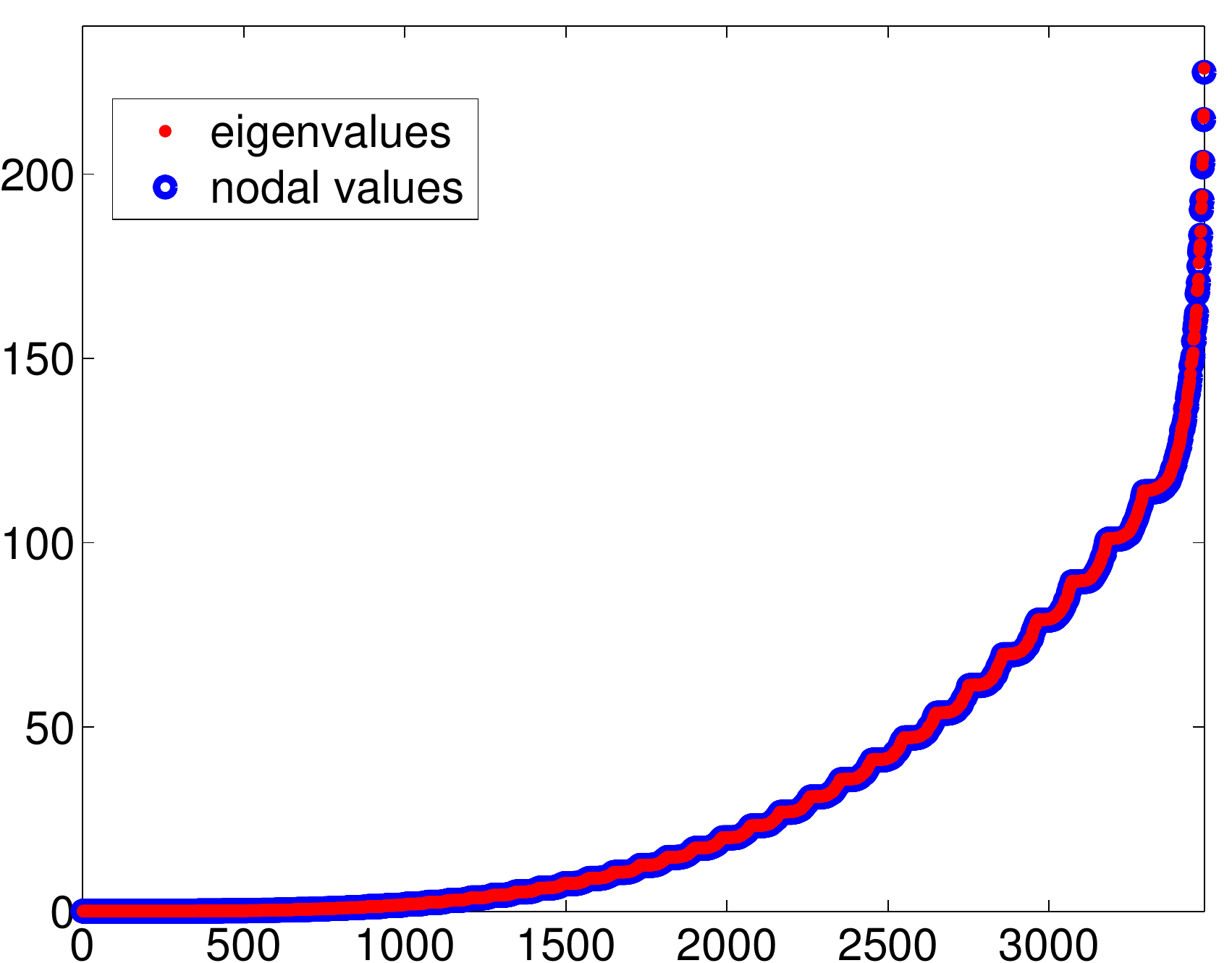}
\caption{Top: The eigenvalues $\lambda_s$, $s = 1,\ldots,N$ (red dots) and the associated intervals $k(\bc{T}_{\pi^{-1}(s)})$ (black vertical lines). Bottom: Comparison of the eigenvalues $\lambda_s$ and the nodal values $k_{\pi^{-1}(s)}$ (blue circles). Here we use uniform mesh with $N = 3481$ degrees of freedom. Left: (P1), right: (P3). We can observe a dramatic improvement of the approximation accuracy, cf. \cref{fig:figure2,fig:figure}.}
\label{fig:fine:int}
\end{figure}

{\Cref{th:taylor} is a local estimate which allows local mesh refinement for improving accuracy of the eigenvalue estimate.}  
To see the effect of locally refined mesh on the spectrum of the preconditioned problem, we consider the
test problem (P2), where we refine the mesh in the subdomain $[0,0.2]\times[0,0.2]$, i.e., in the area with large gradient of the function {$k(x,y)$}. \Cref{fig:adapt}  shows the discretization mesh (top), the {eigenvalues} with the associated intervals (middle)  and the associated nodal values (bottom). As expected, we observe more eigenvalues in the upper part of the spectrum as well as their better localization; see for comparison also the top right panels of \cref{fig:figure2,fig:figure}. 
\begin{figure}[h!t]
\centering
\includegraphics[width=.495\textwidth]{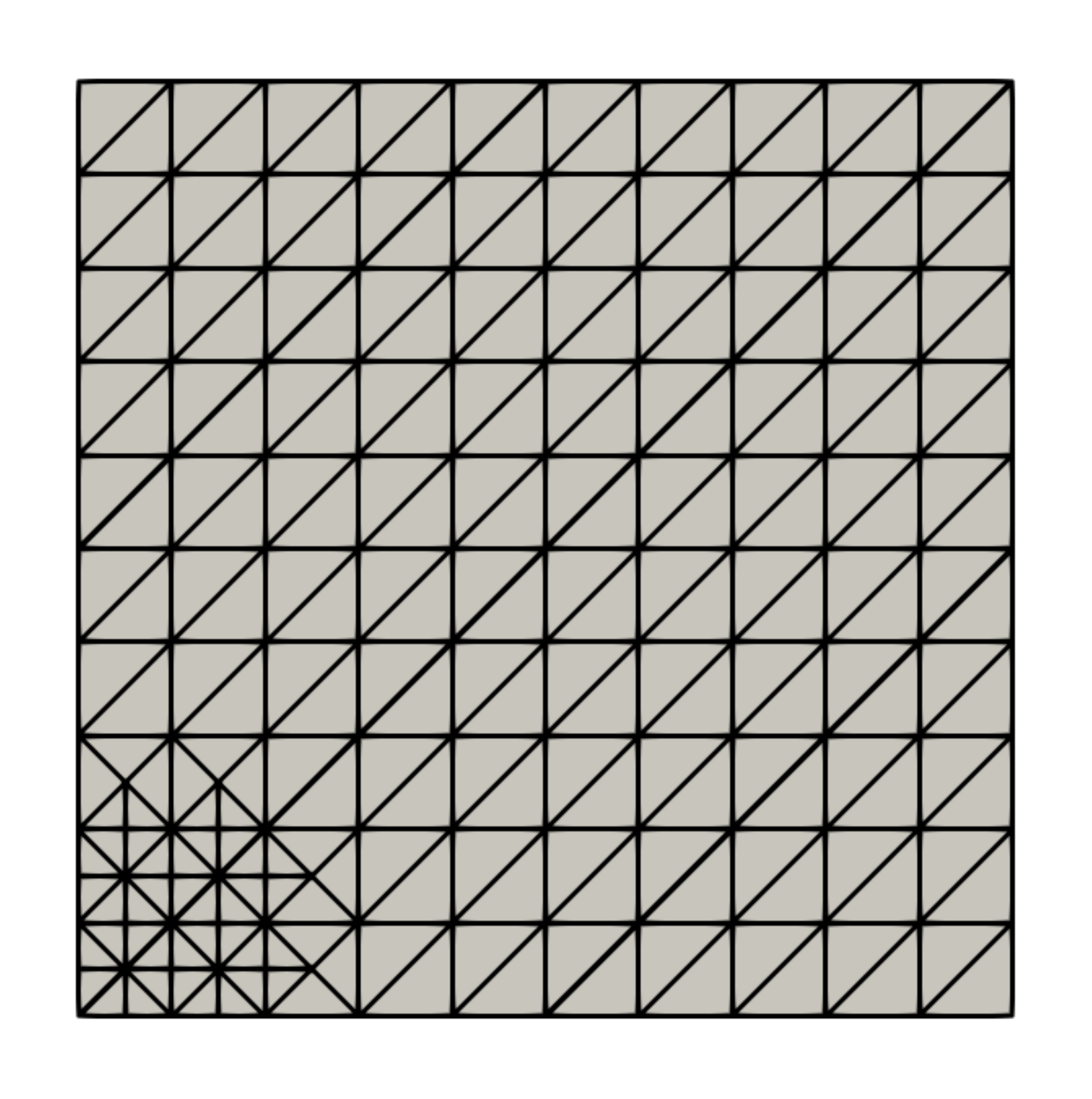}
\includegraphics[width=.495\textwidth]{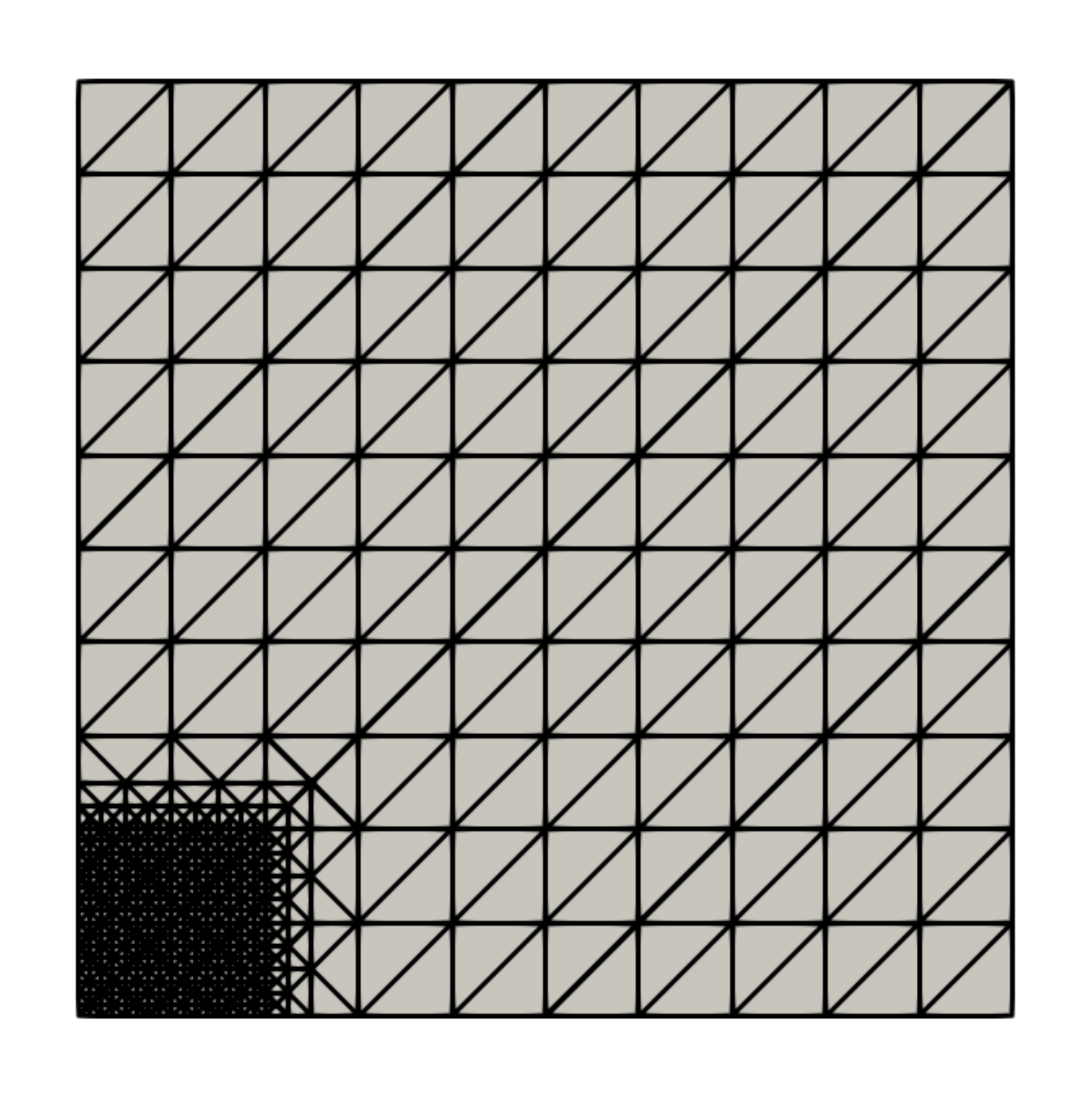}

\includegraphics[width=.495\textwidth]{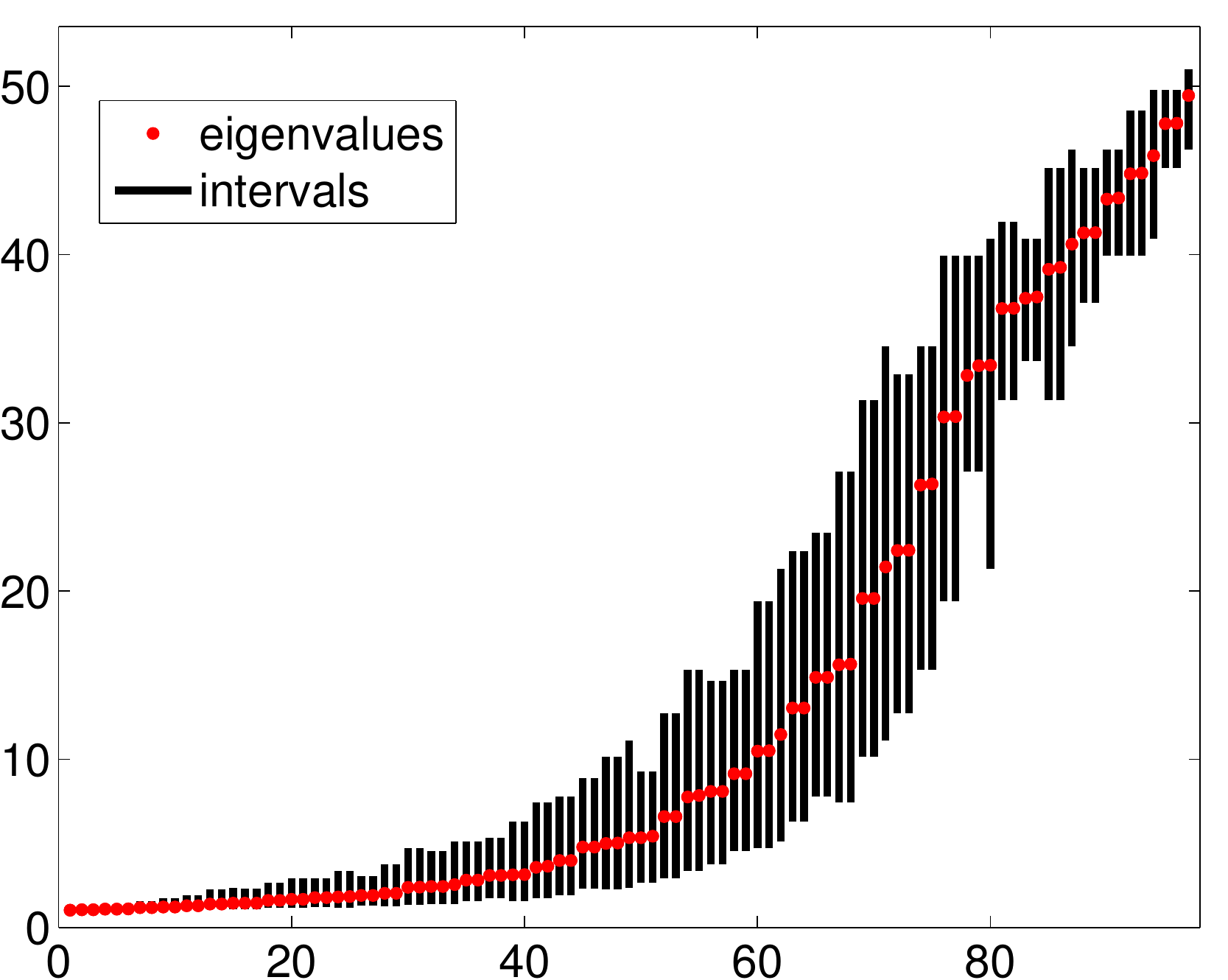}
\includegraphics[width=.495\textwidth]{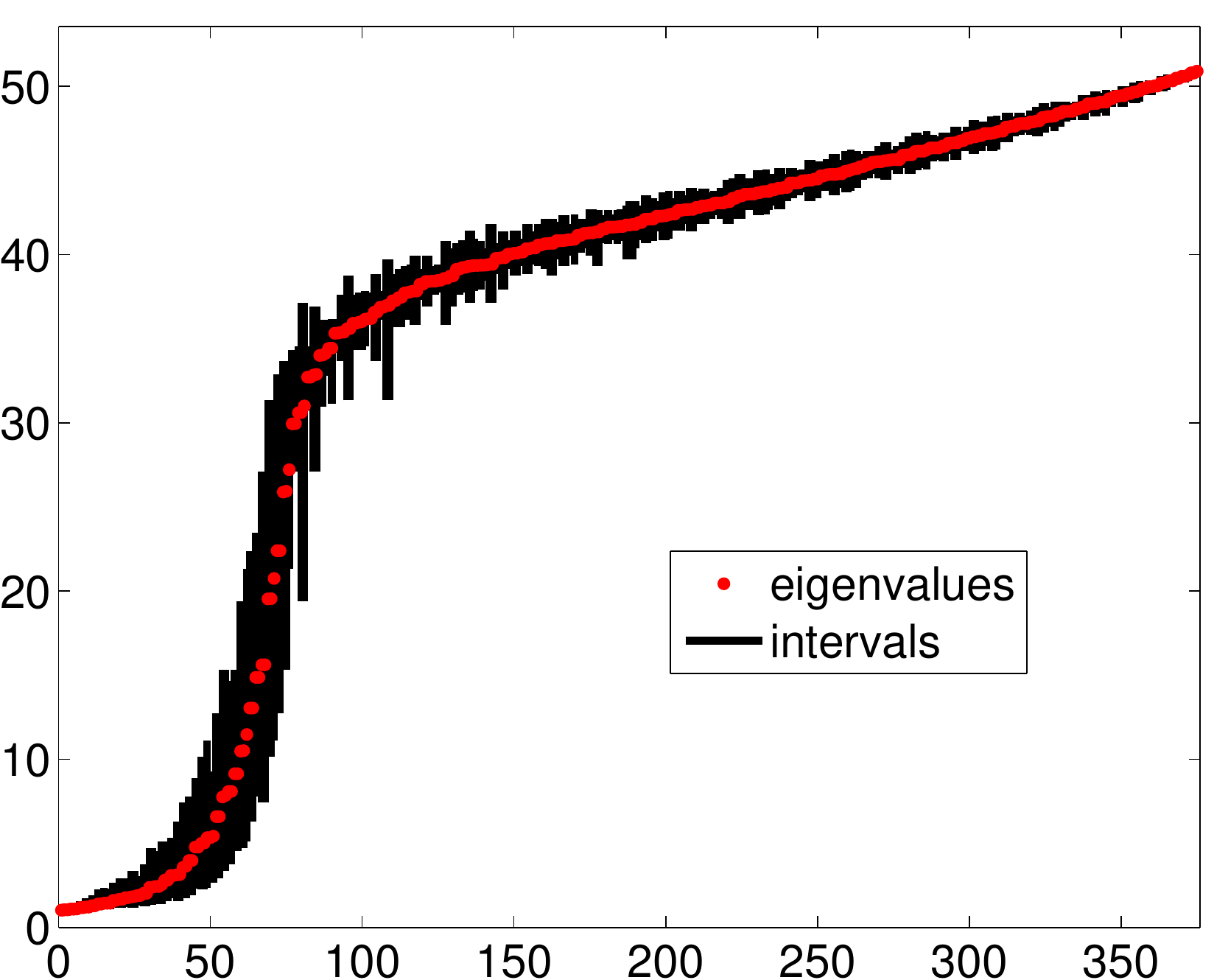}

\includegraphics[width=.495\textwidth]{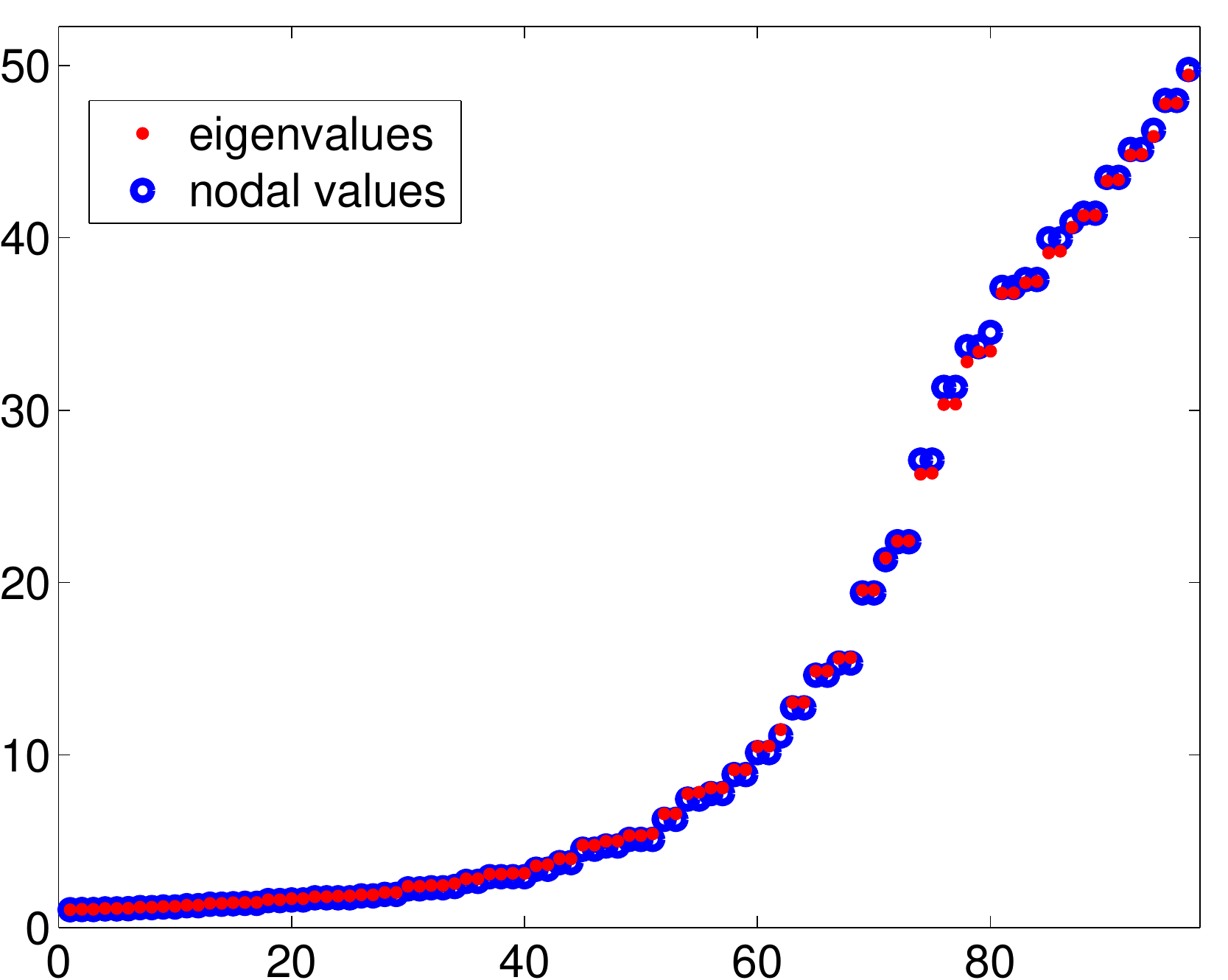}
\includegraphics[width=.495\textwidth]{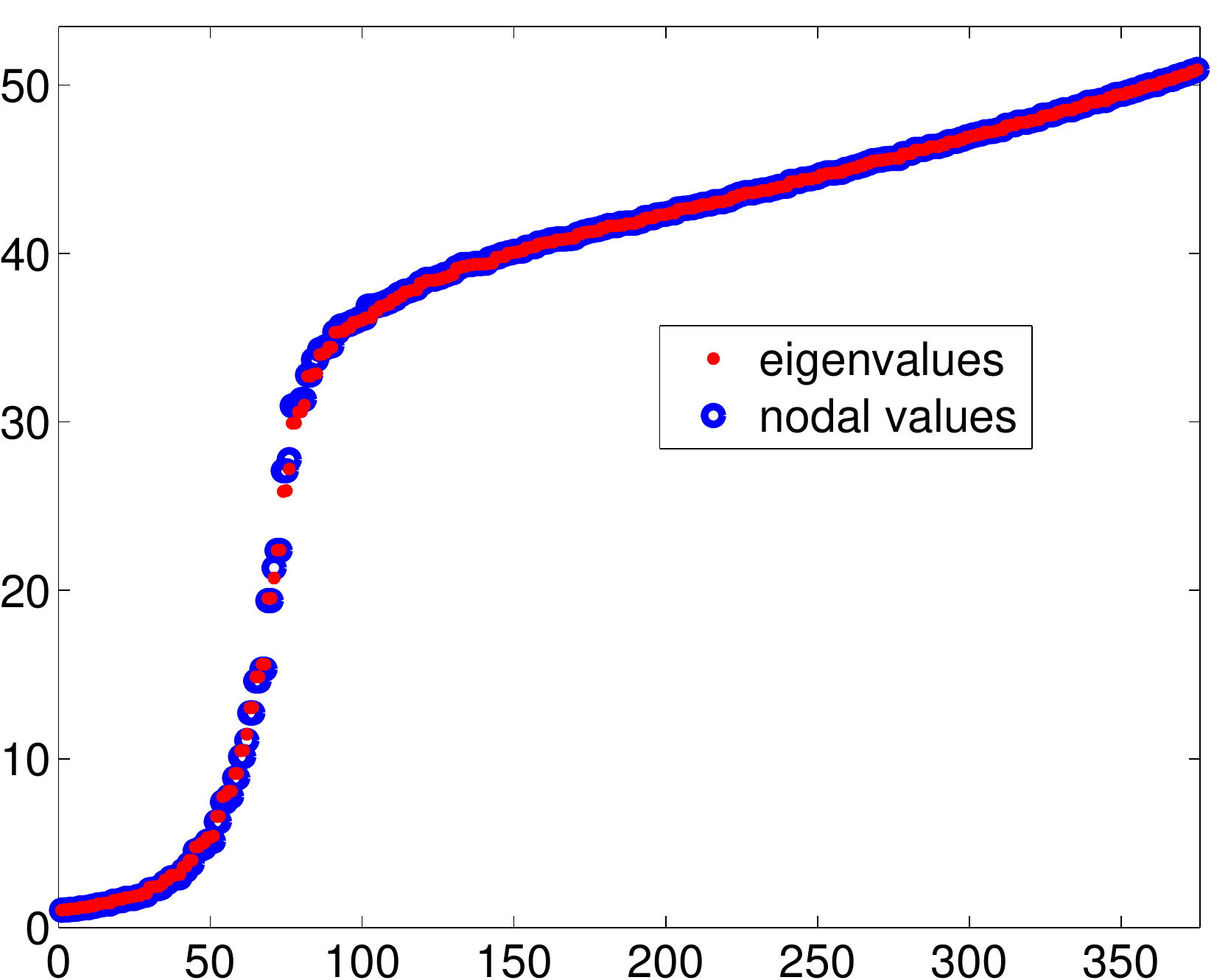}
\caption{The influence of the locally refined mesh in the subdomain $(0,0.2)\times(0,0.2)$ for the test problem (P2). Left: One refinement step. Right: Three refinement steps. We use the same symbols as in \cref{fig:figure2,fig:figure}.}
\label{fig:adapt}
\end{figure}

\subsection{Re-entrant corner domain}
\begin{figure}[h!t]
\centering
\includegraphics[width=.44\textwidth]{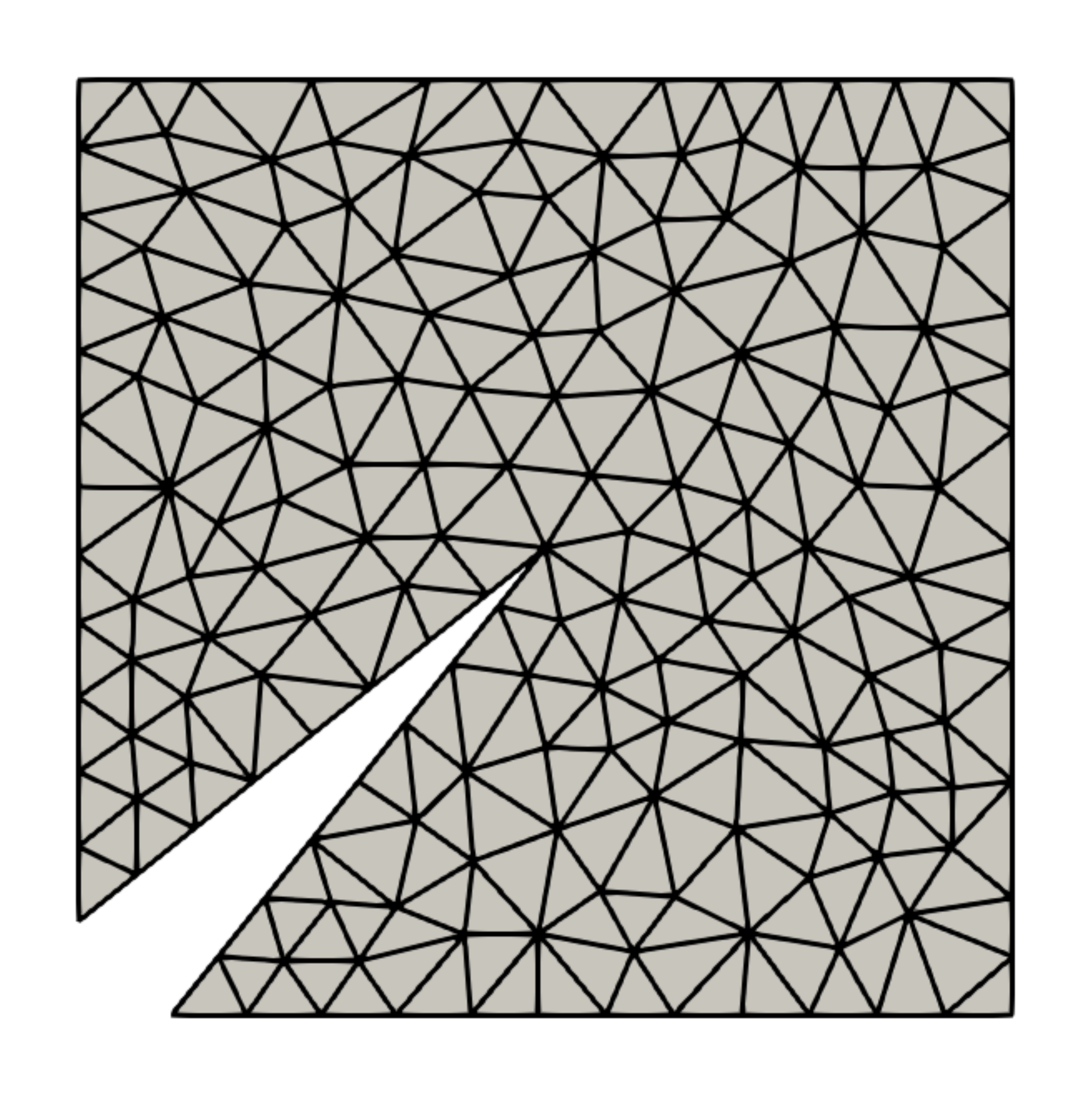}
\includegraphics[width=.55\textwidth]{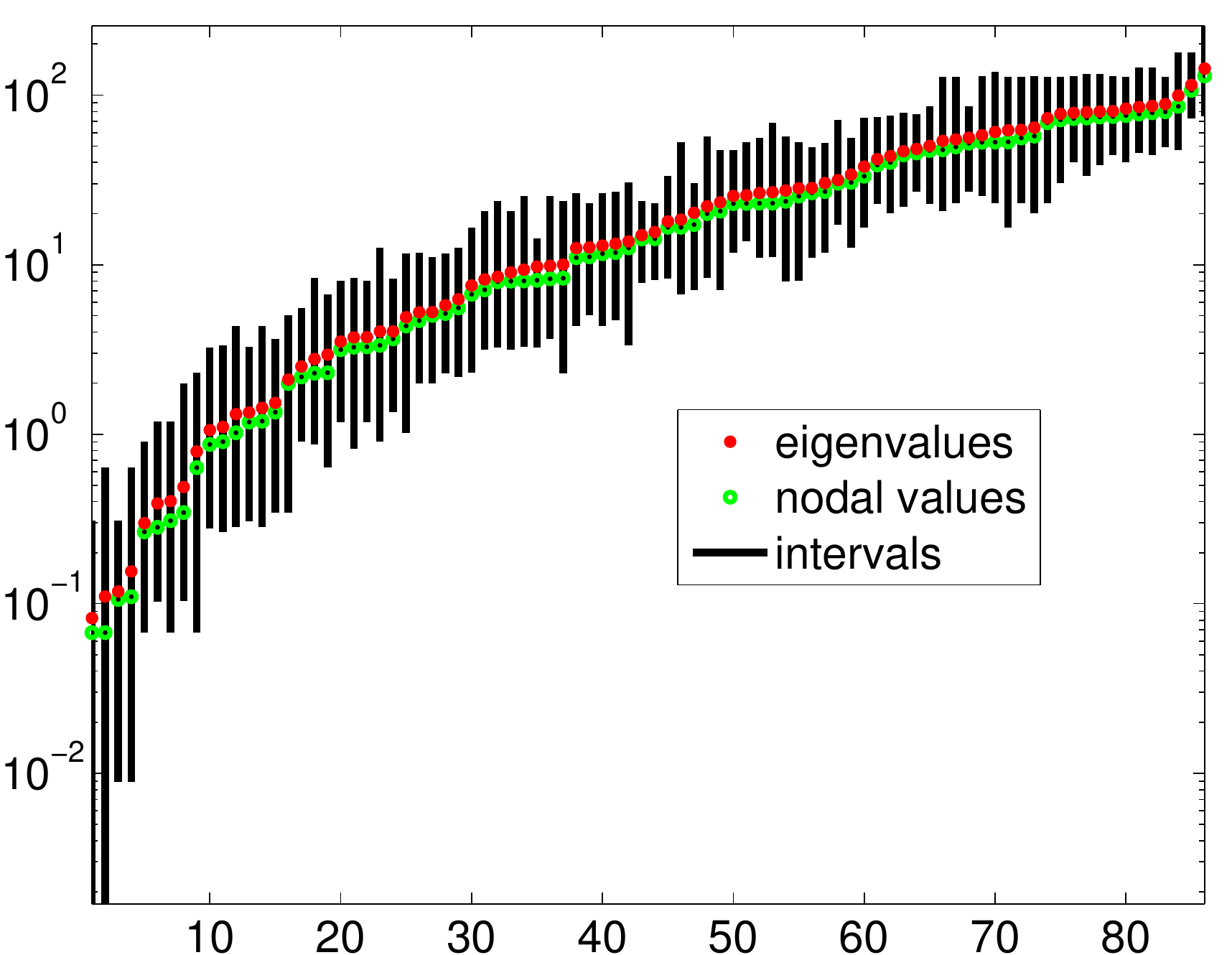}
\caption{Comparison of the eigenvalues $\lambda_s$ (red dots) with the sorted nodal values $k_{\pi^{-1}(s)}$ (green circles) and the associated intervals $k(\bc{T}_{\pi^{-1}(s)})$ (black vertical lines) for the test problem (P3) on the re-entrant corner domain.}
\label{fig:meshes}
\end{figure}
The local considerations of \cref{th:taylor} does not require additional regularity for the solutions of the associated PDEs and our  
theoretical results are valid for domains of any shape. To illuminate that no additional regularity is needed we 
conduct experiments on a domain with a re-entrant corner, i.e.,  
\begin{align*}
\Omega &= [0,1]\times[0,1]\setminus\left\{ (x,y): x > 0.8y + 0.1 \quad\&\quad y < 0.8x + 0.1\right\}.
\end{align*}
The domain is shown 
in the left panel in \cref{fig:meshes}, while  the eigenvalues $\lambda_s$ (red dots) with the sorted nodal values $k_{\pi^{-1}(s)}$ (green circles) and the associated intervals $k(\bc{T}_{\pi^{-1}(s)})$ (black vertical lines) for test problem (P3) are shown in the right panel.

\subsection{Convergence of the introductory  example explained}
We will now finish our exposition by returning back to the {motivation} example presented in \cref{notation} and by explaining the difference in the behavior of PCG with the Laplace operator preconditioning and with the ICHOL preconditioning; see the right part of \cref{fig:inhomogeneousIIsolution}.
\begin{figure}[h!t]
\centering
\includegraphics[width=.7\textwidth]{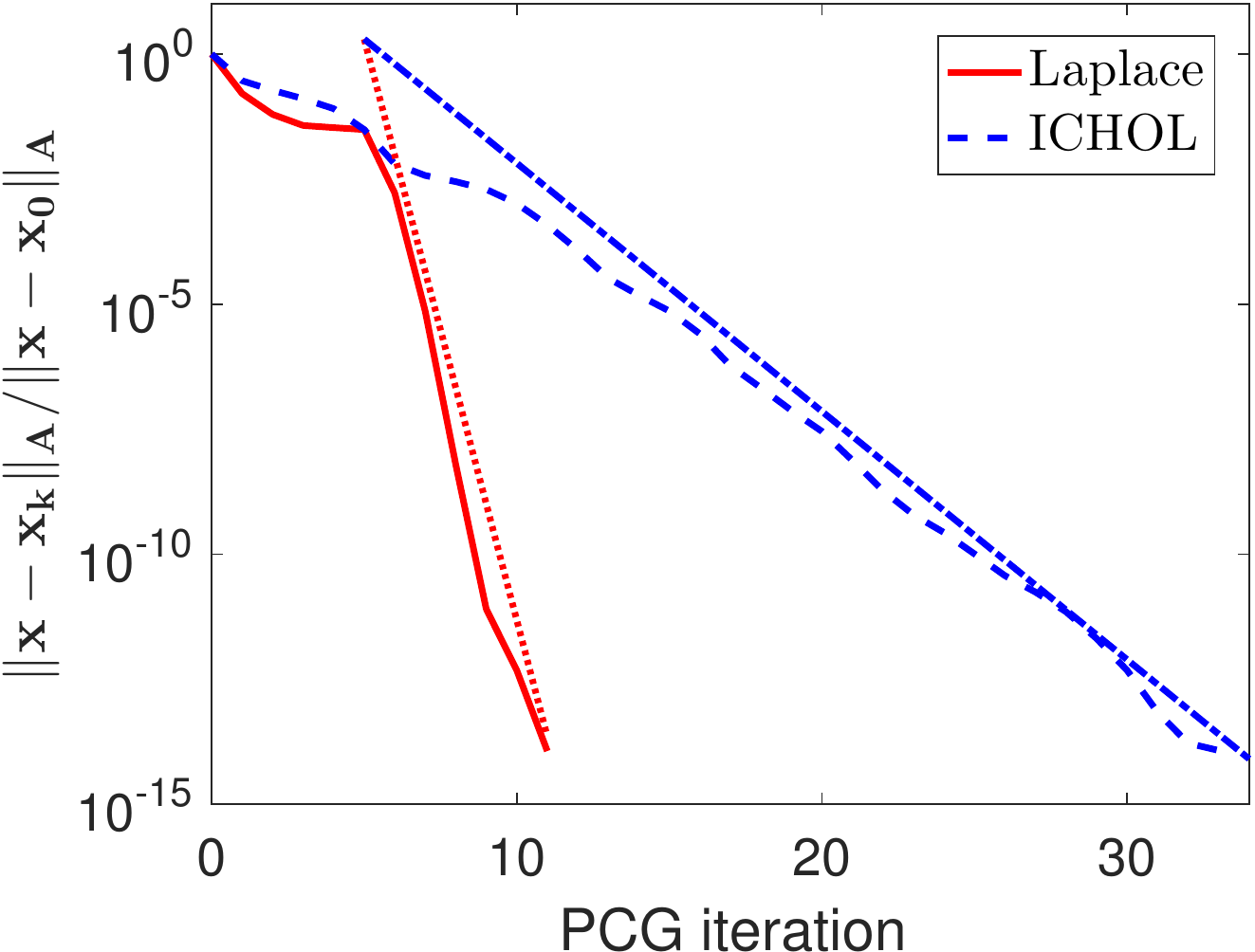}
\caption{Explanation of the PCG behavior from Figure 1. The dotted and dash-dotted lines show the estimates of the PCG error based on the so-called effective condition number, which {\em here} (see the discussion in the text) fully describes the PCG behavior starting from the sixth iteration.}
\label{fig:convergence}
\end{figure}
First we present \cref{fig:convergence}, a modification of \cref{fig:inhomogeneousIIsolution}, showing that at the fifth iteration we can identify with a remarkable accuracy the slope of the PCG convergence curves for most of the subsequent iterations, with the convergence being almost linear without a substantial acceleration. The rate of convergence is for the Laplace operator preconditioning remarkably faster than for the ICHOL preconditioning.

\begin{table}[h!t]
\centering
\caption{Detail of the points of increase (Ritz values) and the weights (see \cref{eq:weights}) of the distribution function $\omega^{\alg{L}}(\lambda)$ associated with the problem preconditioned by the Laplace operator. The effective condition number is for the given example determined by $\lambda_{1926}^{\alg{L}}$ and $\lambda_{2039}^{\alg{L}}$; see the top part of \cref{fig:diss_eff}.}
\label{tab:dis}
\small
\begin{tabular}{|c|c|c|c|c|c|c|c|c|c|}
\hline\hline
Index & {1 -- 1922} & 1923 & 1924& 1925& 1926\\
Eigenvalues & {$1$} & $28.508$ & $61.384$ & $75.324$ & \textcolor{red}{$\lambda_{1926}^{\alg{L}} = 79.699$}\\
Total weight & {$9\times10^{-6}$}& $\approx10^{-3}$&$\approx10^{-3}$&$\approx10^{-3}$&$\approx10^{-3}$ \\\hline\hline
Index & {1927 -- 1930} & {1931 -- 2039}&  \multicolumn{2}{c|}{2040 -- 2047} & {2048 -- 3969}\\
Eigenvalues &
{$80.875$ -- $81.222$} & {\textcolor{red}{$\lambda_{2039}^{\alg{L}} = 81.224$}} &\multicolumn{2}{c|}{$81.226$ -- $133.94$} & {$161.45$}\\
Total weight & {$\approx10^{-3}$} & {$1.8\times 10^{-2}$}& \multicolumn{2}{c|}{$8\times10^{-10}$}& {$0.96$}  \\\hline\hline
%
\end{tabular}
\end{table}
{The convergence of the PCG method} with the Laplace operator preconditioning can be completely explained using \cref{th:theorem} and the results about the CG convergence behavior from {the} literature. Since $k(x)$ is in the given experiment constant for most of the supports of the basis functions (being equal to one respectively to $161.45$), according to \cref{th:theorem} the preconditioned system matrix must have many multiple eigenvalues equal to one respectively to $161.45$. This is  illustrated by the computed quantities presented in \cref{tab:dis}.
We see that $1922$ eigenvalues are equal to one, {$1922$} are equal to $161.45$ and the rest is spread between $\approx28$ and $\approx134$ (with the eigenvalues between {$81.226$} and $134$ of so negligible weight (see \cref{eq:weights}) that they do not contribute within the small number of iterations to the computations; they are for CG computations within the given number of iterations practically not visible; see \cite[Section 5.6.4]{LSB13}).

\begin{figure}
\centering
\includegraphics[width=0.9\linewidth]{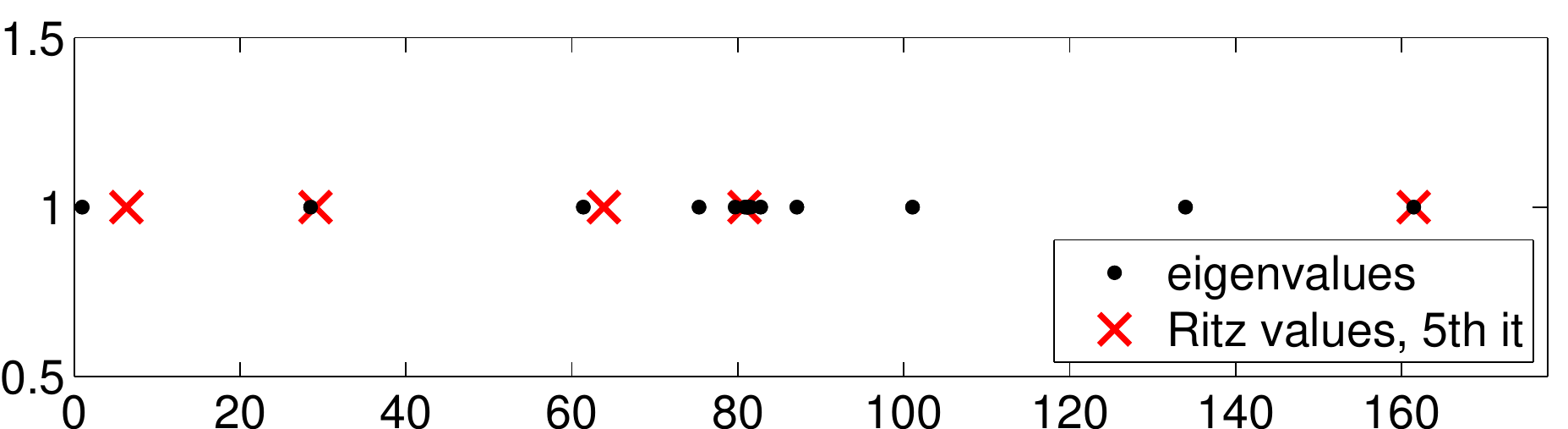}
\vspace{.2em}

\includegraphics[width=0.9\linewidth]{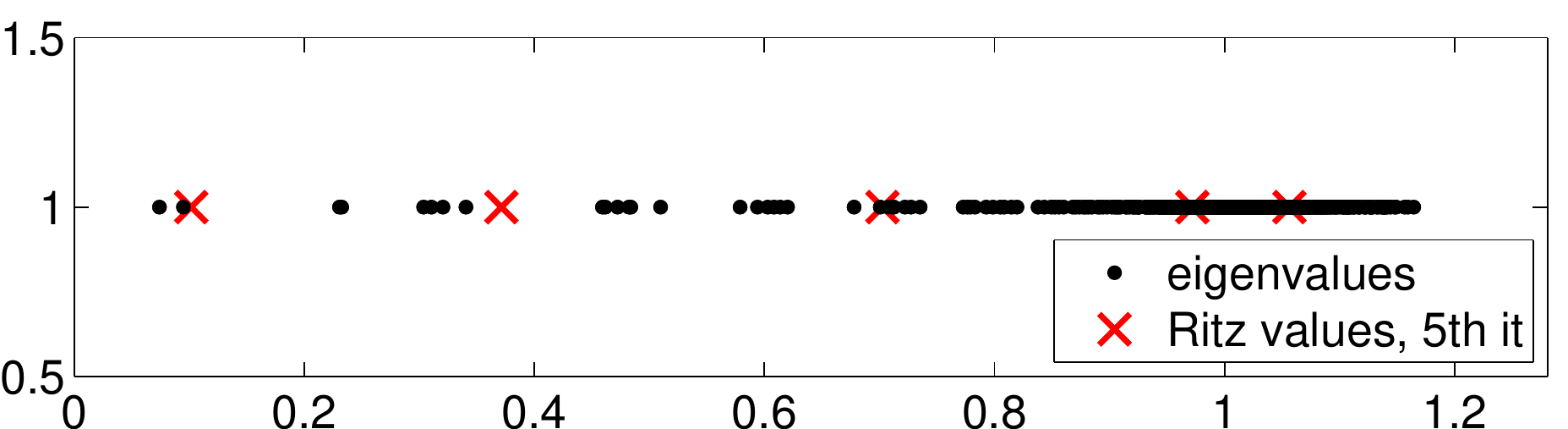}
\caption{Illustration of the Ritz values computed at the fifth PCG iteration. Top: Problem with the Laplace operator preconditioning. We observe four Ritz values approximating the eigenvalues at the lower end of the spectrum and one Ritz value very closely approximating the largest eigenvalue. Bottom: Problem with the ICHOL preconditioning. We do not observe yet a good approximation of any of the eigenvalues, but we can see that the extremal Ritz values approach the ends of the spectral interval.}
\label{fig:ritz}
\end{figure} 
Assuming exact arithmetic, van der Sluis and van der Vorst prove in the seminal paper \cite{SluVor86} that, if the Ritz values approximate (in a rather moderate way) the eigenvalues at the {\em lower end of the spectrum}, the computations further proceed with a rate as if the approximated eigenvalues are not present. Analysis of rounding errors in CG and Lanczos by Paige, Greenbaum and others, mentioned above in \cref{introduction}, then proves that this argumentation concerning the lower end of the spectrum remains valid also in finite precision arithmetic computations. At the fifth iteration the eigenvalues $1$, $28.5$, $61.4$, $75.3$ at the lower end  of the spectrum and also the largest eigenvalue $161.45$ are approximated by the Ritz values; see \cref{fig:ritz}.
%
Therefore, from then on PCG converges, using the effective condition number upper bound
\begin{equation}
\frac{\|\alg{x}-\alg{x}_k\|_{\alg{A}}}{\|\alg{x}-\alg{x}_0\|_{\alg{A}}}\leq2\left(\frac{\sqrt{\kappa_{e}^{\alg{L}}}-1}{\sqrt{\kappa_{e}^{\alg{L}}}+1}\right)^{k-5},\ 
\kappa_{e}^{\alg{L}} = \frac{\lambda^{\alg{L}}_{2039}}{\lambda^{\alg{L}}_{1926}} = 1.02,\quad k > 5,
\label{eq:eff_cond}
\end{equation}
at least as fast as the right hand side in \cref{eq:eff_cond} suggests. The convergence is in the iterations $6$--$9$ very fast and therefore we do not practically observe any further acceleration. At iteration $10$, the convergence slows down. This is due to the effect of rounding errors that cause forming a second Ritz value that approximates the largest eigenvalue $161.45$ (as mentioned above, the appearance of large outlying eigenvalues {can cause} deterioration of convergence due to roundoff; the detailed explanation is given, e.g., in \cite{1992GreenStra}, \cite[Section 5.9.1; see in particular, Figures 5.14 and 5.15]{LSB13} and~in~\cite{GerStr14}).

\begin{figure}
\centering
\includegraphics[width=.9\linewidth]{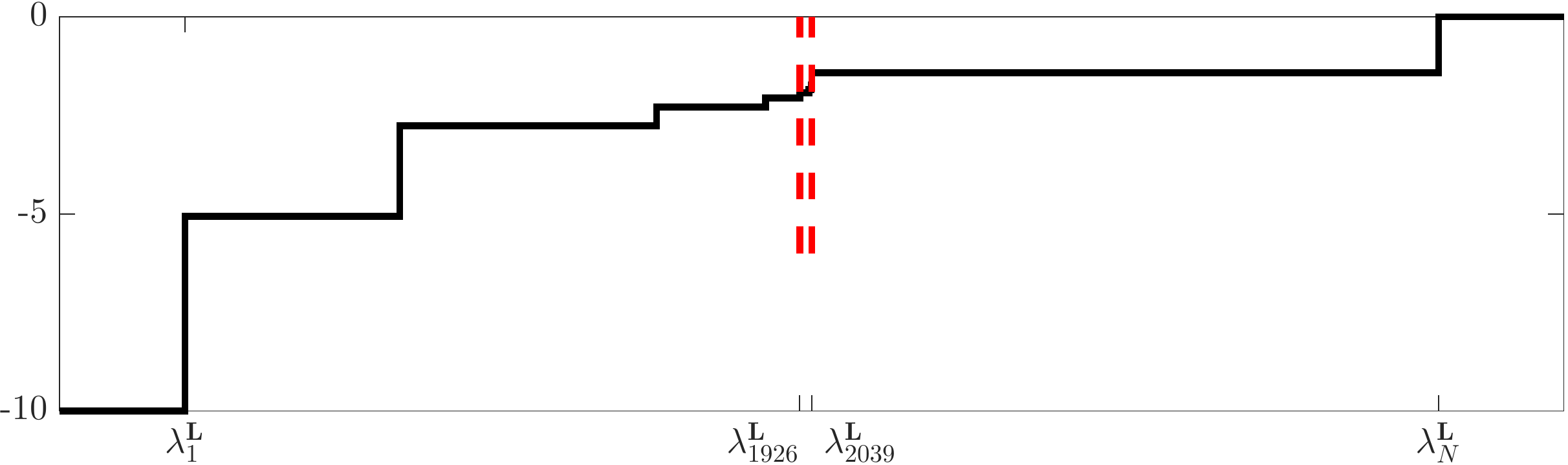}

\includegraphics[width=.9\linewidth]{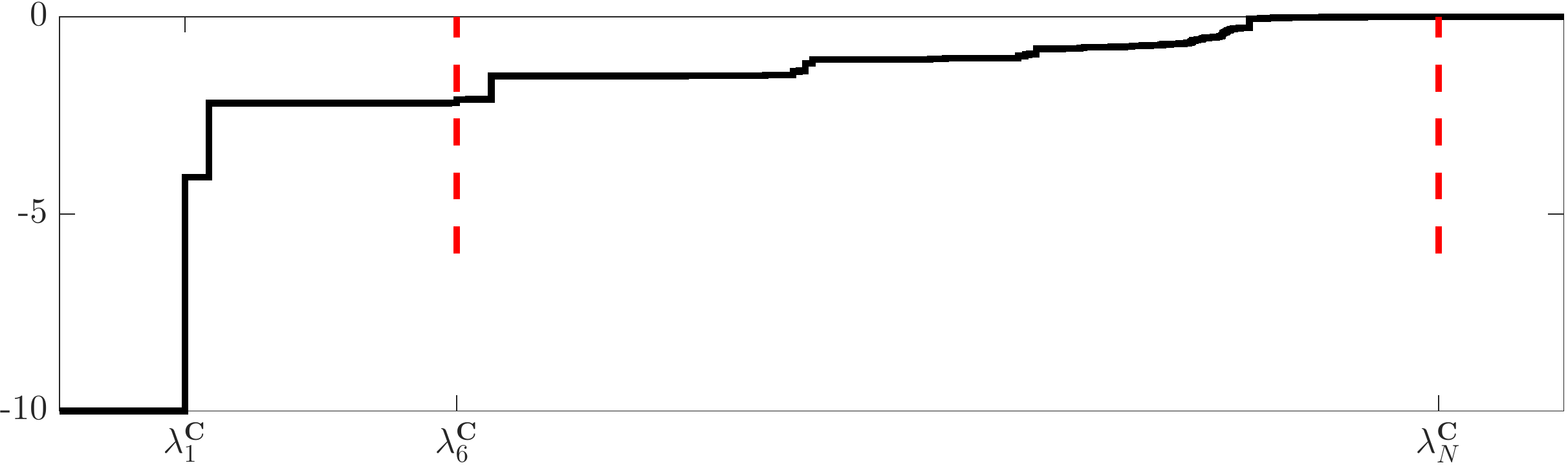}

\caption{Distribution functions: Top: Laplace operator preconditioning. Bottom: ICHOL preconditioning. Red dashed lines represent the position of eigenvalues associated with the effective condition numbers after five iterations.
}
\label{fig:diss_eff}
\end{figure}
\begin{table}[h!t]
\centering
\caption{Detail of the points of increase (Ritz values) and the weights (see \cref{eq:weights}) of the distribution function $\omega^{\alg{C}}(\lambda)$ associated with the problem with ICHOL preconditioning. The effective condition number is for the given example determined by $\lambda_{6}^{\alg{C}}$ and $\lambda_{3969}^{\alg{C}}$; see the bottom part of \cref{fig:diss_eff}.}
\label{tab:ichol}
\small
\begin{tabular}{|c|c|c|c|c|c|c|c|c|c|}
\hline\hline
Index & {1} & 2 & 3 & 4  \\
Eigenvalues & {$0.074$} & $0.095$ & $0.231$ & $0.233$ \\
Total weight & {$8\times10^{-5}$}& $6.4\times10^{-3}$&$8\times10^{-7}$&$10^{-8}$ \\\hline\hline
Index & 5 & 6 &   \multicolumn{2}{c|}{7 -- 3969}\\
Eigenvalues & $0.304$&
\textcolor{red}{$\lambda_{6}^{\alg{C}} = 0.311$}  &\multicolumn{2}{c|}{$0.321$ -- \textcolor{red}{$ \lambda_{3969}^{\alg{C}} = 1.1643$}} \\
Total weight &$6\times10^{-5}$ & $1.5\times10^{-3}$ & \multicolumn{2}{c|}{$0.992$}  \\\hline
\hline
\end{tabular}
\end{table}
Also for the incomplete Choleski preconditioning an analogous argumentation holds with the difference that the approximation of the five leftmost eigenvalues by the Ritz values slightly accelerate convergence. The bound \cref{eq:eff_cond} is valid with replacing~$\kappa_{e}^{\alg{L}}$~by
\[\kappa_{e}^{\alg{C}} = \frac{\lambda^{\alg{C}}_{3969}}{\lambda^{\alg{C}}_6} = 3.75;\]
see the computed quantities in \cref{tab:ichol}.
We can see from \cref{fig:ritz} that at the fifth iteration the five smallest eigenvalues are not yet approximated by the Ritz values. This needs about five additional iterations.
From the tenth iteration the convergence remains  very close to linear and slow because no
further acceleration can take place due to the widespread eigenvalues and the effects of roundoff (no further eigenvalue approximation can significantly affect the convergence behavior). The part of the spectra that practically determine the convergence rates after the fifth iteration of the Laplace operator PCG, respectively, after the tenth iteration of the ICHOL PCG are illustrated in \cref{fig:diss_eff}.

\section{Concluding remarks}
\label{remarks}
We have analyzed the operator $\mathcal{L}^{-1} \mathcal{A}$ generated by preconditioning second order elliptic PDEs with the inverse of the Laplacian. Previously, it has been proven that the range of the coefficient function $k$ of the elliptic PDE is contained in the spectrum of  $\mathcal{L}^{-1} \mathcal{A}$, but only for operators defined on infinitely dimensional spaces.
In this paper we show that a substantially stronger result holds in the discrete case of conforming finite
elements. More precisely, that the eigenvalues of the matrix $\alg{L}^{-1} \alg{A}$, where  $\alg{L}$ and $\alg{A}$ are FE-matrices, 
lie in resolution dependent intervals around
the nodal values of the coefficient function that tend to the nodal values as the resolution increases. 
Moreover, there is a pairing (possibly non-unique) of the eigenvalues and the nodal values 
of the coefficient function due to Hall's theory of bipartite graphs. 
Finally, we demonstrate that the conjugate gradient method utilize the structure of the spectrum (more precisely, of the associated distribution function)
to accelerate the iterations. In fact, even though the condition number involved, for instance,
with incomplete Choleski preconditioning is significantly smaller than for the Laplacian preconditioner, 
the performance when using Choleski is much worse. In this case, the accelerated performance of 
the Laplacian preconditioner can be fully explained by an analysis of the distribution functions.  


\section*{Acknowledgments}
The authors are grateful to Marie Kub{\' i}nov{\' a} for pointing out to us \cref{th:hall} which greatly simplifies the proof of \cref{th:theorem} {and to Jan Pape{\v z} for his early experiments and discussion concerning the motivating example}.

\bibliographystyle{siamplain}
\bibliography{references}
\end{document}